\numberwithin{equation}{section}
\newtheorem{thm}{Theorem}
\newtheorem{lem}{Lemma}
\newtheorem{cor}{Corollary}
\newtheorem{prop}{Proposition}
\newtheorem{defn}{Definition}
\newtheorem{rem}{Remark}
\newtheorem*{notation}{Notation}
\def\a{\alpha}
\def\s{\sigma}
\def\leq{\leqslant}
\def\Z{{\mathbb Z}}
\begin{document}

\title[The Yokonuma-Hecke algebras and the HOMFLYPT polynomial]
  {The Yokonuma-Hecke algebras and the HOMFLYPT polynomial}
\author{Maria Chlouveraki}
\address{Laboratoire de Math\'ematiques UVSQ, B\^atiment Fermat, 45 avenue des \'Etats-Unis,  78035 Versailles cedex, France.}
\email{maria.chlouveraki@uvsq.fr}

\author{Sofia Lambropoulou}
\address{Department of Mathematics,
National Technical University of Athens,
Zografou campus,
{GR-15780} Athens, Greece.}
\email{sofia@math.ntua.gr}

\subjclass[2010]{57M27, 57M25, 20F36, 20C08}

\thanks{The research project is implemented within the framework of the Action ``Supporting Postdoctoral Researchers'' of the Operational Program
``Education and Lifelong Learning'' (Action's Beneficiary: General Secretariat for Research and Technology), and is co-financed by the European Social Fund (ESF) and the Greek State. The first author would also like to thank Guillaume Pouchin for fruitful conversations and comments.}

\keywords{Yokonuma-Hecke algebras, Markov trace, HOMFLYPT polynomial, E-system}

\begin{abstract}
We compare the invariants for classical knots and links defined
using the Juyumaya trace on the Yokonuma-Hecke algebras with  the HOMFLYPT polynomial. We show that these invariants do not coincide with the HOMFLYPT except in a few trivial cases.
\end{abstract}

\maketitle

\section*{Introduction}

The Yokonuma-Hecke  algebras ${\rm Y}_{d,n}(u)$ were introduced by Yokonuma \cite{yo} in the context of Chevalley groups, as generalizations of the Iwahori-Hecke algebras. The algebras ${\rm Y}_{d,n}(u)$ may be also viewed as quotients of the framed braid group algebra over a quadratic relation (see (\ref{quadr})) involving the framing generators by means of certain weighted idempotents $e_{i}$. Thus the classical braid groups are also represented in the algebras ${\rm Y}_{d,n}(u)$.

In \cite{ju} Juyumaya constructed a unique linear Markov trace ${\rm tr}$ on the algebras ${\rm Y}_{d,n}(u)$, depending on $d$ parameters, $z, x_1, \ldots ,x_{d-1}$. The trace ${\rm tr}$ was used subsequently in \cite{jula2} for defining isotopy invariants for framed knots. As it turned out, the trace ${\rm tr}$ would not re-scale directly according to the braid equivalence moves. Therefore, certain conditions had to be imposed, implying that the trace parameters $x_1, \ldots ,x_{d-1}$ had to satisfy a non linear system of equations, the so-called E-\emph{system} (see  (\ref{Esystem})). G\'{e}rardin proved that the solutions of the E-system are parametrized by the non-empty subsets $S$ of ${\mathbb Z}/d{\mathbb Z}$ (see Appendix of  \cite{jula2}). Given now any solution of the E-system, 2-variable isotopy invariants for framed, classical and singular knots  were constructed respectively in \cite{jula2,jula3,jula4}.

For classical knots we have the well-known HOMFLYPT or 2-variable Jones polynomial $P$  \cite{jo}, which is determined by the Ocneanu trace (with parameter $\zeta$) on the Iwahori-Hecke algebras $\mathcal{H}_n(q)$ of type $A$. Therefore, it is natural to ask how the invariant $P$ compares with every invariant $\Delta_S$ derived from the Juyumaya trace on the algebras ${\rm Y}_{d,n}(u)$, for any $d \in \mathbb{N}$ and for any non-empty subset $S$ of ${\mathbb Z}/d{\mathbb Z}$. Computational data so far do not indicate that one invariant is topologically stronger than the other (see \cite{CJJKL}).

In order to compare the knot invariants $P$ and $\Delta_S$, we would like to be able to specialize the indeterminates $x_1, \ldots, x_{d-1}$ to a solution of the {\rm E}-system as early as possible during the construction of $\Delta_S$. This goal is achieved in Section 3 with the construction of the linear map $\varphi$ on ${\rm Y}_{d,n}(u)$.
However,  as we show, there is no appropriate algebra homomorphism between ${\rm Y}_{d,n}(u)$ and $\mathcal{H}_n(q)$, unless ${E:={\rm tr}(e_i)=1}$, and this
 makes a connection between the corresponding trace functions impossible. In this paper we prove that the invariants $P$ and $\Delta_S$  do not coincide except in a few trivial cases, that is, $u=1$ or $q=1$ or $E=1$, given by Theorem~\ref{mainthm}. In fact we show (Theorem~\ref{notscalar}) that these are the only cases where the one invariant is a scalar multiple of the other, with scalars in $\mathbb{C}(q,\zeta,u,z,E)$.

The paper is organized as follows: In the first two sections we present some preliminary results on Iwahori-Hecke algebras and Yokonuma-Hecke algebras. In Section~3 we introduce the {\it specialized Juyumaya trace}, where the indeterminates $x_1, \ldots, x_{d-1}$ specialize to complex numbers,  and we show that it factors through the linear map $\varphi$ that we construct. We compare the specialized Juyumaya trace with the Ocneanu trace  and we obtain one case  (when $E=1$) where the invariants $P$ and $\Delta_S$ coincide. In the last two sections we proceed with comparing further the invariants, in order to obtain all cases where they coincide. More precisely, in Section~4 we give some necessary conditions for  $P$ and $\Delta_S$ to coincide, by evaluating the invariants on specific braid words.
Finally, in Section~5 we prove, with the use of an elaborate induction, that these conditions are also sufficient.

\section{The 2-variable Jones or HOMFLYPT polynomial}

\subsection{\it The symmetric group} The symmetric group $\mathfrak{S}_n$ is generated by the transpositions $s_1,s_2, \ldots,$ $s_{n-1}$, with $s_i=(i, i+1)$, subject to the relations:
\begin{equation}\label{symgroup}
\begin{array}{rclcl}
  s_is_j & = & s_js_i && \mbox{for $\vert i-j\vert > 1$}\,;\\
  s_{i+1}s_is_{i+1} & = & s_is_{i+1}s_i && \mbox{for all $i$}\,;\\
 s_i^2  & =  & 1 && \mbox{for all $i$}\,.
\end{array}
\end{equation}

Let $S=\{s_1,s_2,\ldots,s_{n-1}\}$ and let $w \in \mathfrak{S}_n$. Then
$w=s_{i_1}s_{i_2}\ldots s_{i_r}$, with $s_{i_j} \in S$, is an \emph{expression} for $w$. If $r$ is minimal such that there exists an expression $w=s_{i_1}s_{i_2}\ldots s_{i_r}$, then this expression is called \emph{reduced} and $r$ is called the {\em length of } $w$. We denote the length of $w$ by $\ell(w)$.

\subsection{\it Conjugacy classes of  $\mathfrak{S}_n$}\label{conj}
The conjugacy classes of $\mathfrak{S}_n$ correspond to the cycle types of permutations; that is, two elements of $\mathfrak{S}_n$ are conjugate in $\mathfrak{S}_n$ if and only if they consist of the same number of disjoint cycles of the same lengths.
It is well-known that the conjugacy classes of $\mathfrak{S}_n$ are naturally parametrized by the partitions $\mu$ of $n$. If $\mu$  has non-zero parts $\mu_1$, $\mu_2$, \ldots, then we take $w_\mu:= s_{i_k}\ldots s_{i_2}s_{i_1}$ as representative in the class labelled by $\mu$, where $\{i_1,i_2,\ldots,i_k\}$ is the set obtained from $\{1,2,\ldots,n\}$ by removing the integers $\mu_1$, $\mu_1+\mu_2$,  $\mu_1+\mu_2+\mu_3$, $\ldots$ .  For example, if $n = 8$ and $\mu = (4,3,1)$, then $\mu_1=4$, $\mu_1+\mu_2=7$ and $\mu_1+\mu_2+\mu_3=8$, whence $w_\mu = s_6s_5s_3s_2s_1$. The point about choosing these representatives is that $w_\mu$ has minimal length in its conjugacy class, that is, we have $\ell(w_\mu) \leq \ell(w)$ for any $w \in \mathfrak{S}_n$ which is conjugate to $w_\mu$. Now let
$$\mathfrak{D}:=\{s_{i_k}\ldots s_{i_2}s_{i_1} \,|\, i_1<i_2<\cdots<i_k\} \subset \mathfrak{S}_n.$$
We obviously have $w_\mu \in \mathfrak{D}$ for every partition $\mu$ of $n$. Moreover, if $w=s_{i_k}\ldots s_{i_2}s_{i_1}  \in \mathfrak{D}$, one can easily check that, because of its cycle type, $w$ has minimal length in its conjugacy class.\footnote{This result also follows from \cite[Lemma 3.1.14]{gp}, since $w$ is a Coxeter element of the parabolic subgroup $W_J$ of $\mathfrak{S}_n$, where $J=\{s_{i_1},s_{i_2},\ldots, s_{i_k}\}\subseteq S$.}

The following result, which relates elements of minimal length in a conjugacy class, will be useful in Subsection \ref{similarities1}.

\begin{thm}\cite[Theorem 3.2.9]{gp}\label{Cmin}\
Let $C$ be a conjugacy class of $\mathfrak{S}_n$ and let $w,w'$ be two elements of minimal length in $C$. Then $w$ and $w'$ are ``strongly conjugate'' in $\mathfrak{S}_n$, that is, there exists a finite sequence $w_0=w, w_1, \ldots, w_r=w'$ such that, for all $i=0,1,\ldots,r-1$,
$$\ell(w_{i})=\ell(w_{i+1}),\,\,\, w_{i+1}=x_iw_{i}x_i^{-1} \,\, \text{ and }\,\,  \ell(x_iw_i)=\ell(x_i)+\ell(w_i)\,\, \text{ or }\,\, \ell(w_ix_i^{-1})=\ell(w_i)+\ell(x_i^{-1})$$
for some elements $x_i \in \mathfrak{S}_n$.
\end{thm}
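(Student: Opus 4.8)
The plan is to build the required sequence from two kinds of \emph{legal moves} — conjugations $w_i\mapsto w_{i+1}=x_iw_ix_i^{-1}$ with $\ell(w_{i+1})=\ell(w_i)$ and with $\ell(x_iw_i)=\ell(x_i)+\ell(w_i)$ or $\ell(w_ix_i^{-1})=\ell(w_i)+\ell(x_i^{-1})$ — following the scheme of Section~3.2 of \cite{gp}, specialized to $\mathfrak{S}_n$; write $C_{\min}$ for the set of minimal-length elements of $C$. The first kind of legal move is a \emph{cyclic shift}: from a reduced expression $w=s_{i_1}\cdots s_{i_r}$ pass to $s_{i_1}ws_{i_1}=s_{i_2}\cdots s_{i_r}s_{i_1}$ (or its mirror from the right). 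Since $s_{i_1}w$ has length $r-1$, the result has length $\leq r$, so if $w\in C_{\min}$ the shift stays in $C_{\min}$ with unchanged length; and it is a legal move with $x=s_{i_1}$, since either $s_{i_1}$ is not a right descent of $w$, so $\ell(ws_{i_1})=\ell(w)+1=\ell(w)+\ell(s_{i_1}^{-1})$, or it is, and then the standard fact ``a two-sided descent $s$ of $w$ satisfies $sws=w$ or $\ell(sws)=\ell(w)-2$'' forces $s_{i_1}ws_{i_1}=w$ (minimality excludes the other case), making the move trivial. The second kind is conjugation by a \emph{general} $x$ with $\ell(xw)=\ell(x)+\ell(w)$ (or the mirror condition) and $\ell(xwx^{-1})=\ell(w)$; general conjugators are genuinely needed — for instance $s_i$ and $s_{i+1}$, both of minimal length $1$ in the class of transpositions, are \emph{not} joined by length-preserving conjugations by simple reflections (indeed $s_1$ is isolated under such conjugations), yet $x=s_is_{i+1}$ links them in one step, as $\ell(xs_i)=3=\ell(x)+\ell(s_i)$ and $xs_ix^{-1}=s_{i+1}$. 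Thus the goal reduces to: every $w\in C_{\min}$ is linked by legal moves to the distinguished representative $w_\mu$ of Subsection~\ref{conj}, where $\mu$ is the cycle type of $C$.

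I would prove this by induction on $n$, organized around the $w$-orbit of the largest point $n$. If $w(n)=n$, then $w\in\mathfrak{S}_{n-1}$ is of minimal length in its class there, and the inductive hypothesis applies, using that $w_\mu=w_{\mu'}$ when $\mu$ is obtained from a partition $\mu'$ of $n-1$ by adjoining a part $1$. If $w(n)\neq n$, then $n$ lies in a cycle $\gamma$ of length $m=\mu_j\geq2$, and the plan is to use cyclic shifts together with general conjugations to bring $w$ to a form $w''$ in which $\gamma$ is supported on the final interval $\{n-m+1,\dots,n\}$ and equals the Coxeter element $s_{n-m+1}s_{n-m+2}\cdots s_{n-1}$ there, while the remaining cycles of $w''$ live on $\{1,\dots,n-m\}$. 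Then $w''=w_1\cdot c$ with $c$ this top Coxeter element and $w_1$ of minimal length in a conjugacy class of $\mathfrak{S}_{n-m}$, the product being length-additive, so the inductive hypothesis for $w_1$ — together with the fact, proved first for a single cycle, that all Coxeter elements of $\mathfrak{S}_m$ are strongly conjugate — identifies $w$ with $w_\mu$ up to legal moves. The ``straightening'' of $\gamma$ onto $\{n-m+1,\dots,n\}$ is done by an inner induction that isolates $\gamma$ onto the top interval (for example, first bringing the points $w^{-1}(n)$ and $w(n)$ as close to $n$ as possible, then doing the same for $n-1$, and so on), each manipulation realized as a legal move of length $\ell(w)$.

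To finish, given $w,w'\in C_{\min}$ I would concatenate a chain $w\to\cdots\to w_\mu$ with the reverse of a chain $w'\to\cdots\to w_\mu$; legal moves are reversible (the left and right conditions interchange) and length-preserving, which gives the sequence $w_0=w,\dots,w_r=w'$ demanded by the theorem. The hard part is the straightening in the second paragraph: one must exploit the freedom in the conjugators $x_i$ while keeping every intermediate element in $C_{\min}$, so the cycle surgery cannot be done naively. This is precisely the delicate portion of \cite{gp}, where in full generality it requires a type-by-type analysis of finite Coxeter groups; for $\mathfrak{S}_n$, organizing the argument around cycle structure as above is the most tractable case. (The companion assertion in \cite[Theorem 3.2.9]{gp}, that every element of $C$ descends to $C_{\min}$ through length-non-increasing conjugations by simple reflections, belongs to the same circle of ideas and is typically proved en route, though it is not needed for the statement quoted here.)
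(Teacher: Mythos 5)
First, note that the paper itself offers no proof of this statement: it is quoted verbatim from \cite[Theorem 3.2.9]{gp} and used as a black box in Subsection 3.3, so there is no internal argument to compare yours against; your proposal has to stand on its own as a proof for $\mathfrak{S}_n$, and as written it does not. The preparatory material is fine (cyclic shifts are legal moves or are trivial by the descent lemma plus minimality, legal moves are reversible since $\ell(w'x)=\ell(xw)$, and your $x=s_is_{i+1}$ example correctly shows that conjugation by simple reflections alone cannot work), but the entire content of the theorem in type $A$ is concentrated in the step you explicitly defer: the ``straightening'' of the cycle containing $n$ onto the top interval by legal moves that keep every intermediate element of minimal length. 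You give no construction of the conjugators, no verification of the length-additivity condition $\ell(x_iw_i)=\ell(x_i)+\ell(w_i)$ (or its mirror) for them, and no argument that the intermediate permutations remain in $C_{\min}$ --- you say only that the manipulations are ``realized as a legal move of length $\ell(w)$'' and then concede that ``the cycle surgery cannot be done naively.'' That concession is accurate: this is exactly the delicate combinatorial core (in \cite{gp} it rests on a nontrivial case analysis, originally due to Geck--Pfeiffer for type $A$), and an outline that names it without carrying it out is not a proof.

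Two further ingredients are asserted but not established. You invoke ``the fact, proved first for a single cycle, that all Coxeter elements of $\mathfrak{S}_m$ are strongly conjugate,'' which is itself a special case of the theorem and needs its own argument. And after writing $w''=w_1\cdot c$ with $c$ the Coxeter element on $\{n-m+1,\dots,n\}$, matching this with the specific representative $w_\mu$ of Subsection 1.2 requires more than the inductive hypothesis for $w_1$: in $w_\mu$ the blocks appear in the order dictated by the partition, so the cycle through $n$ generally has length $\mu_p$ rather than your $m$, and you must show that permuting blocks (as in your $s_1\leftrightarrow s_2$ example, but for blocks of arbitrary sizes) is achievable by legal moves --- true, but again unproved. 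The overall strategy (reduce to $w_\mu$, concatenate and reverse chains, induct on $n$ via the cycle through $n$) is a reasonable skeleton for the type-$A$ proof, but as it stands the argument has a genuine gap at its center.
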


\subsection{\it The Iwahori-Hecke algebra}

Let $q \in \mathbb{C}\setminus \{0\}$.
The Iwahori-Hecke algebra $\mathcal{H}_n(q)$ of type $A$ is the $\mathbb{C}$-associative algebra with presentation on generators $G_1$, $G_2$, $\ldots$, $G_{n-1}$, and  relations:
\begin{equation}\label{hecke}
\begin{array}{ccrclcl}
\mathrm{(b}_1)& & G_iG_j & = & G_jG_i && \mbox{for $\vert i-j\vert > 1$}\,;\\
\mathrm{(b}_2)& & G_{i+1}G_iG_{i+1} & = & G_iG_{i+1}G_i && \mbox{for all $i$}\,;\\
\mathrm{(h})& & G_i^2  & =  &  (q-1)G_i+q  && \mbox{for all $i$}\,.
\end{array}
\end{equation}

The relations $({\rm b}_1)$ and $({\rm b}_2)$  are defining relations for the classical Artin braid group $B_n$; hence $\mathcal{H}_n(q)$ can be viewed as the quotient of the group algebra ${\mathbb C}B_n$ over the quadratic relations~$({\rm h})$.
Moreover, for $q=1$ the algebra  $\mathcal{H}_n(1)$ is isomorphic to the group algebra of the symmetric group $\mathbb{C}\mathfrak{S}_n$.
If $w \in \mathfrak{S}_n$ and $w=s_{i_1}s_{i_2}\ldots s_{i_r}$ is a reduced expression, we set
$G_w:=G_{i_1}G_{i_2}\ldots G_{i_r}$. The set
$$\{G_w\,|\, w \in \mathfrak{S}_n\}$$
is the ``standard'' $\mathbb{C}$-basis of  $\mathcal{H}_n(q)$.
Now, the following set forms another linear $\mathbb{C}$-basis for  $\mathcal{H}_n(q)$ \cite[\S 4]{jo}:
$$
\mathcal{S}_\mathcal{H} = \{(G_{i_1}\ldots G_{i_1-r_1})(G_{i_2}\ldots G_{i_2-r_2})\cdots (G_{i_p}\ldots G_{i_p-r_p})\,|\,
1\leq i_1 <\cdots<i_p \leq n-1\}.
$$
Note that all generators $G_i$ are invertible in  $\mathcal{H}_n(q)$, with
\begin{equation}\label{hinv}
\begin{array}{rclcc}
G_i^{-1}&=&q^{-1}G_i+ (q^{-1}-1)&& \mbox{for all $i$}\,.
\end{array}
\end{equation}

\subsection{\it  Computation formulas in the Iwahori-Hecke algebra}

Let $m \in \mathbb{N}$. It is easy to check that
$$
G_i^m = \left( q^{m-1}-q^{m-2}+ \cdots +(-1)^{m-2}q+(-1)^{m-1}\right)G_i + \left(q^{m-1}-q^{m-2}+ \cdots +(-1)^{m-2}q \right).
$$
Hence, if $m$ is even, we have
\begin{equation}\label{hpowersev}
G_i^m = \frac{q^m-1}{q+1}\,G_i + \frac{q^m-1}{q+1} +1,
\end{equation}
and if $m$ is odd, we have
\begin{equation}\label{hpowersod}
G_i^m = \frac{q^m+1}{q+1}\, G_i + \frac{q^m+1}{q+1} -1 .
\end{equation}

\subsection{\it  The Ocneanu trace $\tau$}\label{wmu1}

The natural inclusions $B_n  \subset B_{n+1}$ of the classical  braid groups give rise to  the algebra inclusions:
$$
{{\mathbb C}}B_0  \, \subset {{\mathbb C}}B_1 \subset {{\mathbb C}}B_2 \subset \ldots
$$
(setting ${{\mathbb C}}B_0:={{\mathbb C}}$), which in turn induce the following algebra inclusions:
\begin{equation}
\mathcal{H}_0(q)  \, \subset \mathcal{H}_1(q) \subset \mathcal{H}_2(q) \subset \ldots
\end{equation}
(setting $\mathcal{H}_0(q):={{\mathbb C}}$). Then we have the following.

\begin{thm}\cite[Theorem 5.1]{jo}\label{otrace} 
Let $\zeta$ be an indeterminate. There exists a unique linear Markov trace
$$
\tau :  \bigcup_{n \geq 0} \mathcal{H}_{n}(q) \longrightarrow  {\mathbb C}[\zeta]
$$
defined inductively on $\mathcal{H}_n(q)$ for all $n$, by the following rules:
$$
\begin{array}{rcll}
\tau(hh') & = &\tau(h'h)  \qquad &  \\
\tau(1) & = & 1 & \\
\tau(hG_n) & = & \zeta \, \tau(h) \qquad &  (\text{Markov  property} )
\end{array}
$$
where $h,h' \in \mathcal{H}_n(q)$.
\end{thm}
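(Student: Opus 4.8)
\textbf{Uniqueness} is the easy half. The plan is to show the three rules determine $\tau$ by a downward induction on $n$. The structural input is that, as a $\mathbb{C}$-vector space,
\[
\mathcal{H}_{n+1}(q)=\mathcal{H}_n(q)\oplus\mathcal{H}_n(q)\,G_n\,\mathcal{H}_n(q),
\]
a standard fact also visible from the basis $\mathcal{S}_\mathcal{H}$ (its elements either avoid $G_n$, hence lie in $\mathcal{H}_n(q)$, or have the form $w\,G_nG_{n-1}\cdots G_{n-r}=(w)\,G_n\,(G_{n-1}\cdots G_{n-r})$ with $w\in\mathcal{H}_n(q)$). Given this, for $x=a\,G_n\,b$ with $a,b\in\mathcal{H}_n(q)$ the trace and Markov rules force $\tau(a\,G_n\,b)=\tau(b\,a\,G_n)=\zeta\,\tau(b\,a)$ with $b\,a\in\mathcal{H}_n(q)$, while on $\mathcal{H}_n(q)$ the functional $\tau$ is handled inductively, the base case being $\mathcal{H}_0(q)=\mathbb{C}$, $\tau(c)=c$. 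Hence $\tau$, if it exists, is unique and $\mathbb{C}[\zeta]$-valued.

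For \textbf{existence} I would construct a compatible family of linear maps $\tau_n\colon\mathcal{H}_n(q)\to\mathbb{C}[\zeta]$ by induction on $n$, beginning with $\tau_0=\mathrm{id}$, carrying along the inductive hypothesis that each $\tau_n$ is cyclic and satisfies the Markov rule. Given $\tau_n$, I would set $\tau_{n+1}|_{\mathcal{H}_n(q)}=\tau_n$ and $\tau_{n+1}(a\,G_n\,b)=\zeta\,\tau_n(b\,a)$ for $a,b\in\mathcal{H}_n(q)$, extended $\mathbb{C}$-linearly via the direct sum above. The first thing to verify is that this is well defined: since $G_n$ commutes with $G_1,\dots,G_{n-2}$, the assignment $a\otimes b\mapsto a\,G_n\,b$ is — as is standard, by a dimension count, or from $\mathcal{S}_\mathcal{H}$ together with the freeness of $\mathcal{H}_n(q)$ over $\mathcal{H}_{n-1}(q)$ — an isomorphism $\mathcal{H}_n(q)\otimes_{\mathcal{H}_{n-1}(q)}\mathcal{H}_n(q)\xrightarrow{\ \sim\ }\mathcal{H}_n(q)\,G_n\,\mathcal{H}_n(q)$, and $a\otimes b\mapsto\zeta\,\tau_n(b\,a)$ factors through this tensor product because $\tau_n(b\,a\,c)=\tau_n(c\,b\,a)$ for $c\in\mathcal{H}_{n-1}(q)$ by cyclicity of $\tau_n$. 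By construction $\tau_{n+1}$ restricts to $\tau_n$ and satisfies $\tau_{n+1}(h\,G_n)=\zeta\,\tau_n(h)$ for $h\in\mathcal{H}_n(q)$; so, once cyclicity of every $\tau_{n+1}$ is established, the $\tau_n$ glue to a linear $\tau$ on $\bigcup_{n\geq0}\mathcal{H}_n(q)$ with $\tau(1)=1$, $\tau(hh')=\tau(h'h)$ and $\tau(hG_n)=\zeta\,\tau(h)$, and uniqueness was shown above.

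Proving that each $\tau_{n+1}$ is cyclic is the heart of the matter, and I expect it to be the main obstacle. I would first reduce $\tau_{n+1}(xy)=\tau_{n+1}(yx)$, by the standard induction on the number of generators occurring in $x$ (if $x=G_i\,x'$, then $\tau_{n+1}(xy)=\tau_{n+1}((x'y)\,G_i)=\tau_{n+1}(x'\,(y\,G_i))=\tau_{n+1}(yx)$, the second equality being the identity below and the last the inductive hypothesis for $x'$, using that the $G_i$ generate), to the identities $\tau_{n+1}(G_i\,y)=\tau_{n+1}(y\,G_i)$ for $1\leq i\leq n$ and $y\in\mathcal{H}_{n+1}(q)$; by linearity one may take $y=c$ or $y=a\,G_n\,b$ with $a,b,c\in\mathcal{H}_n(q)$. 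Three of the four resulting cases are routine: $y=c$ with $i\leq n-1$ (cyclicity of $\tau_n$); $y=c$ with $i=n$ (both sides equal $\zeta\,\tau_n(c)$, reading $G_n\,c=1\cdot G_n\cdot c$ and $c\,G_n=c\cdot G_n\cdot1$); and $y=a\,G_n\,b$ with $i\leq n-1$ (both sides equal $\zeta\,\tau_n(b\,G_i\,a)$, since $G_i\,a$ and $b\,G_i$ lie in $\mathcal{H}_n(q)$). The only substantive case is $y=a\,G_n\,b$ with $i=n$: here I would re-expand $G_n\,a\,G_n\,b$ and $a\,G_n\,b\,G_n$ in $\mathcal{H}_n(q)\oplus\mathcal{H}_n(q)G_n\mathcal{H}_n(q)$, by writing $a$ and $b$ through the analogous one-level-down decomposition $\mathcal{H}_n(q)=\mathcal{H}_{n-1}(q)\oplus\mathcal{H}_{n-1}(q)G_{n-1}\mathcal{H}_{n-1}(q)$, pushing the inner occurrences of $G_n$ past the lower generators via $G_nG_j=G_jG_n$ ($j\leq n-2$), collapsing $G_nG_{n-1}G_n$ to $G_{n-1}G_nG_{n-1}$ by the braid relation, and using $G_n^2=(q-1)G_n+q$; then, applying $\tau_{n+1}$ through the rules above, I would check that the two resulting elements of $\mathbb{C}[\zeta]$ coincide by repeated use of the cyclicity and the Markov rule for $\tau_n$. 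This last verification — which is precisely the statement that the prescription for $\tau_{n+1}$ is compatible with the relations $G_n^2=(q-1)G_n+q$ and $G_nG_{n-1}G_n=G_{n-1}G_nG_{n-1}$ governing the new generator — is the one genuine difficulty; everything else is formal bookkeeping.
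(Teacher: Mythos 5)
This theorem is background for the paper: it is quoted from Jones \cite{jo} and the paper gives no proof of it, so there is nothing internal to compare against; I can only assess your argument on its own terms, and it is correct in outline. Uniqueness via the vector-space decomposition $\mathcal{H}_{n+1}(q)=\mathcal{H}_n(q)\oplus\mathcal{H}_n(q)\,G_n\,\mathcal{H}_n(q)$ is fine, and your existence construction $\tau_{n+1}(a\,G_n\,b)=\zeta\,\tau_n(ba)$, made well defined through the isomorphism $\mathcal{H}_n(q)\otimes_{\mathcal{H}_{n-1}(q)}\mathcal{H}_n(q)\cong\mathcal{H}_n(q)\,G_n\,\mathcal{H}_n(q)$ and the $\mathcal{H}_{n-1}(q)$-balancedness supplied by cyclicity of $\tau_n$, is the standard inductive construction of the Ocneanu trace. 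You correctly isolate the only substantive point, namely $\tau_{n+1}(G_n\,a\,G_n\,b)=\tau_{n+1}(a\,G_n\,b\,G_n)$, and the route you indicate does close it — in fact more cleanly than your description suggests: in the mixed case $a=a_1G_{n-1}a_2$, $b=b_1G_{n-1}b_2$ with $a_i,b_i\in\mathcal{H}_{n-1}(q)$, commuting $G_n$ past $\mathcal{H}_{n-1}(q)$ and applying the braid relation turns the two sides into $\zeta\,\tau_n\bigl(G_{n-1}^2\,A\,G_{n-1}\,B\bigr)$ and $\zeta\,\tau_n\bigl(G_{n-1}^2\,B\,G_{n-1}\,A\bigr)$ with $A=a_2b_1$, $B=b_2a_1$; expanding $G_{n-1}^2=(q-1)G_{n-1}+q$, the $G_{n-1}^{\,}$-terms agree by cyclicity of $\tau_n$ applied to the pair $G_{n-1}A$, $G_{n-1}B$, and the scalar terms agree by the Markov rule for $\tau_n$ together with cyclicity of $\tau_{n-1}$; the cases where $a$ or $b$ lies in $\mathcal{H}_{n-1}(q)$ are easier and work out the same way. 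The two facts you cite rather than prove — the directness of the sum and the tensor-product description of $\mathcal{H}_n(q)\,G_n\,\mathcal{H}_n(q)$, i.e.\ freeness of $\mathcal{H}_n(q)$ over $\mathcal{H}_{n-1}(q)$ — are genuine structural inputs, but they are standard and your justification (the basis $\mathcal{S}_\mathcal{H}$ plus the count $(n+1)!=n!+n\cdot n!$) is the usual one. So there is no step that would fail; the only shortfall is that the final compatibility check is described rather than carried out, and as verified above the description is accurate.
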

The trace $\tau$ is the \emph{Ocneanu trace} with parameter $\zeta$. Diagrammatically, in the second rule, $1$ corresponds to the identity braid on any number of strands. The third rule is the so-called Markov property of the trace. One can look at the left-hand illustration of Figure~\ref{tracerules} for a topological interpretation of the Markov  property.

Another characterization of the Ocneanu trace can be given as follows. Every trace function is uniquely determined by its values on the basis elements $G_w$, where $w$ runs over a certain set of representatives of the various conjugacy classes of $\mathfrak{S}_n$ \cite[8.2.6]{gp}. Following \cite[3.1.16]{gp}, these particular representatives are the elements
$w_\mu$, defined in \S\ref{conj} for every partition $\mu$ of $n$. Applying the defining formula for the Ocneanu trace $\tau$ to the element $G_{w_\mu}$, we see that
\begin{equation}\label{ocneanu2}
\tau(G_{w_\mu})=\zeta^{\ell(w_\mu)}.
\end{equation}
Conversely, if $\psi$ is any trace function on $\mathcal{H}_n(q)$ such that $\psi(G_{w_\mu})=\zeta^{\ell(w_\mu)}$ for all partitions $\mu$ of $n$, then $\psi=\tau$.

\subsection{\it  The HOMFLYPT polynomial}\label{homfly-pt}

Let now ${\mathcal L}$ be the set of isotopy classes of oriented links in $S^3$. We know from Jones' construction \cite{jo} that, in order to obtain a link invariant according to the Markov equivalence for braids,  the closed braids $\widehat{\a}$, $\widehat{\a \sigma_n}$ and $\widehat{\a \sigma_n^{-1}}$ have to be assigned the same value for any braid $\a\in B_n$. Therefore, in order to obtain a link invariant from the trace $\tau$, it has to be re-scaled, so that
$\tau(h G_n^{-1}) = \tau(h G_n) \text{ for all } h\in \mathcal{H}_n(q)$, and also normalized, so that the closed braids $\widehat{\a}$ and $\widehat{\a \sigma_n} $ be assigned the same value. Set now
$$
\lambda_{\mathcal{H}}:=\frac{\zeta+(1-q)}{q\zeta}.
$$

\begin{defn}\cite[Definition 6.1] {jo}\label{homfly-pt def} \rm
 We define a map $P$ on the set ${\mathcal L}$ by defining $P$ on the closure $\widehat{\alpha}$ of any braid $\alpha\in B_n$, for all $n \in \mathbb{N}$, as follows:
$$
P (\widehat{\alpha}) := \left(- \frac{1-\lambda_{\mathcal{H}} q}{\sqrt{\lambda_{\mathcal{H}}}(1-q)}\right)^{n-1}
(\sqrt{\lambda_{\mathcal{H}}})^{\epsilon(\alpha)} \left(\tau \circ \pi \right)(\alpha)
$$
where $\pi: {\mathbb C}B_{n}  \longrightarrow  \mathcal{H}_{n}(q)$ is the natural algebra epimorphism that maps the braid generator $\sigma_i$ to the algebra generator $G_i$, and  $\epsilon(\alpha)$ is the sum of the exponents of the braid generators in the braid word $\alpha$. Equivalently, by setting
$$
D_{\mathcal{H}}:= - \frac{1-\lambda_{\mathcal{H}} q}{\sqrt{\lambda_{\mathcal{H}}}(1-q)} = \frac{1}{\zeta\sqrt{\lambda_{\mathcal{H}}}}
$$
we can write:
$$
P (\widehat{\alpha})
= {(D_{\mathcal{H}})}^{n-1}(\sqrt{\lambda_{\mathcal{H}}})^{\epsilon(\alpha)} \left(\tau \circ \pi \right)(\alpha).
$$
\end{defn}
As it turns out, the map $P$ is well-defined on ${\mathcal L}$ and it defines the well-known {\it 2-variable Jones} or {\it HOMFLYPT polynomial}, an isotopy invariant of classical knots and links.
This map depends on the quadratic relation (\ref{hecke})$\mathrm{(h)}$, so an automorphism of the Iwahori-Hecke algebra $\mathcal{H}_n(q)$ may give rise to a different map. However, one can easily check that the map $P'$ induced by the automorphism
\begin{equation}\label{heckeauto}
G_i \mapsto -q^{-1}G_i
\end{equation}
is equal to $P$ (if  $G_i':=-q^{-1}G_i$, then $G_i'^2=(q^{-1}-1)G_i'+q^{-1}$ and $\tau(G_i')=-q^{-1}\zeta$). We will need this later.

\section{Knot invariants from the Yokonuma-Hecke algebras}

\subsection{\it The Yokonuma-Hecke algebra}

In the sequel we fix  $d\in {\mathbb N}$. Let $u \in {\mathbb C}\backslash \{0\}$.  The Yokonuma-Hecke algebra, denoted by ${\rm Y}_{d,n}(u)$, is a ${\mathbb C}$-associative algebra generated by the elements
$$
 g_1, \ldots, g_{n-1}, t_1, \ldots, t_{n}
$$
subject to the following relations:
\begin{equation}\label{modular}
\begin{array}{ccrclcl}
\mathrm{(b}_1)& & g_ig_j & = & g_jg_i && \mbox{for $\vert i-j\vert > 1$}\\
\mathrm{(b}_2)& & g_ig_jg_i & = & g_jg_ig_j && \mbox{for $ \vert i-j\vert = 1$}\\
\mathrm{(f}_1)& & t_i t_j & =  &  t_j t_i &&  \mbox{for all $ i,j$}\\
\mathrm{(f}_2)& & t_j g_i & = & g_i t_{s_i(j)} && \mbox{for all $ i,j$}\\
\mathrm{(f}_3)& & t_j^d   & =  &  1 && \mbox{for all $j$}
\end{array}
\end{equation}
where $s_i$ is the transposition $(i, i+1)$, together with the extra quadratic relations:
\begin{equation}\label{quadr}
g_i^2 = 1 + (u-1) \, e_{i} + (u-1) \, e_{i} \, g_i \qquad \mbox{for all $i$}
\end{equation}
 where
\begin{equation}\label{ei}
e_i :=\frac{1}{d}\sum_{s=0}^{d-1}t_i^s t_{i+1}^{-s}.
\end{equation}

It is easily verified that the elements $e_i$ are idempotents in ${\rm Y}_{d,n}(u)$ \cite[Lemma 4]{ju}.  Also, that the elements $g_i$ are invertible, with
\begin{equation}\label{invrs}
g_i^{-1} = g_i + (u^{-1} - 1)\, e_i + (u^{-1} - 1)\, e_i\, g_i.
\end{equation}

The relations $({\rm b}_1)$, $({\rm b}_2)$, $({\rm f}_1)$ and $({\rm f}_2)$ are defining relations for the classical framed braid group $\mathcal{F}_n \cong \Z^n\rtimes B_n$, with the $t_j$'s being interpreted as the `elementary framings' (framing 1 on the $j$th strand). The relations $t_j^d = 1$ mean that the framing of each braid strand is regarded modulo~$d$.
Thus, the algebra ${\rm Y}_{d,n}(u)$ arises naturally  as a quotient of the framed braid group algebra over the modular relations ~$\mathrm{(f}_3)$ and the quadratic relations~(\ref{quadr}) \cite{ju}. Moreover, relations (\ref{modular}) are defining relations for the  modular framed braid group
$
\mathcal{F}_{d,n}\cong (\Z/d\Z)^n\rtimes B_n,
$
so the algebra ${\rm Y}_{d,n}(u)$ can be also seen  as a quotient of the modular framed braid group algebra over the  quadratic relations~(\ref{quadr}).

From the above, the algebra ${\rm Y}_{d,n}(u)$ has natural topological interpretation in the context of framed braids and framed knots. However, in~\cite{jula3}  a different topological interpretation to ${\rm Y}_{d,n}(u)$ was given, in relation to classical knots and links. Namely, viewing the $t_j$'s only as formal generators and ignoring their framing interpretation, we have by relations $({\rm b}_1)$ and  $({\rm b}_2)$ that the classical braid group $B_n$ is represented in ${\rm Y}_{d,n}(u)$.

The Yokonuma-Hecke algebra was originally introduced by T.~Yokonuma~\cite{yo}. For $d=1$, the algebra ${\rm Y}_{1,n}(u)$ coincides with the Iwahori-Hecke algebra $\mathcal{H}_n(u)$ of type $A$.
For more details  and for further topological interpretations, see \cite{jula1, jula2, jula3, jula4} and references therein.

\subsection{\it Computation formulas in the Yokonuma-Hecke algebra}

Let $i,k \in\{1,2,\ldots, n\}$ and let
\begin{equation}\label{edik}
e_{i,k} := \frac{1}{d} \sum_{s=0}^{d-1}t_i^st_{k}^{d-s}.
\end{equation}
 Clearly
$e_{i,k} = e_{k,i}$ and it can be easily deduced that $e_{i,k}^2 = e_{i,k}$. Note that $e_{i,i}=1$ and that $e_{i, i+1}=e_i$.
Now, in ${\rm Y}_{d,n}(u)$ the following relations hold (see \cite[Lemma~4, Proposition~5]{jula1}):
\begin{equation}\label{edikrels}
\begin{array}{rcll}
t_j e_{i} & = & e_{i} t_j & \\
e_{j} e_{i} & = & e_{i} e_{j} & \\
g_j e_{i} & = & e_{i} g_j & \text{ for } j\not= i-1, i+1 \\
g_{i-1} e_{i} & = & e_{i-1,i+1} g_{i-1}&\\
e_{i} g_{i-1}& =& g_{i-1} e_{i-1,i+1}&\\
g_{i+1} e_{i} & = & e_{i,i+2} g_{i+1}&\\
e_{i} g_{i+1}& =& g_{i+1} e_{i,i+2}& \\
e_j g_ig_j & = & g_ig_je_i & \text{ for } \vert i - j \vert =1.
\end{array}
\end{equation}
Note that, using (\ref{invrs}), relations (\ref{edikrels}) are also valid if all $g_k$'s are replaced by their inverses $g_k^{-1}$.  Moreover, the following relations hold in ${\rm Y}_{d,n}(u)$.

\begin{lem}\label{newequal}
Let $i,k \in\{1,2,\ldots, n\}$. We have
$$
t_ie_{i,k}=t_{k}e_{i,k}.
$$
In particular,
$$
t_ie_{i}=t_{i+1}e_{i}.
$$
\end{lem}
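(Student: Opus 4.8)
The plan is to compute directly from the definition $e_{i,k} = \frac{1}{d}\sum_{s=0}^{d-1} t_i^s t_k^{d-s}$, multiplying on the left by $t_i$ and separately by $t_k$, and then showing the two resulting sums agree after re-indexing. Multiplying by $t_i$ gives $t_i e_{i,k} = \frac{1}{d}\sum_{s=0}^{d-1} t_i^{s+1} t_k^{d-s}$, while multiplying by $t_k$ gives $t_k e_{i,k} = \frac{1}{d}\sum_{s=0}^{d-1} t_i^{s} t_k^{d-s+1}$; here I am using relation $(\mathrm{f}_1)$ freely so that all the $t_i$ and $t_k$ commute.

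Next I would reconcile the two sums using the modular relation $(\mathrm{f}_3)$, namely $t_i^d = t_k^d = 1$. In the expression for $t_i e_{i,k}$, substitute $s' = s+1$, so the index runs over $s' \in \{1,2,\ldots,d\}$ and the summand becomes $t_i^{s'} t_k^{d-s'+1}$. The term at $s'=d$ is $t_i^d t_k^{1} = t_k$, which by $(\mathrm{f}_3)$ equals the $s'=0$ term $t_i^0 t_k^{d+1} = t_k^{d+1} = t_k$. Hence the sum over $s' \in \{1,\ldots,d\}$ equals the sum over $s' \in \{0,1,\ldots,d-1\}$ of $t_i^{s'} t_k^{d-s'+1}$, which is exactly $d$ times $t_k e_{i,k}$. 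This yields $t_i e_{i,k} = t_k e_{i,k}$. The particular case follows immediately by taking $k = i+1$ and recalling $e_{i,i+1} = e_i$.

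There is really no substantial obstacle here; the only thing to be careful about is the bookkeeping of exponents modulo $d$ and making sure the re-indexing shift is matched correctly against $(\mathrm{f}_3)$, i.e.\ that the ``extra'' term produced by the shift is precisely cancelled (or rather identified with) the term dropped at the other end of the range. One could alternatively phrase the whole argument as: $t_i e_{i,k} - t_k e_{i,k} = \frac{1}{d}\sum_{s=0}^{d-1}\big(t_i^{s+1}t_k^{d-s} - t_i^s t_k^{d-s+1}\big)$, which telescopes to $\frac{1}{d}\big(t_i^d t_k^0 - t_i^0 t_k^d\big) = \frac{1}{d}(1 - 1) = 0$ after using $t_i t_j = t_j t_i$ to align the factors and $t_i^d = t_k^d = 1$. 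This telescoping formulation is the cleanest and is the one I would write up.
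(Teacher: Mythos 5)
Your proof is correct and takes essentially the same route as the paper: multiply by $t_i$, shift the summation index by one, and use $(\mathrm{f}_1)$ and $t_i^d=t_k^d=1$ to identify the shifted sum with $t_k e_{i,k}$. One tiny bookkeeping slip in your telescoping variant: the endpoints are $t_i^d t_k^{1}-t_i^{0}t_k^{d+1}=t_k-t_k$, not $t_i^d t_k^{0}-t_i^{0}t_k^{d}$, though either way the difference vanishes and the argument stands.
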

\begin{proof}
We have
$$t_ie_{i,k}=\frac{1}{d} \sum_{s=0}^{d-1}t_i^{s+1}t_{k}^{d-s}=\frac{1}{d} \sum_{r=1}^{d}t_i^{r}t_{k}^{d-r+1}=\frac{1}{d} \left(\sum_{r=1}^{d-1}t_i^{r}t_{k}^{d-r+1}+t_k\right)=
t_k\left(\frac{1}{d} \sum_{r=0}^{d-1}t_i^{r}t_{k}^{d-r}\right)=t_ke_{i,k}.
$$
\end{proof}

The following equalities are easy to check (see, for example, \cite[Lemma1]{jula2}):

\begin{lem}\label{powers}
Let $m \in \mathbb{N}$.  Then, if $m$ is even, we have
$$
g_i^{m} = \frac{u^m -1}{u+1} \, e_{i}\, g_i  + \frac{u^m -1}{u+1} \, e_{i} +  1,
$$
and if $m$ is odd, we have
$$
g_i^{m} = \frac{u^{m} -u}{u+1} \, e_{i} g_i + \frac{u^{m} -u}{u+1} \, e_{i} + g_i.
$$
\end{lem}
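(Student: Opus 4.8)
The statement to prove is Lemma~\ref{powers}, which computes $g_i^m$ in closed form in terms of $g_i$, $e_i$, and $1$. The plan is to proceed by induction on $m$, starting from the quadratic relation (\ref{quadr}), namely $g_i^2 = 1 + (u-1)e_i + (u-1)e_ig_i$, which is exactly the claimed formula for the even case $m=2$ (since $\frac{u^2-1}{u+1} = u-1$), and which, after multiplying by $g_i$ and using $e_ig_i^2 = e_i(1+(u-1)e_i+(u-1)e_ig_i) = e_i + (u-1)e_i + (u-1)e_ig_i = ue_i + (u-1)e_ig_i$ (here I use that $e_i$ is idempotent and commutes with itself), gives the odd case $m=3$. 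So the base cases are immediate.

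For the inductive step I would treat the parity cases together: assume the formula holds for $m$ and derive it for $m+2$ by computing $g_i^{m+2} = g_i^m \cdot g_i^2$. Substituting the inductive expression for $g_i^m$ and the quadratic relation for $g_i^2$, one expands the product and repeatedly uses two facts: $e_i^2 = e_i$, and $e_i g_i^2 = ue_i + (u-1)e_ig_i$ (equivalently $e_i g_i \cdot g_i = ue_i + (u-1)e_ig_i$). Every term that appears will reduce to a $\mathbb{C}(u)$-combination of $1$, $e_i$, and $e_ig_i$ (note $g_i$ itself only appears in the odd-case formula via the standalone ``$+g_i$'' coming ultimately from the $1$-term, which is the part not multiplied by any $e_i$). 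Collecting coefficients and simplifying the rational functions in $u$ — using identities like $\frac{u^m-1}{u+1}\cdot(u-1) + (u-1) + \text{(carry terms)} = \frac{u^{m+2}-1}{u+1}$ in the even case — yields the desired formula for $m+2$. This is a routine but slightly fiddly algebraic verification; it is essentially the same computation as the analogous Iwahori-Hecke formulas (\ref{hpowersev})--(\ref{hpowersod}), ``twisted'' by the idempotent $e_i$.

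Alternatively, and perhaps more cleanly, one can avoid splitting by parity: observe that $e_i g_i^2 = ue_i + (u-1) e_i g_i$ shows that on the two-dimensional space spanned by $e_i$ and $e_ig_i$, left multiplication by $g_i$ acts as a $2\times 2$ matrix over $\mathbb{C}(u)$ with characteristic polynomial $X^2 - (u-1)X - u = (X-u)(X+1)$; hence $e_ig_i^m$ is a combination of $u^m$ and $(-1)^m$, and matching the coefficients at $m=1$ and one more value pins them down. Meanwhile $(1-e_i)g_i^2 = 1-e_i$, so $(1-e_i)g_i^m = 1-e_i$ if $m$ is even and $(1-e_i)g_i = g_i - e_ig_i$ if $m$ is odd. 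Adding $e_ig_i^m + (1-e_i)g_i^m = g_i^m$ and separating the parity of $m$ in the $(1-e_i)$ part reproduces exactly the two stated formulas. I expect the main obstacle — such as it is — to be purely bookkeeping: keeping the rational-function coefficients straight through the expansion and confirming that the ``$+1$'' in the even case and the ``$+g_i$'' in the odd case emerge with coefficient exactly $1$. There is no conceptual difficulty; one simply cites \cite[Lemma~1]{jula2} or carries out the induction as above.
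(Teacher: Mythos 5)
Your proposal is correct. Note, though, that the paper itself offers no proof of Lemma~\ref{powers}: it simply states that the equalities are ``easy to check'' and points to \cite[Lemma 1]{jula2}, so any complete argument you give is necessarily your own route. Both of your routes work. The induction in steps of two is the standard computation (the base cases $m=2,3$ and the reduction $e_ig_i^2 = ue_i+(u-1)e_ig_i$, which follows from left-multiplying the quadratic relation (\ref{quadr}) by $e_i$ and using $e_i^2=e_i$, are exactly right). Your second argument is the cleaner one and checks out completely: since $(1-e_i)g_i^2 = 1-e_i$, the ``non-$e_i$'' part of $g_i^m$ is $1-e_i$ or $(1-e_i)g_i$ according to parity, while on the span of $e_i$ and $e_ig_i$ multiplication by $g_i$ satisfies $X^2-(u-1)X-u=(X-u)(X+1)$, so $e_ig_i^m = \frac{u^m+u(-1)^m}{u+1}\,e_i + \frac{u^m-(-1)^m}{u+1}\,e_ig_i$; adding the two pieces reproduces precisely the two displayed formulas. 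This decomposition via the idempotent $e_i$ buys a uniform treatment of both parities and makes transparent why the coefficients are combinations of $u^m$ and $(-1)^m$, whereas the inductive expansion is more bookkeeping but uses nothing beyond the quadratic relation. One cosmetic remark: where you invoke that ``$e_i$ commutes with itself'' (and later ``left multiplication''), nothing beyond associativity and $e_i^2=e_i$ is actually needed, though the commutation $e_ig_i=g_ie_i$ is in any case available from (\ref{edikrels}) if you prefer to phrase the action on the span $\langle e_i, e_ig_i\rangle$ as left multiplication.
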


\subsection{\it The  Juyumaya trace ${\rm tr}$}

The natural inclusions ${\mathcal F}_n  \subset {\mathcal F}_{n+1}$ of the classical framed braid groups induce natural inclusions ${\mathcal F}_{d,n} \subset {\mathcal F}_{d,n+1}$ of modular framed braid groups and these give rise to the algebra inclusions:
$$
{\mathbb C}{\mathcal F}_{d,0}  \, \subset {\mathbb C}{\mathcal F}_{d,1} \subset \, {\mathbb C}{\mathcal F}_{d,2} \subset \ldots
$$
(setting ${\mathbb C}{\mathcal F}_{d,0}:={\mathbb C}$), which in turn induce the following algebra inclusions:
\begin{equation}\label{tower}
{\rm Y}_{d,0}(u)  \, \subset {\rm Y}_{d,1}(u) \subset {\rm Y}_{d,2}(u) \subset \ldots
\end{equation}
(setting ${\rm Y}_{d,0}(u):={\mathbb C}$). Then we have the following:

\begin{thm}\cite[Theorem 12]{ju}\label{trace} 
Let $z,\, x_1,\, \ldots, x_{d-1}$ be indeterminates. There exists a unique linear Markov trace
$$
{\rm tr}:  \bigcup_{n \geq 0}{\rm Y}_{d,n}(u) \longrightarrow   {\mathbb C}[z, x_1, \ldots, x_{d-1}]
$$
defined inductively on ${\rm Y}_{d,n}(u)$ for all $n$, by the following rules:
$$
\begin{array}{rcll}
{\rm tr}(ab) & = & {\rm tr}(ba)  \qquad &  \\
{\rm tr}(1) & = & 1 & \\
{\rm tr}(ag_n) & = & z\, {\rm tr}(a) \qquad &  (\text{Markov  property} )\\
{\rm tr}(at_{n+1}^m) & = & x_m {\rm tr}(a)\qquad  & (m = 1, \ldots , d-1)
\end{array}
$$
where $a,b \in {\rm Y}_{d,n}(u)$.
\end{thm}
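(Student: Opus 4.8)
The plan is to mimic the classical inductive construction of a Markov trace on a tower of algebras --- the same scheme that yields the Ocneanu trace in Theorem~\ref{otrace} --- with extra care for the framing generators $t_j$. First I would record the structural input. The algebra ${\rm Y}_{d,n}(u)$ has the standard ${\mathbb C}$-basis $\{t_1^{a_1}\cdots t_n^{a_n}\,g_w \mid 0\le a_i\le d-1,\ w\in\mathfrak{S}_n\}$ (so $\dim_{\mathbb C}{\rm Y}_{d,n}(u)=d^n n!$; cf.\ \cite{ju}). Combining this with the coset decomposition $\mathfrak{S}_n=\bigsqcup_{j=1}^{n}\mathfrak{S}_{n-1}(s_{n-1}s_{n-2}\cdots s_j)$ (lengths adding, so that $g_w=g_{w'}g_{n-1}\cdots g_j$ for a unique $w'\in\mathfrak{S}_{n-1}$ and $j$) and with the fact that $t_n$ commutes with the whole subalgebra ${\rm Y}_{d,n-1}(u)$ (by $(\mathrm{f}_1)$ and $(\mathrm{f}_2)$), one sees that ${\rm Y}_{d,n}(u)$ is a \emph{free left ${\rm Y}_{d,n-1}(u)$-module} on the $dn$ elements
\[
\mathcal{B}_n=\{\, t_n^a\, g_{n-1}g_{n-2}\cdots g_j \mid 0\le a\le d-1,\ 1\le j\le n\,\},
\]
where the product is empty when $j=n$ (so $\mathcal{B}_n$ contains $1,t_n,\dots,t_n^{d-1}$).

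Next I would set up the recursion and, simultaneously, uniqueness. Starting from ${\rm tr}={\rm id}$ on ${\rm Y}_{d,0}(u)={\mathbb C}$ and assuming ${\rm tr}$ already built on ${\rm Y}_{d,n-1}(u)$, I define, for $X=\sum_{b\in\mathcal{B}_n}Y_b\,b$ (uniquely, with $Y_b\in{\rm Y}_{d,n-1}(u)$),
\[
{\rm tr}(X):=\sum_{b\in\mathcal{B}_n}\rho_b(Y_b),\qquad \rho_1={\rm tr},\quad \rho_{t_n^a}(Y)=x_a\,{\rm tr}(Y)\ \ (1\le a\le d-1,\ x_0:=1),
\]
and $\rho_{\,t_n^a g_{n-1}\cdots g_j}(Y):=z\cdot{\rm tr}\big((g_{n-2}\cdots g_j)\,Y\,t_{n-1}^a\big)$ for $1\le j\le n-1$. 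These formulas are exactly what the four rules impose: sliding framings with $t_n g_{n-1}=g_{n-1}t_{n-1}$ from $(\mathrm{f}_2)$, using centrality of $t_n$ in ${\rm Y}_{d,n-1}(u)$, the Markov rule ${\rm tr}(\,\cdot\,g_{n-1})=z\,{\rm tr}(\cdot)$, the $t$-rule, and the (inductively known) trace property on level $n-1$, one computes what ${\rm tr}(Yb)$ must be for $b\in\mathcal{B}_n$ and lands precisely on $\rho_b(Y)$. The same computation establishes \emph{uniqueness}: any ${\mathbb C}$-linear functional obeying the four rules must agree with ${\rm tr}$ on $\mathcal{B}_n$, hence on all of ${\rm Y}_{d,n}(u)$. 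By construction ${\rm tr}$ is ${\mathbb C}[z,x_1,\dots,x_{d-1}]$-valued, has ${\rm tr}(1)=1$, and satisfies the Markov rule and the $t$-rule at the top level (these being the cases $b=g_{n-1}$ and $b=t_n^m$ of the definition).

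What remains, and what I would do by induction on $n$, is to verify ${\rm tr}(AB)={\rm tr}(BA)$ on ${\rm Y}_{d,n}(u)$. By bilinearity and the standard reduction (a functional is a trace as soon as ${\rm tr}(gX)={\rm tr}(Xg)$ holds for every algebra generator $g$ and every $X$), it suffices to treat $g\in\{g_1,\dots,g_{n-1},t_1,\dots,t_n\}$ with $X$ running over the standard basis. Writing $X$ in inductive form and pushing $g$ through it with the relations (\ref{modular}), most cases collapse at once to the induction hypothesis (``${\rm tr}$ is a trace on ${\rm Y}_{d,n-1}(u)$'') after using $(\mathrm{f}_2)$ to move framings past braiding generators, and a few need only the top-level $t$-rule or Markov rule just established. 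The hard part will be the cases where both $g$ and $X$ involve $g_{n-1}$, so that a factor $g_{n-1}^2$ is created: there I would expand via the quadratic relation (\ref{quadr}) (and Lemma~\ref{powers} for any higher power that arises), then rewrite the resulting terms $Y e_{n-1}$ and $Y e_{n-1}g_{n-1}$ with $Y\in{\rm Y}_{d,n-1}(u)$ by expanding $e_{n-1}=\frac1d\sum_s t_{n-1}^s t_n^{-s}$ as in (\ref{ei}) and simplifying with (\ref{edikrels}) and Lemma~\ref{newequal}, feed the result back into the recursive definition of ${\rm tr}$ from the previous paragraph, and check that $gX$ and $Xg$ yield the same value. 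Carrying the functionals $\rho_b$ faithfully through the quadratic relation and through the idempotents $e_i,e_{i,k}$ is the genuine obstacle; everything else is routine manipulation with (\ref{modular}) and (\ref{edikrels}).
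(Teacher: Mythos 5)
First, a point of reference: the paper does not prove this statement at all --- it is quoted from Juyumaya's paper \cite{ju} (Theorem 12), and the only hint the present paper gives is that the inductive basis of \S\ref{linbasind} is ``the key in the construction.'' Your setup is consistent with that: the free left ${\rm Y}_{d,n-1}(u)$-module decomposition you describe is exactly the inductive basis (\ref{basis}) after commuting the framing through with $(\mathrm{f}_2)$, your recursive formula for the $\rho_b$ is the value forced by the four rules, and your uniqueness argument is correct. So nothing you wrote is wrong.

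The genuine gap is that existence is not established. The whole content of the theorem is that the linear functional you define recursively on the module basis actually satisfies ${\rm tr}(ab)={\rm tr}(ba)$; uniqueness and the Markov/$t$-rules are, as you note, immediate from the construction. Your last paragraph reduces this to checking ${\rm tr}(gX)={\rm tr}(Xg)$ for generators $g$ (a legitimate reduction), but then only describes a plan for the critical cases --- those producing $g_{n-1}^2$, which via (\ref{quadr}) bring in $e_{n-1}=\frac1d\sum_s t_{n-1}^st_n^{-s}$ and hence mix the $t_n$-rule with the Markov rule, as well as the cases where pushing $g$ through $g_{n-1}g_{n-2}\cdots g_j$ forces the braid relation --- and you yourself flag these as ``the genuine obstacle'' without carrying any of them out. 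These verifications are not a routine afterthought: in Juyumaya's proof they are the bulk of the argument, organized through relative traces (conditional-expectation-type maps ${\rm Y}_{d,n+1}(u)\to{\rm Y}_{d,n}(u)$) and a sequence of compatibility lemmas precisely because the direct case-check is delicate; there is no a priori reason the recursively defined functional is conjugation-invariant, and if the quadratic relation had different coefficients it would not be. As it stands, your proposal is a correct framework plus an accurate identification of where the difficulty lies, but the theorem's substantive claim --- existence --- remains unproved.
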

We shall call the trace ${\rm tr}$ the \emph{Juyumaya trace}  with parameters $z,  x_1, \ldots,  x_{d-1}$. Diagrammatically, in the second rule, $1$ corresponds to the identity braid  on any number of strands with all framings zero.  The following figure gives the topological interpretations of the last two rules.

\begin{figure}[H]
\begin{picture}(330,70)
\put(-23,36){${\rm tr}$}
\qbezier(0,0)(-9,37)(0,74)  
\put(13,36){$a$}
\qbezier(8,0)(8,12)(8,24)
\qbezier(8,54)(8,62)(8,70)
\qbezier(22,54)(22,62)(22,70)
\qbezier(50,24)(52,47)(50,70)
\qbezier(30,0)(29,2)(30,4)
\qbezier(50,0)(51,2)(50,4)
\qbezier(58,0)(67,37)(58,74)  
\put(68,36){$=$}
\put(78,36){$z\,{\rm tr}$}
\qbezier(107,0)(96,37)(107,74)  
\qbezier(0,24)(0,39)(0,54)
\qbezier(30,24)(30,39)(30,54)

\qbezier(0,54)(15,54)(30,54)
\qbezier(0,24)(15,24)(30,24)
\qbezier(30,4)(30,9)(40,14)
\qbezier(40,14)(50,19)(50,24)
\qbezier(50,4)(50,8)(45,11)
\qbezier(35,17)(30,20)(30,24)
\qbezier(110,24)(110,39)(110,54)
\qbezier(140,24)(140,39)(140,54)

\qbezier(110,54)(125,54)(140,54)
\qbezier(110,24)(125,24)(140,24)
\qbezier(118,0)(118,12)(118,24)
\qbezier(118,54)(118,62)(118,70)
\qbezier(132,54)(132,62)(132,70)
\qbezier(132,0)(132,12)(132,24)
\put(122,36){$a$}
\qbezier(146,0)(156,37)(146,74) 
\put(160,36){$,$}
\put(175,36){${\rm tr}$}
\qbezier(197,0)(188,37)(197,74)  
\qbezier(202,24)(202,39)(202,54)
\qbezier(232,24)(231,39)(232,54)

\qbezier(202,54)(217,54)(232,54)
\qbezier(202,24)(217,24)(232,24)
\qbezier(210,0)(210,12)(210,24)
\qbezier(224,54)(224,62)(224,70)
\qbezier(210,54)(210,62)(210,70)
\qbezier(224,0)(224,12)(224,24)

\qbezier(240,0)(240,34)(240,68)
\put(238, 70){\tiny{ $m$}}
\put(215,36){$a$}
\qbezier(253,0)(261,37)(253,74) 
\put(261,36){$=$}
\put(272,36){$x_m\, {\rm tr}$}
\qbezier(306,0)(298,37)(306,74)  
\qbezier(310,24)(310,39)(310,54)
\qbezier(340,24)(340,39)(340,54)

\qbezier(310,54)(325,54)(340,54)
\qbezier(310,24)(325,24)(340,24)
\qbezier(318,0)(318,12)(318,24)
\qbezier(318,54)(318,62)(318,70)
\qbezier(332,54)(332,62)(332,70)
\qbezier(332,0)(332,12)(332,24)

\put(321,36){$a$}
\qbezier(345,0)(353,37)(345,74) 
\end{picture}
\caption{Topological interpretations of the trace rules}
\label{tracerules}
\end{figure}

The trace rules yield the following relations for all $i$:
\begin{equation}\label{traceofsquare}
{\rm tr}(e_i)=\frac{1}{d}\sum_{s=0}^{d-1}x_sx_{d-s}=:E \,\,\,\,\,\text{and}\,\,\,\,\, {\rm tr}(e_ig_i)={\rm tr}(g_i)=z.
\end{equation}
Using (\ref{traceofsquare}), Lemma \ref{powers}
implies that the following relations hold:
For $m \in \mathbb{Z}^{>0}$, we have
\begin{equation} \label{powersev}
{\rm tr}( g_i^m) = \frac{u^m-1}{u+1}\, z +\frac{u^m-1}{u+1}\, E+1
\quad \text{ \ if \ } m \text{ is even}
\end{equation}
and
\begin{equation} \label{powerseod}
{\rm tr}( g_i^m) =\frac{u^m+1}{u+1}\, z+\frac{u^m+1}{u+1} \,E - E
\quad \text{ \ if \ } m \text{ is odd}.
\end{equation}

\subsection{\it An inductive basis  for the Yokonuma-Hecke algebra}\label{linbasind}

The key in the construction of the trace ${\rm tr}$ is that ${\rm Y}_{d,n}(u)$ has a `nice' inductive linear $\mathbb{C}$-basis. Namely, every element of ${\rm Y}_{d,n+1}(u)$ is a unique linear combination of words of the following types:
\begin{equation}\label{basis}
 w_ng_ng_{n-1} \ldots g_i t_i^k \text{ \ \ \ \ \ or \ \ \ \ \ } w_nt_{n+1}^k \qquad  (k\in {\mathbb Z}/d{\mathbb Z})
\end{equation}
where $w_n \in {\rm Y}_{d,n}(u)$.  Thus, the above
words furnish an inductive basis for ${\rm Y}_{d,n+1}(u)$, every element of which involves  $g_n$ or a power of $t_{n+1}$ at most once.

\subsection{\it The split property for the Yokonuma-Hecke algebra}\label{linbasind2}

Due to the relations (\ref{modular})(${\rm f}_1$) and  (\ref{modular})(${\rm f}_2$), every monomial $w$ in ${\rm Y}_{d,n}(u)$ can be written in the form
$$
w=t_1^{k_1}\ldots t_n^{k_n}\cdot \sigma
$$
where $k_1,\ldots,k_n \in {\mathbb Z}/d{\mathbb Z}$ and $\sigma$ is a word in $g_1,\ldots,g_{n-1}$. That is, $w$ splits into the `framing part'
$t_1^{k_1}\ldots t_n^{k_n}$ and the `braiding part' $\sigma$. Applying further the braid relations
(\ref{modular})(${\rm b}_1$) and  (\ref{modular})(${\rm b}_2$) and the quadratic relations (\ref{quadr}), we deduce that the following set is a $\mathbb{C}$-basis for ${\rm Y}_{d,n}(u)$ \cite{ju, jula1}:
$$
\mathcal{S}_\mathrm{Y} =\left\{t_1^{k_1}\ldots t_n^{k_n}(g_{i_1}\ldots g_{i_1-r_1})(g_{i_2}\ldots g_{i_2-r_2})\cdots (g_{i_p}\ldots g_{i_p-r_p})\,\left|\,\begin{array}{l} k_1,\ldots,k_n \in {\mathbb Z}/d{\mathbb Z}\\
1\leq i_1 <\cdots<i_p \leq n-1\end{array}\right\}\right.
$$

\subsection{\it The {\rm E}-system}

Let now ${\mathcal L}$ be, as above, the set of isotopy classes of oriented links in $S^3$. As mentioned in Subsection~ \ref{homfly-pt}, likewise here, in order to obtain a link invariant from the trace ${\rm tr}$ according to the Markov equivalence for braids, the trace has to be: normalized, so that the closed braids $\widehat{\a}$ and $\widehat{\a \s_n}$ $(\a\in B_n)$ be assigned the same value of the invariant, and re-scaled, so that the closed braids $\widehat{\a \s_n^{-1}}$ and $\widehat{\a \s_n}$  $(\a\in B_n)$ be assigned the same value  of the invariant.

 Trying to do that, it was shown in \cite{jula2} that ${\rm tr}$ does not re-scale directly (being the only known Markov trace with this property). Indeed, for $\a\in {\rm Y}_{d,n}(u)$, we compute:
$$
{\rm tr}(\alpha g_n^{-1}) = {\rm tr}(\alpha g_n) + (u^{-1}-1){\rm tr}(\alpha e_{n})
 + (u^{-1}-1){\rm tr}(\alpha e_{n}g_n).
$$
Now, although
\begin{equation}\label{traceaengn}
{\rm tr}(\alpha e_{n}g_n) = {\rm tr}(\alpha g_n) = z\, {\rm tr}(\alpha) = {\rm tr}(g_n){\rm tr}(\alpha)
\end{equation}
we have that ${\rm tr}(\alpha e_{n})$ does not factor through ${\rm tr}(\alpha)$, that is,
$$
{\rm tr}(\alpha e_{n}) \neq {\rm tr}(e_{n}) {\rm tr}(\alpha)
$$
leading to the fact that ${\rm tr}(\alpha g_n^{-1})$ does not factor through ${\rm tr}(\alpha)$, that is,
$$
{\rm tr}(\alpha g_n^{-1}) \neq {\rm tr}(g_n^{-1}) {\rm tr}(\alpha).
$$
Forcing ${\rm tr}(\alpha e_{n}) = {\rm tr}(e_{n}) {\rm tr}(\alpha)$
yields that the trace parameters $x_1, \ldots, x_{d-1}$ have to satisfy the so-called ${\rm E}$-\emph{system} \cite[\S 4]{jula2}. The E-system is a non-linear system of equations of the form:

\begin{equation}\label{Esystem}
\sum_{s=0}^{d-1}{x}_{m + s} {x}_{d-s}  =  {x}_m \, \sum_{s=0}^{d-1} {x}_{s} {x}_{d-s} \qquad (1\leq m \leq d-1)
\end{equation}
where the sub-indices on the ${x}_j$'s are regarded modulo $d$ and ${x}_0:=1$. Equivalently, the E-system is written as
$$
E^{(m)} = {x}_m E
$$
where
$$
E^{(m)} :=\frac{1}{d} \sum_{s=0}^{d-1}{x}_{m+s}{x}_{d-s} \qquad \mbox{and} \qquad
E := E^{(0)} =\frac{1}{d}\sum_{s=0}^{d-1}{x}_{s}{x}_{d-s} = {\rm tr}(e_{i}).
$$
As it is shown in \cite{jula2} (in the Appendix by Paul G\'{e}rardin), the solutions of the E-system are parametrized by the non-empty subsets of ${\mathbb Z}/d{\mathbb Z}$. Let $X_{S}=\{{\rm x}_1, \ldots , {\rm x}_{d-1}\}$ be  a solution of the E-system parametrized by the non-empty subset $S$ of ${\mathbb Z}/d{\mathbb Z}$, and let us denote by ${\rm tr}_{_S}$ the Juyumaya trace with the parameters $x_1,\ldots,x_{d-1}$ specialized to ${\rm x}_1, \ldots , {\rm x}_{d-1}$. 
Then, as it turns out \cite{jula3},
\begin{equation}\label{valedi}
E = {\rm tr}_{_S} \left( e_{i}\right) = \frac{ 1}{\vert S\vert}\qquad (1\leq i\leq n-1).
\end{equation}
Moreover, we have \cite[Theorem 7]{jula2}:
\begin{equation} \label{etraceanen}
{\rm tr}_{_S}(\alpha e_{n}) \stackrel{\rm E}{=} {\rm tr}_{_S}(\alpha)\, {\rm tr}_{_S}( e_{n})=E\, {\rm tr}_{_S}(\alpha) \qquad (\a\in {\rm Y}_{d,n}(u)).
\end{equation}

\begin{notation} \rm
The symbol `$\stackrel{\rm E}{=}$' will stand for `$=$' up to the E-condition, that is, with a given solution of the E-system.
\end{notation}

\subsection{ \it The Case $E=1$}

The `trivial' solutions of the E-system are the ones parametrized by the singleton subsets of ${\mathbb Z}/d{\mathbb Z}$. By (\ref{valedi}), if $S$ is a singleton then $E= {\rm tr}_{_S} \left( e_{i}\right) =1$.
In this case,  G\'{e}rardin has shown, in the Appendix of  \cite{jula2}, that ${\rm x}_1$ is a $d$-th root of unity  and ${\rm x}_m={\rm x}_1^m$ $(1\leq m \leq d-1)$.
Consequently,
\begin{equation}\label{e1}
{\rm x}_{k+l}={\rm x}_1^{k+l}={\rm x}_1^k{\rm x}_1^l={\rm x}_k{\rm x}_l \qquad (k,l \in  {\mathbb Z}/d{\mathbb Z}).
\end{equation}
These solutions are not very interesting topologically, but we prove here that they have the following interesting property (a stronger version of  (\ref{etraceanen})):

\begin{prop}\label{E=1}
Let $X_S$ be a solution of the {\rm E}-system such that $E=1$. Then
$${\rm tr}_{_S}(\beta e_{j})\stackrel{\rm E}{=} {\rm tr}_{_S}(\beta)\, {\rm tr}_{_S}( e_{j})={\rm tr}_{_S}(\beta) \qquad (\beta\in {\rm Y}_{d,n+1}(u),\,\,1 \leq j \leq n).$$
\end{prop}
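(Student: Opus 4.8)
The plan is to prove the statement by induction on $n$, following the shape of the inductive basis (\ref{basis}) for the Yokonuma-Hecke algebras, and exploiting the multiplicativity property (\ref{e1}) of the solution $X_S$ when $E=1$. The base case $n=0$ (so $\beta \in {\rm Y}_{d,1}(u)$ and $j=1$) reduces to checking the claim on the linear basis of ${\rm Y}_{d,1}(u)$, which consists of the powers $t_1^{k}$; here ${\rm tr}_{_S}(t_1^k e_1)$ can be expanded via (\ref{ei}), Lemma~\ref{newequal} applied in the form $t_1^s t_2^{-s} = t_1^s t_1^{-s}\cdots$ is not quite available, so instead one computes ${\rm tr}_{_S}(t_1^k e_1) = \frac{1}{d}\sum_{s} {\rm tr}_{_S}(t_1^{k+s} t_2^{-s})$. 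Then, using the trace rules of Theorem~\ref{trace} together with the E-condition in its original form ${\rm tr}_{_S}(\alpha e_n) \stackrel{\rm E}{=} E\,{\rm tr}_{_S}(\alpha)$ from (\ref{etraceanen}), and the fact that $E=1$ forces ${\rm x}_{k+l}={\rm x}_k{\rm x}_l$, one identifies this with ${\rm x}_k \cdot E = {\rm x}_k = {\rm tr}_{_S}(t_1^k)$, which is exactly ${\rm tr}_{_S}(\beta){\rm tr}_{_S}(e_1)$.

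For the inductive step, I would write $\beta \in {\rm Y}_{d,n+1}(u)$ as a linear combination of basis elements of the two types in (\ref{basis}): either $w_n g_n g_{n-1}\cdots g_i t_i^k$ or $w_n t_{n+1}^k$, with $w_n \in {\rm Y}_{d,n}(u)$. By linearity it suffices to treat a single such basis element $\beta$. When $j \leq n-1$, the generator $e_j$ commutes past $g_n, g_{n-1},\ldots,g_{j+2}$ (by (\ref{edikrels})) and one reduces, using the trace rules and the Markov property (\ref{traceofsquare}), to an instance of the claim inside ${\rm Y}_{d,n}(u)$ (or ${\rm Y}_{d,n+1}(u)$ with a shorter word), invoking the induction hypothesis; the framing power $t_{n+1}^k$ case is immediate since $e_j$ with $j\leq n-1$ commutes with $t_{n+1}$ and ${\rm tr}_{_S}(\beta e_j) = x_k\,{\rm tr}_{_S}(w_n e_j) \stackrel{\rm E}{=} x_k\,{\rm tr}_{_S}(w_n) = {\rm tr}_{_S}(\beta)$. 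The genuinely new case is $j=n$, where $e_n = e_{n,n+1}$ interacts nontrivially with $g_n$; here I would split on whether the basis element contains $g_n$ (the $g_n\cdots g_i t_i^k$ word) or only a power of $t_{n+1}$. In the latter sub-case $\beta = w_n t_{n+1}^k$, and one computes ${\rm tr}_{_S}(w_n t_{n+1}^k e_n)$ by expanding $e_n = \frac{1}{d}\sum_s t_n^s t_{n+1}^{-s}$, collecting powers of $t_{n+1}$, applying the last trace rule, and then using $E=1$ and (\ref{e1}) to recombine — this is where the strength of $E=1$ over the mere E-condition is essential, giving ${\rm tr}_{_S}(w_n t_{n+1}^k e_n) = {\rm x}_k \,{\rm tr}_{_S}(w_n e_n)\stackrel{\rm E}{=}{\rm x}_k\,{\rm tr}_{_S}(w_n) = {\rm tr}_{_S}(\beta)$, using (\ref{etraceanen}) for ${\rm tr}_{_S}(w_n e_n)$ and Lemma~\ref{newequal} to handle the $t_n^s$ factors.

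For the remaining sub-case $\beta = w_n g_n g_{n-1}\cdots g_i t_i^k$ with $j=n$, I would use the commutation relations (\ref{edikrels}) to move $e_n$ to the left past the string $g_{n-1}\cdots g_i$ and $t_i^k$ (which only involves indices $\leq n-1$ on the $g$'s, so $e_n$ commutes with $t_i^k$ and passes through $g_{n-1}$ turning into $e_{n-1,n+1}$, etc., or more economically use $e_n g_n = g_n e_{?}$-type identities), until one is left with ${\rm tr}_{_S}$ of an element where $e_n$ (or a relabelled idempotent) meets $g_n$ as a rightmost pair, at which point ${\rm tr}_{_S}(\cdots e_n g_n) = z\,{\rm tr}_{_S}(\cdots e_{?})$ by the Markov property and (\ref{traceofsquare}), dropping the strand and reducing to the induction hypothesis in ${\rm Y}_{d,n}(u)$. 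The main obstacle I anticipate is exactly this bookkeeping in the $j=n$, $g_n$-present case: keeping careful track of how $e_n$ transforms into the various $e_{i,k}$'s as it commutes past the descending $g$-string, and ensuring that after applying the Markov rule the resulting idempotent has its two indices still separated by the appropriate strands so that the induction hypothesis genuinely applies; the multiplicativity $E=1$ is what makes all the reassembled sums collapse cleanly, but organizing the commutations so that one never needs more than the stated relations (\ref{edikrels}) and Lemma~\ref{newequal} will require some care.
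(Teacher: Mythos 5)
Your proposal is correct and follows the paper's own skeleton: induction on $n$ over the inductive basis (\ref{basis}), with the same case split on the position of $e_j$ and the same tools ((\ref{edikrels}), Lemma~\ref{newequal}, the trace rules, (\ref{etraceanen}) and the multiplicativity (\ref{e1})). The one genuine divergence is the case $\beta=w_nt_{n+1}^k$, $j=n$: the paper reduces it, via Lemma~\ref{newequal} and (\ref{etraceanen}), to the auxiliary claim ${\rm tr}_{_S}(w_nt_n^k)={\rm x}_k\,{\rm tr}_{_S}(w_n)$, which it then proves by a second, nested induction over the basis; you instead expand $e_n=\frac{1}{d}\sum_s t_n^st_{n+1}^{-s}$, pull out ${\rm x}_{k-s}$ by the fourth trace rule, and use ${\rm x}_{k-s}={\rm x}_k{\rm x}_{d-s}$ termwise to reassemble ${\rm x}_k\,{\rm tr}_{_S}(w_ne_n)\stackrel{\rm E}{=}{\rm x}_k\,{\rm tr}_{_S}(w_n)$ --- this is valid and eliminates the paper's extra sub-induction, a modest but real simplification. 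Two small points to tighten: the base case should be $n=1$ (so $\beta\in{\rm Y}_{d,2}(u)$, $j=1$, covering both $t_1^kg_1t_1^l$ and $t_1^kt_2^l$) rather than $n=0$; and in the sub-case $\beta=w_ng_ng_{n-1}\cdots g_it_i^k$, $j=n$, $i<n$, the cleanest bookkeeping is the paper's: commute $e_n$ unchanged past $g_{n-2},\ldots,g_i,t_i^k$ and then through the pair $g_ng_{n-1}$ at once via $g_ng_{n-1}e_n=e_{n-1}g_ng_{n-1}$ from (\ref{edikrels}), so that the surviving idempotent is $e_{n-1}\in{\rm Y}_{d,n}(u)$ (no $t_{n+1}$ left), after which cyclicity, the Markov rule and the induction hypothesis finish; your phrase ``${\rm tr}_{_S}(\cdots e_ng_n)=z\,{\rm tr}_{_S}(\cdots e_?)$'' should be organised this way, since the Markov rule requires what remains after dropping $g_n$ to lie in ${\rm Y}_{d,n}(u)$.
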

\begin{proof} It is enough to show that the above equality holds for the elements of the inductive basis given in (\ref{basis}). We will proceed by induction on $n$.
 Let $n=1$ and let $k,\,l \in {\mathbb Z}/d{\mathbb Z}$. We have
$$
{\rm tr}_{_S}(t_1^kg_1t_1^le_{1}) \stackrel{(\ref{edikrels})}{=} {\rm tr}_{_S}(t_1^kg_1e_{1}t_1^l) \stackrel{(\ref{traceaengn})}{=} {\rm tr}_{_S}(t_1^{k}g_1t_1^l).
$$
Moreover, following Lemma \ref{newequal},  we obtain:
$$
{\rm tr}_{_S}(t_1^{k}t_2^{l}e_1)={\rm tr}_{_S}(t_1^{k+l}e_1) \stackrel{\rm E}{=} {\rm tr}_{_S}(t_1^{k+l})\, {\rm tr}_{_S}( e_{1})= {\rm x}_{k+l}\stackrel{(\ref{e1})}{=}{\rm x}_{k}{\rm x}_{l}={\rm tr}_{_S}(t_1^{k}t_2^{l}).
$$
Now let $n>1$ and assume that the statement of the proposition holds for smaller values of $n$.

\bigbreak
Let $\beta=w_ng_ng_{n-1} \ldots g_i t_i^k$ for some $k\in {\mathbb Z}/d{\mathbb Z}$
and some $w_n \in {\rm Y}_{d,n}(u)$. Assume first that $j<n$.
Then
$$
{\rm tr}_{_S}(w_ng_ng_{n-1} \ldots g_i t_i^ke_j)=z {\rm tr}_{_S}(w_ng_{n-1} \ldots g_i t_i^ke_j).
$$
By the induction hypothesis, the latter is equal to
$$
z {\rm tr}_{_S}(w_ng_{n-1} \ldots g_i t_i^k)={\rm tr}_{_S}(w_ng_ng_{n-1} \ldots g_i t_i^k),
$$
so we are done. Now take $j=n$.
If $i=n$, then, by (\ref{edikrels}) and (\ref{traceaengn}):
$${\rm tr}_{_S}(w_ng_nt_n^ke_{n})={\rm tr}_{_S}(w_ng_nt_n^k).
$$
If $i<n$, then by (\ref{edikrels}):
$$
{\rm tr}_{_S}(w_ng_ng_{n-1} \ldots g_i t_i^ke_n)\stackrel{(\ref{modular})\mathrm{(f}_2)}{=}{\rm tr}_{_S}(w_ne_{n-1}g_ng_{n-1} \ldots g_i t_i^k)=z\, {\rm tr}_{_S}(g_{n-1} \ldots g_i t_i^kw_ne_{n-1}).
$$
By the induction hypothesis, the latter is equal to $z{\rm tr}_{_S}(g_{n-1} \ldots g_i t_i^kw_n)$, whence we deduce that
$$
{\rm tr}_{_S}(w_ng_ng_{n-1} \ldots g_i t_i^ke_n)={\rm tr}_{_S}(w_ng_ng_{n-1} \ldots g_i t_i^k).
$$

\bigbreak
Now let $\beta=w_nt_{n+1}^k$ for some $k\in {\mathbb Z}/d{\mathbb Z}$
and some $w_n \in {\rm Y}_{d,n}(u)$. Assume again first that $j<n$.  Applying the trace definition and the induction hypothesis we obtain:
$$
{\rm tr}_{_S}(w_nt_{n+1}^k e_{j})={\rm x}_k{\rm tr}_{_S}(w_ne_j)\stackrel{\rm E}{=}{\rm x}_k{\rm tr}_{_S}(w_n)={\rm tr}_{_S}(w_nt_{n+1}^k).
$$
Now take $j=n$.
With the use of Lemma \ref{newequal} and (\ref{etraceanen}), we obtain:
$$
{\rm tr}_{_S}(w_nt_{n+1}^k e_{n})={\rm tr}_{_S}(w_nt_{n}^k e_{n}) \stackrel{\rm E}{=} {\rm tr}_{_S}(w_nt_{n}^k) {\rm tr}_{_S}( e_{n})={\rm tr}_{_S}(w_nt_{n}^k).
$$
We need to show that, under the assumptions of the proposition:
$$
{\rm tr}_{_S}(w_nt_{n}^k) = {\rm tr}_{_S}(w_nt_{n+1}^k)={\rm x}_k  {\rm tr}_{_S}(w_n).
$$
Again, it is enough to show this for the elements of the inductive basis of
$ {\rm Y}_{d,n}(u)$. We have already shown it for $n=1$ and we will proceed by induction on $n$.
 Let $w_n=w_{n-1}g_{n-1}g_{n-2} \ldots g_i t_i^l$ for some $l\in {\mathbb Z}/d{\mathbb Z}$
and some $w_{n-1} \in {\rm Y}_{d,n-1}(u)$. Then
$$
{\rm tr}_{_S}(w_n t_n^k) = {\rm tr}_{_S}(w_{n-1}g_{n-1}g_{n-2} \ldots g_i t_i^lt_n^k)={\rm tr}_{_S}(w_{n-1}t_{n-1}^kg_{n-1}g_{n-2} \ldots g_i t_i^l)=z\,
{\rm tr}_{_S}(w_{n-1}t_{n-1}^kg_{n-2} \ldots g_i t_i^l).
$$
By the induction hypothesis, the latter is equal to
$$
z\, {\rm x}_k{\rm tr}_{_S}(w_{n-1}g_{n-2} \ldots g_i t_i^l)={\rm x}_k{\rm tr}_{_S}(w_{n-1}g_{n-1}g_{n-2} \ldots g_i t_i^l) = {\rm x}_k{\rm tr}_{_S}(w_n).
$$

Finally, let  $w_n=w_{n-1}t_n^l$ for some $l\in {\mathbb Z}/d{\mathbb Z}$
and some $w_{n-1} \in {\rm Y}_{d,n-1}(u)$. Then
$$
{\rm tr}_{_S}(w_nt_{n}^k) = {\rm tr}_{_S}(w_{n-1}t_n^lt_n^k)={\rm tr}_{_S}(w_{n-1}t_n^{l+k})={\rm x}_{k+l}{\rm tr}_{_S}(w_{n-1})\stackrel{(\ref{e1})}{=}{\rm x}_k{\rm x}_l{\rm tr}_{_S}(w_{n-1})={\rm x}_k{\rm tr}_{_S}(w_{n-1}t_n^l) = {\rm x}_k{\rm tr}_{_S}(w_n).
$$
\end{proof}

Note that there are also non-trivial solutions of the E-system, with more interesting topological interpretations.
For a detailed analysis see \cite{jula2}.

\subsection{\it Invariants for classical knots from the Juyumaya trace}

 Given a solution $X_{S}=\{{\rm x}_1, \ldots ,{\rm x}_{d-1}\}$ of the E-system, we wish to define a link isotopy invariant $\Delta_S$. Let $\alpha \in B_n$. The E-condition guarantees that
 $\Delta_{S}(\widehat{\alpha\sigma_n}) = \Delta_S(\widehat{\alpha\sigma_n^{- 1}})$. In order for $\Delta_S(\widehat{\alpha\sigma_n}) = \Delta_S(\widehat{\alpha})$ to hold, we need to normalize. Thus, by setting:
$$
 \lambda_\mathrm{Y}:= \frac{z +(1-u)E}{uz}
$$
 we define the following map on the set ${\mathcal L}$ of oriented classical link types.

\begin{defn}\cite[Definition 3]{jula3} \rm
For a solution $X_S$ of the E-system parametrized by the subset $S$ of  ${\mathbb Z}/d{\mathbb Z}$, we define a map $\Delta_S$  on the set ${\mathcal L}$  by defining $\Delta_S$ on the closure $\widehat{\alpha}$ of any braid $\alpha\in B_n$, for all $n \in {\mathbb N}$, as follows:
$$
\Delta_S (\widehat{\alpha}) := \left(- \frac{1- \lambda_\mathrm{Y} u}{\sqrt{ \lambda_\mathrm{Y}}(1-u)E}\right)^{n-1}(\sqrt{\lambda_\mathrm{Y}})^{\epsilon(\alpha)}
\left({\rm tr}_{_S} \circ \delta\right)(\alpha)
$$
where $\delta : {\mathbb C}B_{n}  \longrightarrow  {\rm Y}_{d,n}(u)$ is the natural algebra homomorphism that maps the braid generator $\sigma_i$ to the algebra generator $g_i$, and  $\epsilon(\alpha)$ is the sum of the exponents of the braid generators in the braid word $\alpha$.
Equivalently, by setting
$$
D_\mathrm{Y}:=- \frac{1- \lambda_\mathrm{Y} u}{\sqrt{ \lambda_\mathrm{Y}}(1-u)E}=\frac{1}{z\sqrt{ \lambda_\mathrm{Y}}}
$$
 we can write:
$$
\Delta_S (\widehat{\alpha}) = D_\mathrm{Y}^{n-1}(\sqrt{\lambda_\mathrm{Y}})^{\epsilon(\alpha)} \left({\rm tr}_{_S} \circ \delta\right)(\alpha).
$$
\end{defn}

In \cite{jula3} the following result was obtained:

\begin{thm}\label{invariant}
For a solution $X_{S}$ of the {\rm E}-system, the map $\Delta_S$ is well-defined on the set ${\mathcal L}$, that is, it is a $2$-variable isotopy invariant for oriented classical links, depending on the variables $u$, $z$.
\end{thm}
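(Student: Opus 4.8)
The plan is to deduce well-definedness from the classical Markov theorem: two braids $\a\in B_n$ and $\a'\in B_m$ close up to isotopic oriented links in $S^3$ if and only if $\a'$ is obtained from $\a$ by a finite sequence of \emph{conjugations} $\b\gamma\mapsto\gamma\b$ inside some $B_k$ and \emph{(de)stabilizations} $\b\leftrightarrow\b\s_k^{\pm1}$ between consecutive braid groups. Before checking the moves I would record that $\delta:\C B_n\to{\rm Y}_{d,n}(u)$ is a genuine algebra homomorphism (the $g_i$ satisfy the braid relations $(\mathrm{b}_1),(\mathrm{b}_2)$ of~(\ref{modular}) and are invertible by~(\ref{invrs})), that ${\rm tr}_{_S}$ is a well-defined trace by Theorem~\ref{trace}, and that $\epsilon$ factors through the abelianization $B_n\to\Z$ and so depends only on the braid and not on a chosen word; hence $\Delta_S(\widehat\a)$ is a well-defined function of $\a\in B_n$, and by Alexander's theorem every element of $\mathcal L$ arises as such a closure. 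It then remains to verify invariance under the two Markov moves.

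Conjugation invariance is immediate: if $\a=\b\gamma$ and $\a'=\gamma\b$ lie in the same $B_n$, then $n$ and $\epsilon(\a)=\epsilon(\a')$ are unchanged and $({\rm tr}_{_S}\circ\delta)(\b\gamma)=({\rm tr}_{_S}\circ\delta)(\gamma\b)$ by the trace identity ${\rm tr}_{_S}(ab)={\rm tr}_{_S}(ba)$. For the positive stabilization $\a\in B_n\mapsto\a\s_n\in B_{n+1}$ one has $\epsilon(\a\s_n)=\epsilon(\a)+1$ and $\delta(\a)\in{\rm Y}_{d,n}(u)\subset{\rm Y}_{d,n+1}(u)$, so the Markov property of ${\rm tr}_{_S}$ gives $({\rm tr}_{_S}\circ\delta)(\a\s_n)={\rm tr}_{_S}(\delta(\a)g_n)=z\,{\rm tr}_{_S}(\delta(\a))$, whence $\Delta_S(\widehat{\a\s_n})=\bigl(D_\mathrm{Y}\sqrt{\lambda_\mathrm{Y}}\,z\bigr)\Delta_S(\widehat\a)=\Delta_S(\widehat\a)$ since $D_\mathrm{Y}=1/(z\sqrt{\lambda_\mathrm{Y}})$.

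The one move whose verification requires the hypothesis that $X_S$ solves the E-system — and the real heart of the proof — is the negative stabilization $\a\in B_n\mapsto\a\s_n^{-1}\in B_{n+1}$, where $\epsilon(\a\s_n^{-1})=\epsilon(\a)-1$ and one must show that ${\rm tr}_{_S}(\delta(\a)g_n^{-1})$ is a fixed scalar multiple of ${\rm tr}_{_S}(\delta(\a))$. Substituting the expansion~(\ref{invrs}) of $g_n^{-1}$, the term ${\rm tr}_{_S}(\delta(\a)e_ng_n)$ equals $z\,{\rm tr}_{_S}(\delta(\a))$ by~(\ref{traceaengn}), but ${\rm tr}_{_S}(\delta(\a)e_n)$ fails to factor through ${\rm tr}_{_S}(\delta(\a))$ in general; it does so precisely under the E-condition, by~(\ref{etraceanen}), giving ${\rm tr}_{_S}(\delta(\a)e_n)=E\,{\rm tr}_{_S}(\delta(\a))$. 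Collecting terms and using the identity $z+(1-u)E=uz\,\lambda_\mathrm{Y}$ produces the scalar $z\,\lambda_\mathrm{Y}$, so that $\Delta_S(\widehat{\a\s_n^{-1}})=\bigl(D_\mathrm{Y}(\sqrt{\lambda_\mathrm{Y}})^{-1}z\,\lambda_\mathrm{Y}\bigr)\Delta_S(\widehat\a)=\bigl(D_\mathrm{Y}\sqrt{\lambda_\mathrm{Y}}\,z\bigr)\Delta_S(\widehat\a)=\Delta_S(\widehat\a)$, again by $D_\mathrm{Y}=1/(z\sqrt{\lambda_\mathrm{Y}})$. This exhausts the Markov moves, so $\Delta_S$ is well-defined on $\mathcal L$, and it manifestly involves only the parameters $u$ and $z$ once the $x_i$ are specialized to $X_S$, so it is a $2$-variable invariant.

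I would close with two minor points: $\Delta_S$ depends on a fixed choice of $\sqrt{\lambda_\mathrm{Y}}$, and $\lambda_\mathrm{Y}$ is invertible because $z$ is an indeterminate and $E=1/\vert S\vert\neq0$. The main obstacle is exactly the negative-stabilization computation above: it is the non-factorization of ${\rm tr}(\a e_n)$ — the reason ${\rm tr}$ does not re-scale directly — that forced the E-system into the picture, and it is precisely the E-condition~(\ref{etraceanen}) that makes the resulting scaling factor $z\lambda_\mathrm{Y}$ come out independent of $\a$.
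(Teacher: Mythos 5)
Your proof is correct and is essentially the argument the paper relies on: the theorem is quoted from \cite{jula3}, and the intended proof is exactly this Markov-theorem check, with conjugation handled by the trace property, positive stabilization by the Markov rule together with $D_\mathrm{Y}=1/(z\sqrt{\lambda_\mathrm{Y}})$, and negative stabilization by expanding $g_n^{-1}$ via (\ref{invrs}) and using (\ref{traceaengn}) and the E-condition (\ref{etraceanen}) to produce the scalar $z\lambda_\mathrm{Y}$. Your identification of the E-condition as the sole non-formal ingredient, and your closing remarks on the choice of $\sqrt{\lambda_\mathrm{Y}}$, match the paper's own discussion of why the trace must be re-scaled.
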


Note that for every $d\in \mathbb{N}$, the above construction provides us with $2^d -1$ isotopy invariants for knots.

As shown in \cite{CJJKL}, the invariants $\Delta_S$ (but not the trace ${\rm tr}$) have the multiplicative property on connected sums of links.  Further, it was shown in \cite{jula3} that $\Delta_S$ satisfies a `closed' cubic relation (closed in the sense of only involving braiding generators), which factors to the quadratic relation of the Iwahori--Hecke algebra $\mathcal{H}_n(u)$.  Finally, it was shown in \cite{jula2} that in  the context of framed links the invariant $\Delta_S$ satisfies a skein relation. However, when  $\Delta_S$ is seen as invariant of classical knots, this skein relation has no topological interpretation. This makes it very difficult to compare the invariants  $\Delta_S$ with the HOMFLYPT polynomial using diagrammatic methods.

\section{The Ocneanu trace vs the specialized Juyumaya trace}

In order to compare the knot invariants $P$ and $\Delta_S$, in view of the {\rm E}-condition, we would like to be able to specialize the indeterminates $x_1, \ldots, x_{d-1}$ to a solution of the {\rm E}-system as early as possible during the construction.

\subsection{\it The algebra homomorphism approach}\label{alghom}
Our first approach to the above problem is to construct an algebra homomorphism $f:\bigcup_{n \geq 0}{\rm Y}_{d,n}(u) \longrightarrow \bigcup_{n \geq 0}{\rm Y}_{d,n}(u)$ such that
$$f(t_i^m)= {\rm x}_m \quad (1 \leq m \leq d-1),$$
where ${\rm x}_1,{\rm x}_2,\ldots,{\rm x}_{d-1} \in \mathbb{C} \setminus \{0\}$.
Since $f$ is an algebra homomorphism, we must have
$$f(t_i^kt_i^l)=f(t_i^{k+l})={\rm x}_{k+l} ={\rm x}_{k}{\rm x}_{l}=f(t_i^k)f(t_i^l)\,\,\,\,\text{for}\, k,l \in \mathbb{Z}/d\mathbb{Z},$$
that is, $ {\rm x}_1$ is a $d$-th root of unity and ${\rm x}_m={\rm x}_1^m$ $(1\leq m \leq d-1)$. In this case, the set $X_S=\{{\rm x}_1,\ldots, {\rm x}_{d-1}\}$ is a solution of the
{\rm E}-system such that $E=1$.
Moreover, we have
$$f(e_i)=\frac{1}{d}\sum_{s=0}^{d-1}f(t_i^s)f( t_{i+1}^{-s})=\frac{1}{d}\sum_{s=0}^{d-1}{\rm x}_s {\rm x}_{d-s}={\rm tr}_{_S}(e_i)=E=1$$
and
$$f(g_i)^2=f(g_i^2)=f\left(1 + (u-1) \, e_{i} + (u-1) \, e_{i} \, g_i\right)=1+(u-1)+(u-1)f(g_i)=u+(u-1)f(g_i),$$
whence we can easily deduce that $f({\rm Y}_{d,n}(u))$ is isomorphic to the Iwahori--Hecke algebra $\mathcal{H}_n(u)$. 

The condition $E=1$ is quite restrictive,  thus making $f$ an uninteresting mapping for our purposes. This  is why in the rest of this section we will proceed with a step-by-step specialization $t_i^m \mapsto {\rm x}_m$, namely with the construction of a specialized Juyumaya trace, and a linear map $\varphi$ on the Yokonuma-Hecke algebra through which the trace factors. This will allow us to conclude, in Subsection \ref{consequencesE=1}, that the invariants $P$ and $\Delta_S$ coincide when $E=1$.


\subsection{\it The specialized Juyumaya trace}\label{speju}

\begin{defn}\label{theta} \rm
Let ${\rm x}_1,{\rm x}_2,\ldots,{\rm x}_{d-1} \in \mathbb{C} \setminus \{0\}$
 and consider the ring homomorphism
$$
\begin{array}{rccc}
\theta :  \ {\mathbb C}[z, x_1, \ldots, x_{d-1}]  & \longrightarrow &  {\mathbb C}[z] & \\
 z   &  \mapsto & z &  \\
 x_m   & \mapsto & {\rm x}_m &(1 \leq m \leq d-1)
\end{array}
$$
The map $\theta$ shall be called the {\em specialization map}. We will call the composition
$$
\theta \circ {\rm tr} :  \bigcup_{n \geq 0}{\rm Y}_{d,n}(u) \longrightarrow {\mathbb C}[z]
$$
the \emph{specialized Juyumaya trace}  with parameter $z$.
\end{defn}

In the case where $X_S=\{{\rm x}_1,\ldots, {\rm x}_{d-1}\}$  is a solution  of the {\rm E}-system, we have $\theta \circ  {\rm tr}={\rm tr}_{_S}$. Note also that in the case $d=1$, when the algebra ${\rm Y}_{1,n}(u)$ coincides with the Iwahori-Hecke algebra $\mathcal{H}_n(u)$, $\theta$ is simply the identity map on $ {\mathbb C}[z]$ and the specialized Juyumaya trace $\theta \circ  {\rm tr}={\rm tr}$ coincides with the Ocneanu trace with parameter $z$.

\subsection{\it Similarities with the Ocneanu trace}\label{similarities1}

In this subsection we will give another characterization of the specialized Juyumaya trace as follows. Let $w\in \mathfrak{S}_n$ and let  $w=s_{i_1}s_{i_2}\ldots s_{i_r}$ be a reduced expression. Then we can set
$g_w:=g_{i_1}g_{i_2}\ldots g_{i_r}$. If $w,w' \in  \mathfrak{S}_n$ are such that $\ell(ww')=\ell(w)+\ell(w')$, then we have
\begin{equation}\label{multilength}
g_wg_{w'}=g_{ww'}.
\end{equation}

 Let $\mu$ be a partition of $n$ and let $w_\mu$ be the corresponding element of $\mathfrak{S}_n$ defined in \S\ref{conj}.
Applying the defining formula for the Juyumaya trace ${\rm tr}$ to the element $g_{w_\mu}$, we see that ${\rm tr}(g_{w_\mu})=z^{\ell(w_\mu)}$, whence we deduce:
\begin{equation}\label{spju2}
(\theta \circ {\rm tr})(g_{w_\mu})=z^{\ell(w_\mu)}.
\end{equation}
We will show that, as in the case of Iwahori-Hecke algebra of type $A$, the specialized Juyumaya trace  on  ${\rm Y}_{d,n}(u)$  is uniquely determined by its values on the elements
$g_{w_\mu}$, where $\mu$ runs over the partitions of $n$. That is, if $\psi$ is any trace function on ${\rm Y}_{d,n}(u)$ such that $\psi(g_{w_\mu})=z^{\ell(w_\mu)}$ for all partitions $\mu$ of $n$, then $\theta \circ \psi=\theta \circ {\rm tr}$.
 To achieve our aim, we shall first construct a linear map
$\varphi: \bigcup_{n \geq 0}{\rm Y}_{d,n}(u) \longrightarrow \bigcup_{n \geq 0}{\rm Y}_{d,n}(u)$
with the property:
${\rm tr} \circ \varphi = \theta \circ {\rm tr}$.

\begin{prop}\label{phi} Let $\theta$ be as above and set ${\rm x}_0:=1$.
Let
$
\varphi : \bigcup_{n \geq 0}{\rm Y}_{d,n}(u) \longrightarrow \bigcup_{n \geq 0}{\rm Y}_{d,n}(u)
$
be the linear map defined inductively on ${\rm Y}_{d,n}(u)$, for all $n \in \mathbb{N}$, by the following rules:
$$
\begin{array}{rcll}
\varphi(1) & = & 1  \qquad &  \\
 \varphi(w_ng_ng_{n-1} \ldots g_i t_i^k) & = &g_n \varphi(w_ng_{n-1} \ldots g_i t_i^k) & \\
\varphi(w_nt_{n+1}^k)& = & {\rm x}_k \varphi(w_n)
\end{array}
 $$
where $w_n \in {\rm Y}_{d,n}(u)$ and $k\in {\mathbb Z}/d{\mathbb Z}$. Then we have:
\begin{equation}\label{factorthrough}
{\rm tr} \circ \varphi = \theta \circ {\rm tr}.
\end{equation}
\end{prop}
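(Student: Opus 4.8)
The plan is to verify the identity $({\rm tr} \circ \varphi)(w) = (\theta \circ {\rm tr})(w)$ on the inductive basis of ${\rm Y}_{d,n+1}(u)$ described in Subsection~\ref{linbasind}, proceeding by induction on $n$. Since both ${\rm tr}\circ\varphi$ and $\theta\circ{\rm tr}$ are linear maps into ${\mathbb C}[z]$, it suffices to check the equality on the two families of basis words, $w_ng_ng_{n-1}\ldots g_it_i^k$ and $w_nt_{n+1}^k$, with $w_n\in{\rm Y}_{d,n}(u)$ and $k\in{\mathbb Z}/d{\mathbb Z}$. The base case $n=0$ is immediate: $\varphi(1)=1$, so $({\rm tr}\circ\varphi)(1)=1=(\theta\circ{\rm tr})(1)$, and the only other basis elements of ${\rm Y}_{d,1}(u)$ are the powers $t_1^k$, for which $\varphi(t_1^k)={\rm x}_k$ and ${\rm tr}(t_1^k)=x_k$, so $({\rm tr}\circ\varphi)(t_1^k)={\rm x}_k=\theta(x_k)=(\theta\circ{\rm tr})(t_1^k)$.

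For the inductive step, suppose the identity holds on ${\rm Y}_{d,n}(u)$. For a basis word of the first type, apply the definition of $\varphi$ to get $\varphi(w_ng_ng_{n-1}\ldots g_it_i^k)=g_n\,\varphi(w_ng_{n-1}\ldots g_it_i^k)$; note that $\varphi(w_ng_{n-1}\ldots g_it_i^k)$ lies in ${\rm Y}_{d,n}(u)$ by construction. Then by the Markov property of ${\rm tr}$ (Theorem~\ref{trace}),
$$
({\rm tr}\circ\varphi)(w_ng_ng_{n-1}\ldots g_it_i^k)={\rm tr}\bigl(g_n\,\varphi(w_ng_{n-1}\ldots g_it_i^k)\bigr)=z\,{\rm tr}\bigl(\varphi(w_ng_{n-1}\ldots g_it_i^k)\bigr).
$$
By the induction hypothesis this equals $z\,(\theta\circ{\rm tr})(w_ng_{n-1}\ldots g_it_i^k)$, and applying the Markov property of ${\rm tr}$ once more (inside $\theta$, which commutes with the scalar $z$) gives $(\theta\circ{\rm tr})(w_ng_ng_{n-1}\ldots g_it_i^k)$, as desired. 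For a basis word of the second type, $\varphi(w_nt_{n+1}^k)={\rm x}_k\,\varphi(w_n)$, so ${\rm tr}(\varphi(w_nt_{n+1}^k))={\rm x}_k\,{\rm tr}(\varphi(w_n))={\rm x}_k\,(\theta\circ{\rm tr})(w_n)$ by the induction hypothesis; on the other side, the last trace rule in Theorem~\ref{trace} gives $(\theta\circ{\rm tr})(w_nt_{n+1}^k)=\theta(x_k\,{\rm tr}(w_n))={\rm x}_k\,(\theta\circ{\rm tr})(w_n)$, which matches.

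The one subtlety to address carefully—and the main point requiring attention rather than the computations themselves—is that $\varphi$ is defined by induction \emph{on the inductive basis} of ${\rm Y}_{d,n+1}(u)$, so one must know that $\varphi$ is well-defined as a linear map, i.e.\ that the defining rules are consistent. This is exactly the content of the uniqueness of expression in the inductive basis (\ref{basis}): every element of ${\rm Y}_{d,n+1}(u)$ is a \emph{unique} linear combination of words of those two types, so extending the stated rules linearly produces a well-defined $\varphi$, and there is nothing further to check. Granting this, the induction above closes and establishes (\ref{factorthrough}). I would state the well-definedness of $\varphi$ explicitly at the start of the proof (citing Subsection~\ref{linbasind}) and then run the two-case induction as above; no genuine obstacle arises, only the bookkeeping of tracking which subword lies in which algebra of the tower.
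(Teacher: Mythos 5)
Your proposal is correct and follows essentially the same argument as the paper: induction on $n$, checking the identity on the two families of inductive basis words, using the Markov property of ${\rm tr}$ (together with the observation that $\varphi(w_ng_{n-1}\ldots g_it_i^k)\in{\rm Y}_{d,n}(u)$) for the first type and the rule ${\rm tr}(at_{n+1}^k)=x_k{\rm tr}(a)$ for the second, with $\theta$ commuting past $z$ and ${\rm x}_k$. The only differences are cosmetic: you start the induction one level earlier and make explicit the well-definedness of $\varphi$ via uniqueness of the inductive basis expression, which the paper leaves implicit.
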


\begin{proof}
It is enough to show that (\ref{factorthrough}) holds on the elements of the inductive basis of ${\rm Y}_{d,n+1}(u)$, and we will do this by induction on $n$. From now on, $k$ and $l$ are elements of  ${\mathbb Z}/d{\mathbb Z}$.

First, let  $n=1$. We have
$$
{\rm tr}\left(\varphi (t_1^kg_1) \right) ={\rm tr} \left(g_1 \varphi (t_1^k)\right) ={\rm tr}({\rm x}_kg_1)={\rm x}_kz = \theta(x_kz)= \theta\left({\rm tr} (t_1^kg_1) \right)
$$
and
$$
{\rm tr} \left(\varphi (t_1^lt_2^k)\right) ={\rm tr} \left({\rm x}_k \varphi (t_1^l)\right) ={\rm tr}({\rm x}_k{\rm x}_l)={\rm x}_k{\rm x}_l=\theta(x_kx_l)=\theta\left({\rm tr}(t_1^lt_2^k)\right).
$$
Now assume that (\ref{factorthrough}) holds for smaller values of $n$.
We have
$$
{\rm tr} \left(\varphi(w_ng_ng_{n-1} \ldots g_i t_i^k) \right) = {\rm tr} \left(g_n \varphi(w_ng_{n-1} \ldots g_i t_i^k) \right) = z\, {\rm tr} \left(\varphi(w_ng_{n-1} \ldots g_i t_i^k) \right),
$$
since $\varphi(w_ng_{n-1} \ldots g_i t_i^k) \in {\rm Y}_{d,n}(u)$.
By the induction hypothesis, the last term is equal to
$$
z\, \theta \left({\rm tr} (w_ng_{n-1} \ldots g_i t_i^k) \right) =\theta \left(z\,{\rm tr} (w_ng_{n-1} \ldots g_i t_i^k) \right) =
\theta \left({\rm tr} (w_ng_ng_{n-1} \ldots g_i t_i^k) \right).
$$
Finally, we have
 $$
 {\rm tr} \left(\varphi(w_nt_{n+1}^k) \right) ={\rm tr} \left({\rm x}_k \varphi(w_n) \right) ={\rm x}_k\,{\rm tr} \left( \varphi(w_n) \right).
$$
 By the induction hypothesis, the last term is equal to
$$
{\rm x}_k \theta \left({\rm tr} (w_n) \right) =\theta \left(x_k\,{\rm tr}(w_n) \right) = \theta({\rm tr} \left(w_nt_{n+1}^k) \right).
$$\end{proof}

\begin{rem}\label{ocnphi}\rm
The map $\varphi$ is an intermediate construction between the algebra and the trace map.
An analogue of the map $\varphi$ can be constructed on the Iwahori-Hecke algebra $\mathcal{H}_n(q)$.
\end{rem}

\begin{rem}\label{remphi}\rm
By virtue of Proposition~\ref{phi}, for a solution $X_{S} =\{{\rm x}_1,\ldots, {\rm x}_{d-1}\}$ of the {\rm E}-system, the invariant $\Delta_S$ is rewritten as:
$$
\Delta_S (\widehat{\alpha}) = D_\mathrm{Y}^{n-1}(\sqrt{\lambda_\mathrm{Y}})^{\epsilon(\alpha)} \left({\rm tr} \circ \varphi \circ \delta\right)(\alpha).
$$
Moreover, in view of the discussion in Subsection \ref{alghom}, and since $\varphi(g_i)=g_i$ and $\varphi(t_i^m)={\rm x}_m$,
 the map $\varphi$ of Proposition~\ref{phi} provides us with the earliest possible specialization of
$x_1, \ldots, x_{d-1}$ to $X_{S}$ during the construction of $\Delta_S$.
\end{rem}

Proposition~\ref{phi} implies that the specialized Juyumaya trace is uniquely determined by its values on the elements of the image of $\varphi$.
We will now show that  $\varphi({\rm Y}_{d,n}(u))$ is the subspace $W_n$ of ${\rm Y}_{d,n}(u)$ spanned by the elements
$\{g_{w}\}_{w \in \mathfrak{D}}$, where
$$\mathfrak{D}=\{s_{i_k}\ldots s_{i_2}s_{i_1} \,|\, i_1<i_2<\cdots<i_k\} \subset \mathfrak{S}_n.$$

\begin{prop}
Let $n \in \mathbb{N}$ and let $W_n$ be the ${\mathbb C}$-linear subspace of ${\rm Y}_{d,n}(u)$ spanned by the elements $\{g_{w}\}_{w \in \mathfrak{D}}$. Then $\varphi ( {\rm Y}_{d,n}(u)) = W_n$.
\end{prop}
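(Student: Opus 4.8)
The plan is to prove both inclusions simultaneously, by induction on $n$, by unwinding the recursive definition of $\varphi$ against the inductive basis (\ref{basis}). The case $n=0$ is clear, since ${\rm Y}_{d,0}(u)=\mathbb{C}=W_0$; so I would assume $\varphi({\rm Y}_{d,n}(u))=W_n$ and deduce the statement for $n+1$. Note also that $W_n\subseteq W_{n+1}$, since an element of $\mathfrak{D}$ using only $s_1,\dots,s_{n-1}$ still lies in $\mathfrak{D}$ when viewed inside $\mathfrak{S}_{n+1}$.

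Before the bookkeeping I would isolate the one genuinely combinatorial point, namely that $g_nW_n\subseteq W_{n+1}$. Given $w=s_{i_k}\cdots s_{i_1}\in\mathfrak{D}$ with $i_1<\cdots<i_k\leq n-1$, the permutation $w$ fixes $n+1$, so $w^{-1}(n)\leq n<n+1=w^{-1}(n+1)$ and hence $\ell(s_nw)=\ell(w)+1$; thus $s_ns_{i_k}\cdots s_{i_1}$ is a reduced word for $s_nw$, so $g_{s_nw}=g_ng_w$ (here I use that $g_v$ is well defined independently of the chosen reduced word for $v$, since the $g_i$ satisfy the braid relations, cf.\ (\ref{multilength})). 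Since moreover $i_1<\cdots<i_k<n$, the element $s_nw=s_ns_{i_k}\cdots s_{i_1}$ again lies in $\mathfrak{D}$, so $g_ng_w=g_{s_nw}\in W_{n+1}$; as the $g_w$ with $w\in\mathfrak{D}$ span $W_n$, this gives $g_nW_n\subseteq W_{n+1}$. I would also record the trivial fact that $\varphi(vg_n)=g_n\varphi(v)$ for every $v\in{\rm Y}_{d,n}(u)$: expanding $v$ in the inductive basis of ${\rm Y}_{d,n}(u)$, each basis element $w_n$ makes $w_ng_n$ an inductive basis element of ${\rm Y}_{d,n+1}(u)$ --- the case $i=n$, $k=0$ of the first type in (\ref{basis}) --- so the defining rule of $\varphi$ gives $\varphi(w_ng_n)=g_n\varphi(w_n)$, and linearity finishes it.

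With these in hand the inclusion $\varphi({\rm Y}_{d,n+1}(u))\subseteq W_{n+1}$ follows by evaluating $\varphi$ on the inductive basis of ${\rm Y}_{d,n+1}(u)$: on $w_nt_{n+1}^k$ one gets ${\rm x}_k\varphi(w_n)\in W_n\subseteq W_{n+1}$ by the induction hypothesis, and on $w_ng_ng_{n-1}\cdots g_it_i^k$ one gets $g_n\varphi(w_ng_{n-1}\cdots g_it_i^k)$ with $w_ng_{n-1}\cdots g_it_i^k\in{\rm Y}_{d,n}(u)$, whose $\varphi$-image lies in $W_n$ by the induction hypothesis, so the whole expression lies in $g_nW_n\subseteq W_{n+1}$ by the first observation. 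For the reverse inclusion I would take $w'\in\mathfrak{D}\subset\mathfrak{S}_{n+1}$: if $w'$ involves no $s_n$ then $g_{w'}\in W_n=\varphi({\rm Y}_{d,n}(u))\subseteq\varphi({\rm Y}_{d,n+1}(u))$ using the tower (\ref{tower}); otherwise $w'=s_nw$ with $w\in\mathfrak{D}\cap\mathfrak{S}_n$ and $g_{w'}=g_ng_w$ as above, and since $g_w\in W_n=\varphi({\rm Y}_{d,n}(u))$ one can pick $v\in{\rm Y}_{d,n}(u)$ with $\varphi(v)=g_w$ and conclude $g_{w'}=g_n\varphi(v)=\varphi(vg_n)\in\varphi({\rm Y}_{d,n+1}(u))$ by the second observation. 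Hence $W_{n+1}\subseteq\varphi({\rm Y}_{d,n+1}(u))$, closing the induction.

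The step I expect to carry the weight is the combinatorial lemma $g_nW_n\subseteq W_{n+1}$ --- really the assertion that prepending $s_n$ to the distinguished ``descending-run'' reduced word of an element of $\mathfrak{D}$ keeps it reduced and keeps it in $\mathfrak{D}$. Everything else is formal: unwinding the definition of $\varphi$ on the inductive basis, being careful at each stage about which element of ${\rm Y}_{d,n}(u)$ is the argument of $\varphi$, and invoking the standard independence of $g_v$ from the chosen reduced word for $v$.
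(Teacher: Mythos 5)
Your proof is correct, and its forward inclusion $\varphi({\rm Y}_{d,n+1}(u))\subseteq W_{n+1}$ is exactly the paper's argument: induction on $n$, evaluating $\varphi$ on the inductive basis (\ref{basis}) and using $W_n\subseteq W_{n+1}$ and $g_nW_n\subseteq W_{n+1}$. Where you differ is in the two points the paper treats as immediate. First, the paper simply asserts $g_nW_n\subseteq W_{n+1}$, whereas you prove it, via the criterion $w^{-1}(n)<w^{-1}(n+1)=n+1$ giving $\ell(s_nw)=\ell(s_n)+\ell(w)$, hence $g_ng_w=g_{s_nw}$ by (\ref{multilength}) and $s_nw\in\mathfrak{D}$; this is a genuine (if small) gap-filling, and note that for this step you only need the length additivity, not the claim that $s_ns_{i_k}\cdots s_{i_1}$ is itself reduced (which is true, the $\mathfrak{D}$-words being Coxeter elements of parabolics, but you use it only decoratively). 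Second, for the reverse inclusion the paper argues in one line that $\varphi(g_w)=g_w$ for $w\in\mathfrak{D}$, which implicitly requires rewriting $g_w$ in the form (\ref{basis}) (commuting the low-index generators past the top decreasing run) before the defining rules can be applied; you instead run an induction showing each $g_{w'}$, $w'\in\mathfrak{D}\subset\mathfrak{S}_{n+1}$, lies in the image, using surjectivity onto $W_n$ at the previous level together with your observation $\varphi(vg_n)=g_n\varphi(v)$ (the $i=n$, $k=0$ instance of the defining rule) and the tower (\ref{tower}). Your route is slightly longer but more self-contained, since it never needs to know how $g_w$ expands in the inductive basis nor that $\varphi$ fixes it; the paper's route is shorter but leaves that verification to the reader. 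Both are valid.
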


\begin{proof}
First  note that we have $W_n \subset W_{n+1}$ and $g_nW_n \subset W_{n+1}$.
Note also that $\varphi(1)=1 \in W_n$.

We will first show that $\varphi( {\rm Y}_{d,n}(u) )\subseteq W_n$. We will proceed by induction on $n$.
Let $n=1$. Then $\varphi(t_1^k)={\rm x}_k \cdot 1 \in W_1$, for all $k \in  \mathbb{Z}/d\mathbb{Z}$.
Now assume that $\varphi( {\rm Y}_{d,n}(u) )\subseteq W_n$. In order to show that $\varphi( {\rm Y}_{d,n+1}(u) )\subseteq W_{n+1}$, it is enough to show that the images of the elements of the inductive basis of ${\rm Y}_{d,n+1}(u)$ under $\varphi$ are contained in $W_{n+1}$.
Let $w_{n} \in {\rm Y}_{d,n}(u)$ and  $k \in  \mathbb{Z}/d\mathbb{Z}$. We have
$$
\varphi(w_ng_ng_{n-1} \ldots g_i t_i^k)  = g_n \varphi(w_ng_{n-1} \ldots g_i t_i^k) \in g_nW_n \subset W_{n+1}
$$
and
$$
\varphi(w_nt_{n+1}^k) =  {\rm x}_k \varphi(w_n) \in W_n \subset W_{n+1}.
$$
We conclude that $\varphi( {\rm Y}_{d,n+1}(u) )\subseteq W_{n+1}$.

On the other hand, let $w \in \mathfrak{D}$. Then $g_{w}=\varphi(g_{w})$, and so $W_n \subseteq \varphi( {\rm Y}_{d,n}(u) )$.
\end{proof}

Now let $w\in \mathfrak{D}$. Following the discussion in \S\ref{conj}, $w$ has minimal length in its conjugacy class in $\mathfrak{S}_n$. Suppose that the conjugacy class of $w$ is parametrized by the partition $\mu$ of $n$.  By {Theorem~\ref{Cmin},}  the elements $w$ and $w_\mu$ are strongly conjugate, that is,
there exists a finite sequence $w=w_0,w_1,\ldots,w_r=w_\mu$ such that, for all $i=0,1,\ldots,r-1$,
$$\ell(w_{i})=\ell(w_{i+1}),\,\,\, w_{i+1}=x_iw_{i}x_i^{-1} \,\, \text{ and }\,\,  \ell(x_iw_i)=\ell(x_i)+\ell(w_i)\,\, \text{ or }\,\, \ell(w_ix_i^{-1})=\ell(w_i)+\ell(x_i^{-1})$$
for some elements $x_i \in \mathfrak{S}_n$.
If $\ell(x_iw_i)=\ell(x_i)+\ell(w_i)$, then we have 
$$\ell(w_{i+1}x_i)=\ell(x_iw_i)=\ell(x_i)+\ell(w_i)=\ell(w_{i+1})+\ell(x_i).$$
Following (\ref{multilength}), since $ w_{i+1}x_i=x_iw_{i}$ we deduce that
$$g_{w_{i+1}}g_{x_i}=g_{x_i}g_{w_{i}},$$
and thus,
$$g_{w_{i+1}}=g_{x_i}g_{w_{i}}g_{x_i}^{-1}.$$
On the other hand, if $\ell(w_ix_i^{-1})=\ell(w_i)+\ell(x_i^{-1})$, then we have
$$\ell(x_i^{-1}w_{i+1})=\ell(w_ix_i^{-1})=\ell(w_i)+\ell(x_i^{-1})=\ell(x_i^{-1})+\ell(w_{i+1}).$$
Following again (\ref{multilength}), since $x_i^{-1}w_{i+1}=w_{i}x_i^{-1}$,  we deduce that
$$g_{x_i^{-1}}g_{w_{i+1}}=g_{w_{i}}g_{x_i^{-1}},$$
and thus,
$$g_{w_{i+1}}=g_{x_i^{-1}}^{-1}g_{w_{i}}g_{x_i^{-1}}.$$
In any case, the elements $g_{w_{i+1}}$ and $g_{w_{i}}$ are conjugate in ${\rm Y}_{d,n}(u)$, and so, if $\psi$ is any trace function on ${\rm Y}_{d,n}(u)$, we have
$$\psi\left (g_{w_{i+1}} \right)=
\psi\left (g_{w_{i}} \right)$$
for all $i=1,\ldots,r$.
Thus, we have
$${\psi}\left (g_{w} \right)=
{\psi}\left (g_{w_{\mu}} \right).$$
In particular, we obtain that
$${\rm tr}\left (g_{w} \right)=
{\rm tr}\left (g_{w_{\mu}} \right).$$
From the above, we conclude the following:

\begin{prop}\label{simwitocn}
The specialized Juyumaya trace $\theta \circ  {\rm tr}$ on  ${\rm Y}_{d,n}(u)$  is uniquely determined by its values on the elements
$g_{w_\mu}$, where $\mu$ runs over the partitions of $n$.
\end{prop}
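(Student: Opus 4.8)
The plan is to assemble three facts that are already in place: Proposition~\ref{phi}, which gives $\theta\circ{\rm tr}={\rm tr}\circ\varphi$; the preceding proposition, which identifies $\varphi({\rm Y}_{d,n}(u))$ with the subspace $W_n$ spanned by $\{g_w\}_{w\in\mathfrak{D}}$; and the strong-conjugacy computation carried out immediately above, which shows that for every $w\in\mathfrak{D}$ with cycle type $\mu$ the elements $g_w$ and $g_{w_\mu}$ are conjugate in ${\rm Y}_{d,n}(u)$.

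First I would unwind what ``uniquely determined'' should mean. For an arbitrary $a\in{\rm Y}_{d,n}(u)$, Proposition~\ref{phi} gives $(\theta\circ{\rm tr})(a)={\rm tr}\bigl(\varphi(a)\bigr)$, and $\varphi(a)$ lies in $W_n$, so it can be written as a linear combination of the $g_w$, $w\in\mathfrak{D}$, with coefficients manufactured from $a$ and from the explicitly defined map $\varphi$ alone. Hence $\theta\circ{\rm tr}$ is recovered from the restriction ${\rm tr}|_{W_n}$, i.e. from the scalars ${\rm tr}(g_w)$, $w\in\mathfrak{D}$.

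Next I would reduce those scalars to the required ones. Since each $w\in\mathfrak{D}$ has minimal length in its conjugacy class (as noted in \S\ref{conj}), Theorem~\ref{Cmin} and the relation (\ref{multilength}) imply, as spelled out just before the statement, that $g_w$ is conjugate to $g_{w_\mu}$, whence ${\rm tr}(g_w)={\rm tr}(g_{w_\mu})=z^{\ell(w_\mu)}$ by (\ref{spju2}). Therefore $\theta\circ{\rm tr}$ is completely determined by the family $\{{\rm tr}(g_{w_\mu})\}_\mu=\{z^{\ell(w_\mu)}\}_\mu$, which is the assertion. For the more usable reformulation announced in the text --- if $\psi$ is any trace function on ${\rm Y}_{d,n}(u)$ with $\psi(g_{w_\mu})=z^{\ell(w_\mu)}$ for all partitions $\mu$ of $n$, then $\psi\circ\varphi=\theta\circ{\rm tr}$ --- I would run the same conjugacy argument with $\psi$ in place of ${\rm tr}$, obtaining $\psi(g_w)=\psi(g_{w_\mu})={\rm tr}(g_w)$ for all $w\in\mathfrak{D}$, so that $\psi$ and ${\rm tr}$ agree on $W_n=\varphi({\rm Y}_{d,n}(u))$; composing with $\varphi$ then yields $\psi\circ\varphi={\rm tr}\circ\varphi=\theta\circ{\rm tr}$.

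I do not expect a genuine obstacle: the substantive work --- constructing $\varphi$, computing its image, and invoking the Geck--Pfeiffer strong-conjugacy theorem --- has been done above, and what remains is bookkeeping. The one point worth a sentence of care is that the coefficients expressing $\varphi(a)$ in the spanning set $\{g_w\}_{w\in\mathfrak{D}}$ carry no hidden dependence on ${\rm tr}$ beyond the fixed specialization $x_m\mapsto{\rm x}_m$ built into $\varphi$; this is immediate from the inductive definition of $\varphi$, in which only the constants ${\rm x}_k$ and the braiding generators $g_i$ appear.
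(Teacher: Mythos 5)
Your proposal is correct and follows essentially the same route as the paper: it assembles Proposition~\ref{phi}, the identification $\varphi({\rm Y}_{d,n}(u))=W_n$, and the strong-conjugacy argument via Theorem~\ref{Cmin} and (\ref{multilength}) to reduce the values ${\rm tr}(g_w)$, $w\in\mathfrak{D}$, to the values ${\rm tr}(g_{w_\mu})=z^{\ell(w_\mu)}$, exactly as in the discussion preceding the proposition. Your closing remark on the trace function $\psi$ is also the intended reading, and if anything states it a bit more carefully than the text.
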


\subsection{\it Consequences on the case $E=1$}\label{consequencesE=1}

Suppose now that $X_S=\{{\rm x}_1,\ldots, {\rm x}_{d-1}\}$ is a solution of the {\rm E}-system such that $E=1$, that is,
${\rm x}_1$ is a $d$-th root of unity and ${\rm x}_m={\rm x}_1^m$ $(1\leq m \leq d-1)$. In this case, we can define the algebra epimorphism
$$
\begin{array}{rccc}
\gamma :   \ {\rm Y}_{d,n}(u) & \longrightarrow & \mathcal{H}_n(u) & \\
 g_i  &  \mapsto & G_i &  \\
 t_i^m   & \mapsto & {\rm x}_m &(1 \leq m \leq d-1).
\end{array}
$$
This is indeed an algebra homomorphism, since it respects all the defining relations of the algebra ${\rm Y}_{d,n}(u)$. In particular:
$$
\gamma(g_i^2)=\gamma\left(1 + (u-1) \, e_{i} + (u-1) \, e_{i} \, g_i\right)=u+(u-1)G_i=G_i^2=\gamma(g_i)^2
$$
and
$$\gamma(t_i^kt_i^l)=\gamma(t_i^{k+l})={\rm x}_{k+l} \stackrel{\rm E}{=}{\rm x}_{k}{\rm x}_{l}=\gamma(t_i^k)\gamma(t_i^l)\,\,\,\,\text{for}\, k,l \in \mathbb{Z}/d\mathbb{Z}.$$
The map $\gamma$ is also clearly surjective.

\begin{rem}
{\rm Note that the map $\gamma$ is not an algebra homomorphism if $E \neq 1$, because
neither of the above equalities holds.}
\end{rem}

Now consider the Ocneanu trace on $\mathcal{H}_n(u)$ with parameter $\zeta=z$. The composition $\tau \circ \gamma$ is a Markov trace on
${\rm Y}_{d,n}(u)$ which takes the same values as the specialized Juyumaya trace ${\rm tr}_{_S}$ on the elements $g_{w_\mu}$, where $\mu$ runs over the partitions of $n$. By Proposition \ref{simwitocn}, we obtain
\begin{equation}\label{firstcon}
 \tau \circ \gamma =  {\rm tr}_{_S}.
\end{equation}
The following result is a consequence of (\ref{firstcon}).

\begin{prop}\label{firstconprop}
Let $X_S$ be a solution of the {\rm E}-system such that $E=1$. Let ${\rm tr}_{_S}$ be the corresponding specialized Juyumaya trace on ${\rm Y}_{d,n}(u)$ with parameter $z$, and let
$\tau$ be the Ocneanu trace on $\mathcal{H}_{n}(q)$ with parameter $\zeta$. If we take $u=q$ and $z = \zeta$, then
$$ (\tau \circ \pi)(\alpha)= ( {\rm tr}_{_S} \circ \delta)(\alpha) \quad{(\a \in B_n)}$$
for all $n \in \mathbb{N}$.
\end{prop}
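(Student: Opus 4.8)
The plan is to derive this immediately from the identity $\tau \circ \gamma = {\rm tr}_{_S}$ established in~(\ref{firstcon}), together with the fact that both $\pi$ and $\delta$ are the ``natural'' maps sending the braid generator $\sigma_i$ to $G_i$, respectively $g_i$. The key observation is that the algebra epimorphism $\gamma:{\rm Y}_{d,n}(u)\to\mathcal{H}_n(u)$ sends $g_i\mapsto G_i$, so that under the hypothesis $u=q$ we have a commuting triangle of maps out of ${\mathbb C}B_n$: namely $\gamma\circ\delta=\pi$, since both sides are algebra homomorphisms ${\mathbb C}B_n\to\mathcal{H}_n(q)$ agreeing on the generators $\sigma_i$ (both send $\sigma_i$ to $G_i$), and ${\mathbb C}B_n$ is generated by the $\sigma_i$.

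First I would record $\gamma\circ\delta=\pi$ as a one-line verification on generators. Then, for any $\alpha\in B_n$, I would compute
$$
(\tau\circ\pi)(\alpha)=(\tau\circ\gamma\circ\delta)(\alpha)=\big((\tau\circ\gamma)\circ\delta\big)(\alpha)=({\rm tr}_{_S}\circ\delta)(\alpha),
$$
where the last equality uses~(\ref{firstcon}) with the identification of parameters $\zeta=z$ (which is exactly the hypothesis $z=\zeta$, needed because $\tau$ in~(\ref{firstcon}) was taken with parameter $\zeta=z$). This is the whole argument; it is essentially bookkeeping once~(\ref{firstcon}) is in hand.

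There is really no serious obstacle here: the substance of the matter was already dispatched in Proposition~\ref{simwitocn} and equation~(\ref{firstcon}). The only points requiring a modicum of care are (i) being explicit that $u=q$ is what makes $\gamma$ land in the \emph{same} Iwahori--Hecke algebra $\mathcal{H}_n(q)$ that $\pi$ targets, so that the composition $\tau\circ\gamma$ makes sense and matches the $\tau$ appearing in the definition of $P$; and (ii) being explicit that $z=\zeta$ aligns the trace parameters, so that the $\tau$ of~(\ref{firstcon}) (taken with $\zeta=z$) is literally the Ocneanu trace with parameter $\zeta$ from the statement. Neither is hard, but both should be stated. I would also remark — though it is immediate — that the equality holds at the level of the braid group $B_n$ for every $n$ simultaneously, consistently with the inductive (tower) structure, so no compatibility check across different $n$ is needed beyond what~(\ref{firstcon}) already guarantees.
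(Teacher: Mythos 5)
Your argument is exactly the paper's: the paper's proof also observes that $\gamma\circ\delta=\pi$ (immediate on the generators $\sigma_i$, using $u=q$) and then applies $\tau\circ\gamma={\rm tr}_{_S}$ from~(\ref{firstcon}) with the parameter identification $\zeta=z$. Correct, and essentially identical in approach; your extra remarks on parameter alignment are just the same bookkeeping made explicit.
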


\begin{proof} Let $\alpha \in B_n$. By definition of the map $\gamma$, we have $(\gamma \circ \delta)(\a)=\pi(\a)$.
 Now Equation (\ref{firstcon}) yields the desired result.
\end{proof}

Under the assumptions of Proposition \ref{firstconprop}, we automatically obtain $\lambda_\mathcal{H}=\lambda_{\rm Y}$. We conclude the following.
\begin{cor}\label{firstcompinv}
Let $X_S$ be a solution of the {\rm E}-system such that $E=1$. Let ${\rm tr}_{_S}$ be the corresponding specialized Juyumaya trace on ${\rm Y}_{d,n}(u)$ with parameter $z$, and let
$\tau$ be the Ocneanu trace on $\mathcal{H}_{n}(q)$ with parameter $\zeta$. If we take $q=u$ and $\zeta=z$, then
$$ P(\widehat{\alpha})= \Delta_S(\widehat{\alpha}) \quad{(\a \in B_n)}$$
for all $n \in \mathbb{N}$.
\end{cor}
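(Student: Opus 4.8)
The plan is to deduce this directly from Proposition~\ref{firstconprop} by comparing the two defining formulas for the invariants factor by factor. Recall that for $\alpha\in B_n$ we have
$$
P(\widehat{\alpha}) = (D_{\mathcal{H}})^{n-1}(\sqrt{\lambda_{\mathcal{H}}})^{\epsilon(\alpha)}\,(\tau\circ\pi)(\alpha)
\quad\text{and}\quad
\Delta_S(\widehat{\alpha}) = D_\mathrm{Y}^{\,n-1}(\sqrt{\lambda_\mathrm{Y}})^{\epsilon(\alpha)}\,({\rm tr}_{_S}\circ\delta)(\alpha).
$$
The three ingredients here are: (i) the exponent sum $\epsilon(\alpha)$ and the strand number $n$, which are features of the braid word $\alpha$ alone and hence literally the same quantities in both formulas; (ii) the trace values $(\tau\circ\pi)(\alpha)$ and $({\rm tr}_{_S}\circ\delta)(\alpha)$; and (iii) the normalization scalars $\lambda_{\mathcal{H}},D_{\mathcal{H}}$ versus $\lambda_\mathrm{Y},D_\mathrm{Y}$. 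So it suffices to match (ii) and (iii) under the hypotheses $E=1$, $q=u$, $\zeta=z$.

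For (iii), substituting $E=1$, $q=u$, $\zeta=z$ into the definitions of the rescaling factors gives
$$
\lambda_\mathrm{Y}=\frac{z+(1-u)E}{uz}=\frac{\zeta+(1-q)}{q\zeta}=\lambda_{\mathcal{H}},
$$
and therefore also $D_\mathrm{Y}=\dfrac{1}{z\sqrt{\lambda_\mathrm{Y}}}=\dfrac{1}{\zeta\sqrt{\lambda_{\mathcal{H}}}}=D_{\mathcal{H}}$ (this is precisely the observation recorded just before the corollary; note $\sqrt{\lambda_\mathrm{Y}}$ and $\sqrt{\lambda_{\mathcal{H}}}$ are then the same symbol, so no choice-of-branch issue arises). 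For (ii), Proposition~\ref{firstconprop} — itself a consequence of the identification $\tau\circ\gamma={\rm tr}_{_S}$ coming from the ``same values on the $g_{w_\mu}$'' characterization of Proposition~\ref{simwitocn} — yields exactly $(\tau\circ\pi)(\alpha)=({\rm tr}_{_S}\circ\delta)(\alpha)$ for every $\alpha\in B_n$, under the very hypotheses $E=1$, $q=u$, $\zeta=z$.

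Combining (i), (ii) and (iii), the right-hand sides of the two displayed formulas agree term by term, so $P(\widehat{\alpha})=\Delta_S(\widehat{\alpha})$ for all $n\in\mathbb{N}$ and all $\alpha\in B_n$. I do not anticipate any genuine obstacle: all the substantive work has already been carried out in Propositions~\ref{simwitocn} and~\ref{firstconprop}, and this corollary merely transports the trace identity to the level of the normalized link invariants, which only requires the one-line verification that the normalizing constants coincide under the given specialization. The only point worth stating explicitly in the write-up is that $\epsilon(\alpha)$ and $n$ are intrinsic to the braid word and thus contribute identically to both $P$ and $\Delta_S$.
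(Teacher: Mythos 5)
Your argument is correct and is exactly the paper's proof: Proposition~\ref{firstconprop} gives $(\tau\circ\pi)(\alpha)=({\rm tr}_{_S}\circ\delta)(\alpha)$, and the substitution $E=1$, $q=u$, $\zeta=z$ makes $\lambda_{\mathcal{H}}=\lambda_{\rm Y}$ (hence $D_{\mathcal{H}}=D_{\rm Y}$), so the normalized invariants agree term by term. The paper records this in one line; your write-up simply spells out the same factor-by-factor comparison.
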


Since the map $P$ is invariant under the Hecke algebra automorphism (\ref{heckeauto}), we also obtain the following.
\begin{cor}\label{secondcompinv}
Let $X_S$ be a solution of the {\rm E}-system such that $E=1$. Let ${\rm tr}_{_S}$ be the corresponding specialized Juyumaya trace on ${\rm Y}_{d,n}(u)$ with parameter $z$, and let
$\tau$ be the Ocneanu trace on $\mathcal{H}_{n}(q)$ with parameter $\zeta$. If we take $q=1/u$ and $ \zeta=-z/u$, then
$$ P(\widehat{\alpha})= \Delta_S(\widehat{\alpha}) \quad{(\a \in B_n)}$$
for all $n \in \mathbb{N}$.
\end{cor}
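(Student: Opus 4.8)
The plan is to obtain Corollary~\ref{secondcompinv} as a formal consequence of Corollary~\ref{firstcompinv} together with the invariance of the HOMFLYPT polynomial under the Iwahori--Hecke algebra automorphism~(\ref{heckeauto}), which was recorded in Subsection~\ref{homfly-pt}. The key observation is that, although the parameter substitutions $(q,\zeta)=(u,z)$ of Corollary~\ref{firstcompinv} and $(q,\zeta)=(1/u,-z/u)$ of the present statement look different, they yield the \emph{same} function $P$ on links; so, fixing $u$ and $z$ (hence fixing $\Delta_S$), it suffices to relate these two specializations of $P$ to each other.

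First I would make precise what the invariance of $P$ under~(\ref{heckeauto}) gives. Set $G_i':=-q^{-1}G_i$. As already noted in Subsection~\ref{homfly-pt}, the $G_i'$ satisfy $(G_i')^2=(q^{-1}-1)G_i'+q^{-1}$, i.e.\ the Hecke relation with parameter $q^{-1}$, and $\tau(G_i')=-q^{-1}\zeta$. Since the $G_i'$ generate $\mathcal{H}_n(q)$ and $\tau(h'G_n')=-q^{-1}\tau(h'G_n)=-q^{-1}\zeta\,\tau(h')$ for $h'$ in the subalgebra generated by $G_1',\dots,G_{n-1}'$, the uniqueness part of Theorem~\ref{otrace} identifies $\tau$, read through the generators $G_i'$, with the Ocneanu trace on $\mathcal{H}_n(q^{-1})$ with parameter $-q^{-1}\zeta$. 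Hence the map $P'$ of Subsection~\ref{homfly-pt} is exactly the HOMFLYPT invariant built as in Definition~\ref{homfly-pt def} from $\mathcal{H}_n(q^{-1})$ with Ocneanu parameter $-q^{-1}\zeta$, and the equality $P'=P$ recorded there reads
\[
P_{(q,\zeta)}(\widehat{\alpha}) \;=\; P_{(q^{-1},\,-q^{-1}\zeta)}(\widehat{\alpha})\qquad (\alpha\in B_n,\ n\in\mathbb{N}),
\]
where the subscript records the pair (Hecke parameter, Ocneanu parameter). Specializing $q=u$, $\zeta=z$ and combining with Corollary~\ref{firstcompinv}, which says $P_{(u,z)}(\widehat{\alpha})=\Delta_S(\widehat{\alpha})$, we get $P_{(1/u,\,-z/u)}(\widehat{\alpha})=P_{(u,z)}(\widehat{\alpha})=\Delta_S(\widehat{\alpha})$ for all braids $\alpha$, which is the assertion of the corollary.

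I do not expect any real obstacle: the corollary is purely formal given the two inputs above. The only point needing a little care is the bookkeeping in the middle paragraph, namely verifying that the change of generators $G_i\mapsto -q^{-1}G_i$ sends the Ocneanu trace with parameter $\zeta$ to the Ocneanu trace with parameter $-q^{-1}\zeta$ (and not merely to some other Markov trace) and that the normalization constants $\lambda_{\mathcal H}$, $D_{\mathcal H}$ of Definition~\ref{homfly-pt def} transform accordingly so that $P'=P$ on the nose; both are handled by the uniqueness statement of Theorem~\ref{otrace} and by the fact, already used in Subsection~\ref{homfly-pt}, that $P$ is a genuine, presentation-independent link invariant.
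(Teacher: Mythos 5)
Your proposal is correct and follows essentially the same route as the paper: the paper derives Corollary~\ref{secondcompinv} directly from Corollary~\ref{firstcompinv} together with the invariance of $P$ under the automorphism~(\ref{heckeauto}), which is exactly your argument, only stated more briefly. Your extra bookkeeping (identifying the transformed trace as the Ocneanu trace on $\mathcal{H}_n(q^{-1})$ with parameter $-q^{-1}\zeta$ via the uniqueness in Theorem~\ref{otrace}) is a faithful expansion of what the paper leaves implicit.
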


In the next sections we will explore the remaining cases where the maps $P$ and $\Delta_S$ coincide, and show that they are all trivial, that is, either $u=1$ or $q=1$ or $E=1$.

\section{Comparing  $P$ and $\Delta_S$}

From now on, let $X_S=\{{\rm x}_1,\ldots,{\rm x}_{d-1}\}$ be a solution of the E-system parametrized by a subset $S$ of $\mathbb{Z}/d\mathbb{Z}$.
Let ${\rm tr}_{_S}$ be the corresponding specialized Juyumaya trace on ${\rm Y}_{d,n}(u)$ with parameter $z$, and let
$E={\rm tr}_{_S}(e_i)=1/|S|$. Let $\tau$ be the Ocneanu trace on $\mathcal{H}_{n}(q)$ with parameter $\zeta$.
In this section, we will assume that the maps $P$ and $\Delta_S$ coincide, and we will see what restrictions this assumption imposes on the values that $q$, $\zeta$, $u$, $z$ and $E$ can take.

\subsection{\it Some equalities}\label{Some equalities}

First of all, if $P$ and $\Delta_S$ coincide, they should take the same value on the closure of any braid in any $B_n$. In particular for the identity braid $1$ in each $B_n$ we have:
$$
P(\,\widehat{1}\,)=D_{\mathcal{H}}^{n-1}=D_{\rm Y}^{n-1}=\Delta_S(\,\widehat{1}\,)
$$
for all $n \in \mathbb{N}$, whence we deduce that
\begin{equation}\label{dprimed}
D_{\mathcal{H}}=D_{\rm Y}.
\end{equation}
From~(\ref{dprimed}) we obtain the following equality:
\begin{equation}\label{(1a)}
{(u\zeta+z^2-uEz+Ez)} \, q={u\zeta(\zeta+1)}.
\end{equation}
If $\zeta=-1$, then we must have $u\zeta+z^2-uEz+Ez=0$. If $\zeta \neq -1$, then (\ref{(1a)}) yields the following equality for $q$:
\begin{equation}\label{(1)}
q=\frac{u\zeta^2+u\zeta}{u\zeta+z^2-uEz+Ez}\,\,.
\end{equation}

Now if  $P=\Delta_S$ and $D_{\mathcal{H}}=D_{\rm Y}$, we must have
\begin{equation}\label{ratio}
\frac{\tau(\pi(\alpha))}{{\rm tr}_{_S}(\delta(\alpha))}=\left(\sqrt{\frac{\lambda_{\rm Y}}{\lambda_{\mathcal{H}}}}\right)^{\epsilon(\alpha)}=\left(\frac{\zeta}{z}\right)^{\epsilon(\alpha)}
\end{equation}
for all $\alpha\in B_n$ and for all $n \in \mathbb{N}$.
Taking $\alpha=\sigma_1^2 \in B_n$ and  $\a=\sigma_1^3 \in B_n$, for $n\geq 2$, we obtain respectively:
\begin{equation}\label{(2a)}
{(z^2\zeta+z^2)} \, q=\zeta({b\zeta+z^2}),
\end{equation}
where
$$b:={\rm tr}_{_S}(g_1^2)=1+(u-1)E+(u-1)z,$$
and
\begin{equation}\label{(3a)}
{(bz\zeta+z^3)}\, q=\zeta({c\zeta+bz}),
\end{equation}
where
$$c:={\rm tr}_{_S}(g_1^3)=(u^2-u)E+(u^2-u+1)z.$$
Note that Equation \ref{(2a)} implies that $\zeta=-1$ if and only if $b\zeta+z^2=0$.

From now on, let us assume that $\zeta \neq -1$. Then
(\ref{(2a)}) and (\ref{(3a)}) yield respectively the following equalities for $q$:
\begin{equation}\label{(2)}
q=\frac{b\zeta^2+z^2\zeta}{z^2\zeta+z^2}
\end{equation}
and
\begin{equation}\label{(3)}
q=\frac{c\zeta^2+bz\zeta}{bz\zeta+z^3}\,\,.
\end{equation}
Suppose first that $z^2 \neq b$. Combining (\ref{(1)}) and (\ref{(2)}) yields:
\begin{equation}\label{(5)}
\zeta^2=\frac{bz^2-ubEz+bEz-uz^2}{uz^2-ub} \zeta + \frac{z^4-uEz^3+Ez^3-uz^2}{uz^2-ub}.
\end{equation}
Suppose now that $bz \neq c$. Combining (\ref{(1)}) and (\ref{(3)}) yields:
\begin{equation}\label{(7)}
\zeta^2=\frac{cz^2-ucEz+cEz-uz^3}{ubz-uc} \zeta + \frac{bz^3-ubEz^2+bEz^2-uz^3}{ubz-uc}.
\end{equation}
Combining (\ref{(5)}) and (\ref{(7)}) yields $\zeta=-1$, which contradicts our assumption, unless
$$u=1 \,\,\,\text{or}\,\,\, E=1 \,\,\,\text{or}\,\,\,z=\frac{1-E+u+uE}{2}.$$
We conclude that the only cases where the invariants  $P$ and $\Delta_S$ may coincide are the following:
\begin{enumerate}[(1)]
\item $\zeta=-1$;
\item $\zeta \neq -1$, $z^2=b$\,;
\item $\zeta \neq -1$, $bz=c$\,;
\item $\zeta \neq -1$, $u=1$\,;
\item $\zeta \neq -1$, $E=1$\,;
\item  $\zeta \neq -1$, $z=(1-E+u+uE)/2$\,.
\end{enumerate}

\subsection{\it The case $\zeta = -1$}

If $\zeta=-1$, then Equations \ref{(1a)} and \ref{(2a)} imply that
$$z^2=uEz-Ez+u \,\,\,\text{and}\,\,\,z^2=b=1+(u-1)E+(u-1)z.$$
Combining the two equalities above yields
$$(u-1)(E+z)=(u-1)(Ez+1)$$
which is true only if $u=1$ or $E=1$ or $z=1$.

If $u=1$, then $z=-1$ or $z=1$. If $E=1$, then $z=-1$ or $z=u$. If $z=1$ and $u \neq 1$, then $E=-1$ which is absurd.

\subsection{\it The case $\zeta \neq -1$, $z^2=b$ }\label{z^2=b}

If $z^2=b=1+(u-1)E+(u-1)z$, then Equation~\ref{(2)} yields $q=\zeta$.
Replacing $q=\zeta$ in (\ref{(1)}), we obtain that $z^2=uEz-Ez+u$. As in the previous subsection, we conclude that $u=1$ or $E=1$ or $z=1$.

If $u=1$, then $z=-1$ or $z=1$. If $E=1$, then $z=-1$ or $z=u$. If $z=1$ and $u \neq 1$, then $E=-1$ which is absurd.

\subsection{\it The case $\zeta \neq -1$, $bz=c$ } We have $$bz=z+uEz-Ez+uz^2-z^2=u^2z-uz+z+u^2E-uE=c$$
which yields
$$z(u-1)(E+z)=u(u-1)(E+z).$$

In the next two subsections, we will see what happens when $u=1$ and $E=1$. Thus, for the moment we may assume that $u\neq 1$ and $E\neq 1$.

If $z=-E$, then $b=1$, and combining Equations~\ref{(1)} and \ref{(2)} yields $\zeta= \pm E$ and $q=1$.
If $z \neq -E$, then $z=u$, and combining  Equations~\ref{(1)} and \ref{(3)} yields a contradicition.

\subsection{\it The case  $\zeta \neq -1$, $u=1$}

If $\zeta \neq -1$ and $u=1$, Equation~\ref{(1)} becomes
\begin{equation}\label{group1}
q=\frac{\zeta^2+\zeta}{\zeta+z^2}.\end{equation}
Moreover, we have $b=1$, so
Equation~\ref{(2)} becomes
\begin{equation}\label{group2}
q=\frac{\zeta^2+z^2\zeta}{z^2\zeta+z^2}.\end{equation}
Combining Equations~\ref{group1} and \ref{group2} gives us
$$\zeta^2(z^2-1)=z^2(z^2-1),$$
which holds only if $\zeta=z$ or $\zeta=-z$ or $z=1$ or $z=-1$.

If $\zeta= \pm z$, then $q=1$. If  $z=\pm 1$, then $q=\zeta$.

\subsection{\it The case  $\zeta \neq -1$, $E=1$}

Suppose that  $z^2 \neq b$ (the case $z^2=b$ has been completely covered in \S\ref{z^2=b}). Then Equation~\ref{(5)} becomes:
\begin{equation}\label{quad1}
\zeta^2=\frac{zu-z}{u} \zeta + \frac{z^2}{u}.
\end{equation}
The above quadratic equation has two solutions: $\zeta=z$ or $\zeta=-z/u$.
If $\zeta=z$, then Equation~\ref{(1)} yields $q=u$. If $\zeta=-z/u$, then Equation~\ref{(1)} yields $q=1/u$.

\subsection{\it The case  $\zeta \neq -1$, $z=(1-E+u+uE)/2$}

As in the previous subsection, we may assume that  $z^2 \neq b$. Then Equation~\ref{(5)} becomes:
\begin{equation}\label{quad2}
\zeta^2=\frac{-1+E-2u-Eu^2-u^2}{2u} \zeta + \frac{-1+2E-2u-2Eu^2-E^2-u^2+2E^2u-E^2u^2}{4u}.
\end{equation}
The above quadratic equation has two solutions: $\zeta=-z$ or $\zeta=-z/u$.
If $\zeta=-z$, then Equation~\ref{(1)} yields $q=-u$. If $\zeta=-z/u$, then Equation~\ref{(1)} yields $q=-1/u$.

\subsection{\it Summarizing}\label{table}

The cases below are the only cases where $D_{\mathcal{H}}=D_{\rm Y}$ and
$$
\frac{\tau(G_i^m)}{{\rm tr}_{_S}(g_i^m)}=\left(\frac{\zeta}{z}\right)^{m} \,\,\,\,\text{for } m \in \{2,3\}.
$$
One can easily check, using (\ref{hpowersev}), (\ref{hpowersod}), (\ref{powersev}) and (\ref{powerseod}), that the above equality holds for all $m \in \mathbb{N}$.

$ $
\begin{center}
\begin{tabular}{|c|c|c|c|c|c|}
\hline
{\bf Case}&$q$ & $\zeta$ & $u$ & $z$ & $E$\\
\hline
{\bf 1}&$1$ & $z$ & $1$ & $\mathbb{C}^*$ & any\\
{\bf 2}&$1$ & $-z$ & $1$ & $\mathbb{C}^*$ & any\\
{\bf 3}&$\mathbb{C}^*$ & $q$ & $1$ &$1$ & any\\
{\bf 4}&$\mathbb{C}^*$ & $q$ & $1$ & $-1$ & any\\
{\bf 5}&$\mathbb{C}^*$ & $-1$ & $1$ &$1$ & any\\
{\bf 6}&$\mathbb{C}^*$ & $-1$ & $1$ & $-1$ & any\\
{\bf 7}&$1$ & $E$ & $\mathbb{C}^*$ & $-E$ & any\\
{\bf 8}&$1$ & $-E$ & $\mathbb{C}^*$ & $-E$ & any\\
{\bf 9}&$\mathbb{C}^*$ & $q$ & $\mathbb{C}^*$ & $-1$  & $1$\\
{\bf 10}&$\mathbb{C}^*$  & $q$ & $\mathbb{C}^*$ &$u$ & $1$\\
{\bf 11}&$\mathbb{C}^*$ & $-1$ & $\mathbb{C}^*$ & $-1$  & $1$\\
{\bf 12}&$\mathbb{C}^*$  & $-1$ & $\mathbb{C}^*$ &$u$ & $1$\\
{\bf 13}&$u$ & $z$ & $\mathbb{C}^*$ & $\mathbb{C}^*$  & $1$\\
{\bf 14}&$1/u$ & $-z/u$ & $\mathbb{C}^*$ &$\mathbb{C}^*$ & $1$\\
{\bf 15}&$-u$ & $-z$ & $\mathbb{C}^*$ & $(1-E+u+uE)/2$  & any\\
{\bf 16}&$-1/u$ & $-z/u$ & $\mathbb{C}^*$ &$(1-E+u+uE)/2 $ & any\\
\hline
\end{tabular}
\end{center}\
$ $

\subsection{\it Dismissing two cases}

Let us now take $\alpha=\sigma_1\sigma_2^2\sigma_1\sigma_2^2$ in $B_n$ for $n\geq 3$.
We have
$$
\tau(\pi(\alpha))=\tau(G_1G_2^2G_1G_2^2)= (q^2\zeta-2q\zeta+\zeta)(q^2\zeta-q\zeta+\zeta+q^2-q)+(2q^2\zeta-2q\zeta+q^2)(q\zeta-\zeta+q).
$$
and
$$
\begin{array}{rcccl}
{\rm tr}_{_S}(\delta(\alpha))&=&{\rm tr}_{_S}(g_1g_2^2g_1g_2^2)&=&{\rm tr}_{_S}(g_1^2)+2(u-1)\,{\rm tr}_{_S}(g_1^2e_2)+2(u-1)\,{\rm tr}_{_S}(g_1^2g_2e_2)\\
&&&&(u-1)^2\left[{\rm tr}_{_S}(g_1e_2g_1e_2)+2{\rm tr}_{_S}(g_1e_2g_1g_2e_2)+{\rm tr}_{_S}(g_1g_2e_2g_1g_2e_2)\right].
\end{array}
$$
Here, the fact that  $X_S = \{ {\rm x}_1,\ldots, {\rm x}_{d-1}\}$ is a solution of the E--system simplifies calculations a lot.
For example, we automatically deduce that ${\rm tr}_{_S}(g_1^2e_2)\stackrel{\rm E}{=} E\,{\rm tr}_{_S}(g_1^2)$. Now let us see what happens when we try to calculate
${\rm tr}_{_S}(g_1e_2g_1e_2)$. This is always equal to:
$$\frac{1}{d^2}\sum_{k=0}^{d-1}\sum_{m=0}^{d-1}{\rm tr}_{_S}(g_1t_2^kt_3^{-k}g_1t_2^mt_3^{-m})
=\frac{1}{d^2}\sum_{k=0}^{d-1}\sum_{m=0}^{d-1}{\rm tr}_{_S}(g_1^2t_1^kt_2^mt_3^{-k-m})=
\frac{1}{d^2}\sum_{k=0}^{d-1}\sum_{m=0}^{d-1}{\rm x}_{d-k-m}{\rm tr}_{_S}(g_1^2t_1^kt_2^m)
$$
We have
$$\begin{array}{rll}
{\rm tr}_{_S}(g_1^2t_1^kt_2^m)&=&{\rm tr}_{_S}(t_1^kt_2^m)+(u-1)\,{\rm tr}_{_S}(e_1t_1^kt_2^m)+(u-1)\,{\rm tr}_{_S}(g_1e_1t_1^kt_2^m)\\ \\
&=&{\rm x}_k{\rm x}_m+(u-1)\frac{1}{d}\sum_{l=0}^{d-1}{\rm x}_{k+l}{\rm x}_{m-l}+(u-1)z{\rm x}_{k+m}\\ \\
&\stackrel{\rm E}{=}& {\rm x}_k{\rm x}_m+(u-1)E\,{\rm x}_{k+m}+(u-1)z{\rm x}_{k+m}
\end{array}$$
Thus,
$$\begin{array}{rll}
{\rm tr}_{_S}(g_1e_2g_1e_2)&=&\frac{1}{d^2}\sum_{k=0}^{d-1}\sum_{m=0}^{d-1}{\rm x}_{d-k-m}{\rm x}_k{\rm x}_m+[(u-1)(E+z)]
\frac{1}{d^2}\sum_{k=0}^{d-1}\sum_{m=0}^{d-1}{\rm x}_{d-k-m}{\rm x}_{k+m}\\ \\
&\stackrel{\rm E}{=} &\frac{1}{d}\sum_{k=0}^{d-1}{\rm x}_{d-k}{\rm x}_kE+ [(u-1)(E+z)]\frac{1}{d}\sum_{k=0}^{d-1}E\\ \\
&\stackrel{\rm E}{=} &E^2+[(u-1)(E+z)]E=E\,(uE+uz-z)=E\,{\rm tr}_{_S}(e_1g_1^2)
\end{array}
$$
We finally obtain that
$$
{\rm tr}_{_S}(g_1g_2^2g_1g_2^2) \stackrel{\rm E}{=} b(2b-1)
+(u-1)^2(E+uz+z)(uE+uz-z)+u(u-1)^2z^2.
$$
In Cases 1--14, we obtain:
$$
\frac{\tau(\pi(\alpha))}{{\rm tr}_{_S}(\delta(\alpha))}=\frac{\tau(G_1G_2^2G_1G_2^2)}{{\rm tr}_{_S}(g_1g_2^2g_1g_2^2)}\stackrel{\rm E}{=}\left(\frac{\zeta}{z}\right)^{6}=\left(\frac{\zeta}{z}\right)^{\epsilon(\alpha)}.
$$
\\
Cases 15 and 16 collapse, unless $E=1$ or $u= \pm 1$. However:
\begin{itemize}
\item $\text{Case }15,\,E=1  \text{ is covered by Case }10$\,;
\item $\text{Case }15,\,u=1  \text{ is covered by Case }3$\,;
\item $\text{Case }15,\,u=-1  \text{ is covered by Case }7$\,;
\item $\text{Case }16,\,E=1  \text{ is covered by Case }12$\,;
\item $\text{Case }16,\,u=1  \text{ is covered by Case }3$\,;
\item $\text{Case }16,\,u=-1  \text{ is covered by Case }8$\,.
\end{itemize}

\section{The only cases where $P$ and $\Delta_S$ coincide} \label{sameinvs}

We will show that the cases where the invariants $P$ and $\Delta_S$ coincide are precisely the Cases~1--14 in the table of \S\ref{table}.
We have already shown that if $P$ and $\Delta_S$ coincide, then we must be in one of these cases. We will now show that
in all these cases, $P$ and $\Delta_S$ do coincide. Note that these results hold for any $d\in \mathbb{N}$ and for any non-empty subset $S$ of  ${\mathbb Z}/d{\mathbb Z}$.

\subsection{\it General methodology}\label{first section}

We already know (see \S\ref{table}) that, for $\a = \s_i^m \in B_n$ $(1 \leq i \leq n-1)$, in Cases~1--14 we have
\begin{equation}\label{n=1}
D_{\mathcal{H}}=D_{\rm Y} \,\,\,\,\text{and}\,\,\,\,\frac{\tau(\pi(\s_i^m))}{{\rm tr}_{_S}(\delta(\s_i^m))}=\left(\frac{\zeta}{z}\right)^{m} \,\,\,\,\text{for all } m \in \mathbb{N}.
\end{equation}
In particular, following (\ref{hpowersev}), (\ref{hpowersod}), (\ref{powersev}) and (\ref{powerseod}), in Cases~3--6 and 9--12 we have
\begin{equation}\label{n=2,3-6}
\tau(\pi(\s_i^m))=\zeta^m  \,\,\,\,\text{and}\,\,\,\,{\rm tr}_{_S}(\delta(\s_i^m))=z^m \,\,\,\,\text{for all } m \in \mathbb{N}.
\end{equation}

We need to show that
\begin{equation}\label{finalresult}
\frac{\tau(\pi(\alpha))}{{\rm tr}_{_S}(\delta(\alpha))}=\left(\frac{\zeta}{z}\right)^{\epsilon(\alpha)} \,\,\,\,\text{for all } \a \in B_n,
\end{equation}
where $\epsilon(\alpha)$ is the sum of the exponents of the braid generators in the braid word $\alpha$.
In Cases~3--6 and 9--12 we will even show that
\begin{equation}\label{3-6 tau}
{\tau(\pi(\alpha))}=\zeta^{\epsilon(\alpha)}\,\,\,\,\text{for all } \a \in B_n
\end{equation}
and
\begin{equation}\label{3-6 t}
{{\rm tr}_{_S}(\delta(\alpha))}=z^{\epsilon(\alpha)}\,\,\,\,\text{for all } \a \in B_n.
\end{equation}
We will prove  the above by induction on the following non-negative number:
$$\nu(\a):= |\,\text{sum of all negative exponents of the braid generators in } \a\,|.$$
Note that $\nu(\a)$ depends on the expression of $\a$ in terms of braid generators.

For the inductive step, we will show  (\ref{finalresult}) in  Cases ~7--8 and 13--14 (respectively (\ref{3-6 tau}) and (\ref{3-6 t}) in Cases~3--6 and 9--12) for $\alpha \s_i^{-1}$ $(\a \in B_n,\,1\leq i \leq n-1)$ using the  induction hypothesis. The formulas for $\pi( \s_i^{-1})= G_i^{-1}$ and  $\delta(\s_i^{-1})= g_i^{-1}$ are given respectively by Equations \ref{hinv} and \ref{invrs}.

For the first step ($\nu(\a)=0$), we will proceed by induction on $n$.
Set
\begin{center}
$B_n^{+}:=\{\, \a \in B_n \,\,|\,\, \nu(\a)=0\,\}$.\footnote{ This set is known as the \emph{braid monoid}, see, for example, \cite[Chapter 4]{gp}.}
\end{center}
The cases $n=1$ and $n=2$ are taken care of by (\ref{n=1}) for Cases ~7--8 and 13--14  (respectively (\ref{n=2,3-6}) for Cases~3--6 and 9--12). We will only need to prove
(\ref{finalresult}) (respectively (\ref{3-6 tau}) and (\ref{3-6 t}))
 for $B_{n+1}^{+}$ assuming that it holds for $B_n^{+}$. To do this we will use a
 second induction on the non-negative number $\epsilon(\beta)+\epsilon_n(\beta)$, where
 $\epsilon_n(\beta)$ denotes the sum of the exponents of the braid generator $\s_n$ in the braid word $\beta \in B_{n+1}^{+}$.
 Note that $\epsilon(\beta)$ is uniquely defined for $\beta$, whereas $\epsilon_n(\beta)$ depends on the expression of $\beta$ in terms of braid generators.
 However, we always have
 $\epsilon(\beta) \geq \epsilon_n(\beta)$.

  If $\epsilon(\beta)+\epsilon_n(\beta)=0$, then $\beta=1$ and there is nothing to prove.
 If $\epsilon(\beta)+\epsilon_n(\beta)=1$, then $\epsilon(\beta)=1$ and $\epsilon_n(\beta)=0$.
 Hence, $\beta = \s_i$ for some $1 \leq i \leq n-1$ and the desired result holds. Now assume that $\epsilon(\beta)+\epsilon_n(\beta)>1$ and that the result holds for smaller values of $\epsilon+\epsilon_n$. We will distinguish three cases:
 \begin{itemize}
 \item If $\epsilon_n(\beta)=0$, then $\beta \in B_n^+$, and the induction hypothesis on $n$ yields the desired result.
 \item If  $\epsilon_n(\beta)=1$, then there exist $\a_1,\,\a_2 \in B_n^+$ such that $\beta=\a_1\s_n\a_2$. We have
 $$\tau(\pi(\beta))=\zeta \, \tau(\pi(\a_1\a_2))\,\,\,\,\text{and}\,\,\,\,{\rm tr}_{_S}(\delta(\beta))=z \, {\rm tr}_{_S}(\delta(\a_1\a_2)).$$
 The induction hypothesis on $n$ yields the rest.
 \item If $\epsilon_n(\beta) \geq 2$, then there exist $\a \in B_n^+$ and $\beta_1,\,\beta_2 \in B_{n+1}^+$ such that
 $\beta=\beta_1\s_n \a \s_n \beta_2$. We will  need the following lemma:
 \end{itemize}

\begin{lem}\label{psalidi}
Let $\a \in B_n^{+}$. Then one of the following hold:
\begin{enumerate}[(i)]
\item $\s_n\a\s_n=\a_1 \s_n \a_2$, for some $\a_1,\a_2 \in B_n^{+}$, or \smallbreak
\item $\s_n\a\s_n=\beta_1 \s_j^2 \beta_2$, for some $\beta_1,\beta_2 \in B_{n+1}^{+}$ and $1 \leq j \leq n$.
\end{enumerate}
\end{lem}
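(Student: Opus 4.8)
The statement lives entirely in the positive braid monoid, and the only relations in play are the braid relation $\sigma_n\sigma_{n-1}\sigma_n=\sigma_{n-1}\sigma_n\sigma_{n-1}$ and the commutations $\sigma_k\sigma_n=\sigma_n\sigma_k$ for $k\le n-2$; in particular, $\sigma_{n-1}$ is the unique generator occurring in elements of $B_n^{+}$ that does not commute with $\sigma_n$. The plan is a double induction: an outer induction on $n$ and, for fixed $n$, an inner induction on
\[
m(\alpha):=\min\bigl\{\text{number of letters }\sigma_{n-1}\text{ in }W \ :\ W\text{ a positive word for }\alpha\bigr\}.
\]
If $n=1$, or more generally if $m(\alpha)=0$, then $\alpha\in\langle\sigma_1,\dots,\sigma_{n-2}\rangle^{+}$ commutes with $\sigma_n$, so $\sigma_n\alpha\sigma_n=\alpha\,\sigma_n^{2}$, which is conclusion (ii) with $j=n$, $\beta_1=\alpha$, $\beta_2=1$. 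If $m(\alpha)=1$, pick a positive word $\alpha=A\,\sigma_{n-1}\,B$ with $A,B\in\langle\sigma_1,\dots,\sigma_{n-2}\rangle^{+}$; then $\sigma_n\alpha\sigma_n=A\,\sigma_n\sigma_{n-1}\sigma_n\,B=A\,\sigma_{n-1}\sigma_n\sigma_{n-1}\,B=(A\sigma_{n-1})\,\sigma_n\,(\sigma_{n-1}B)$, which is conclusion (i).

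The heart of the argument is $m(\alpha)\ge 2$. Fix a positive word for $\alpha$ attaining $m(\alpha)$ and cut it at its first two occurrences of $\sigma_{n-1}$, writing $\alpha=A\,\sigma_{n-1}\,C\,\sigma_{n-1}\,D$ with $A,C\in\langle\sigma_1,\dots,\sigma_{n-2}\rangle^{+}$ and $D\in B_n^{+}$, where the sub-word $D$ carries exactly $m(\alpha)-2$ letters $\sigma_{n-1}$. Since $C$ lies in the copy of $B_{n-1}^{+}$ generated by $\sigma_1,\dots,\sigma_{n-2}$, the lemma at level $n-1$ (outer induction hypothesis) applies to $C$ and gives either $\sigma_{n-1}C\sigma_{n-1}=\nu_1\,\sigma_j^{2}\,\nu_2$ with $\nu_1,\nu_2\in B_n^{+}$ and $1\le j\le n-1$, or $\sigma_{n-1}C\sigma_{n-1}=\mu_1\,\sigma_{n-1}\,\mu_2$ with $\mu_1,\mu_2\in\langle\sigma_1,\dots,\sigma_{n-2}\rangle^{+}$. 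In the first case $\sigma_n\alpha\sigma_n=(A\sigma_n\nu_1)\,\sigma_j^{2}\,(\nu_2 D\sigma_n)$ is immediately of type (ii). In the second case, using that $A,\mu_1,\mu_2$ all commute with $\sigma_n$,
\[
\sigma_n\alpha\sigma_n=A\mu_1\,\sigma_n\,D''\,\sigma_n,\qquad D'':=\sigma_{n-1}\mu_2 D\in B_n^{+}.
\]
A positive word for $D''$ obtained by concatenating $\sigma_{n-1}$, a word for $\mu_2$ in $\sigma_1,\dots,\sigma_{n-2}$, and the sub-word $D$ contains $1+0+(m(\alpha)-2)=m(\alpha)-1$ letters $\sigma_{n-1}$, so $m(D'')<m(\alpha)$. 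By the inner induction hypothesis, $\sigma_n D''\sigma_n$ is either $\alpha_1\sigma_n\alpha_2$ with $\alpha_i\in B_n^{+}$ or $\beta_1\sigma_j^{2}\beta_2$ with $\beta_i\in B_{n+1}^{+}$; left-multiplying by $A\mu_1\in\langle\sigma_1,\dots,\sigma_{n-2}\rangle^{+}\subseteq B_{n+1}^{+}$ yields, respectively, conclusion (i) or (ii) for $\sigma_n\alpha\sigma_n$. This closes both inductions.

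The one delicate point I expect is the strict inequality $m(D'')<m(\alpha)$, and it is exactly here that it matters that case (i) of the lemma at level $n-1$ outputs factors $\mu_1,\mu_2$ in the smaller monoid $\langle\sigma_1,\dots,\sigma_{n-2}\rangle^{+}$ (equivalently $B_{n-1}^{+}$) rather than merely in $B_n^{+}$: this is what guarantees that $\mu_2$ contributes no $\sigma_{n-1}$'s, so that $D''$ genuinely loses one occurrence of $\sigma_{n-1}$ relative to $\alpha$. Beyond this bookkeeping — and the routine observation that every move used (the braid relation and the commutations $\sigma_k\sigma_n=\sigma_n\sigma_k$, $k\le n-2$) is a defining relation of the positive monoid, so that one never leaves $B_{n+1}^{+}$ — the argument is mechanical. (A less combinatorial alternative: first reduce to the case where the chosen positive word for $\alpha$ is reduced in the image $w=\overline{\alpha}\in\mathfrak{S}_n$, since otherwise the deletion condition rewrites it, by braid and commutation moves, into one containing a factor $\sigma_j^{2}$, which is case (ii); if $w(n)=n$ argue as in the $m(\alpha)=0$ case, while if $w(n)<n$ one checks $\ell(s_nws_n)=\ell(w)+2$ and that $s_nws_n$, which fixes $n$, admits a reduced expression using $s_n$ exactly once, whose positive lift gives case (i).)
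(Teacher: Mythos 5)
Your proof is correct and follows essentially the same route as the paper: a double induction, outer on $n$ and inner on the number of occurrences of $\sigma_{n-1}$ in a positive word for $\alpha$, with the same three-case split ($0$, $1$, $\geq 2$ occurrences) and the same application of the level-$(n-1)$ statement to the subword sitting between two occurrences of $\sigma_{n-1}$. The differences are only bookkeeping: you induct on the minimal count $m(\alpha)$ and cut at the first two occurrences, passing the tail $D''$ to the inner induction, whereas the paper inducts on the count $\epsilon_{n-1}$ of a chosen word and, in case (i), rewrites $\alpha$ in place with one fewer $\sigma_{n-1}$.
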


\begin{proof} We will proceed by induction on $n$. If $n=1$, then $\a=1$ and we are in  Case {\em (ii)}. Assume that the above holds for $1,2,\ldots,n-1$. We will show that it holds for $n$. We will proceed by induction on the number $\epsilon_{n-1}(\alpha)$, that is, the sum of the exponents of the braid generator $\s_{n-1}$ in the braid word $\a$:
\begin{itemize}
\item If  $\epsilon_{n-1}(\alpha)=0$, then $\s_n$ commutes with $\a$, and $$\s_n\a\s_n=\a \s_n^2.$$
\item If  $\epsilon_{n-1}(\alpha)=1$, then there exist $b_1,\,b_2 \in B_{n-1}^+$ such that $\a=b_1\s_{n-1}b_2$. We have:
$$ \s_n\a\s_n=b_1\s_n\s_{n-1}\s_nb_2=b_1\s_{n-1}\s_n\s_{n-1}b_2=\a_1 \s_n \a_2,$$
where $\a_1=b_1\s_{n-1} \in B_n^{+}$ and $\a_2=\s_{n-1}b_2 \in B_n^{+}$.
\item If  $\epsilon_{n-1}(\alpha) \geq 2$, then there exist $b \in B_{n-1}^+$ and $\a_1,\a_2 \in B_n^{+}$ such that
$\a=\a_1\s_{n-1}b\s_{n-1}\a_2$. Then, by the induction hypothesis on $n$, one of the following hold:\smallbreak
\begin{enumerate}[(i)]
\item $\s_{n-1}b\s_{n-1}=b_1 \s_{n-1} b_2$, for some $b_1,b_2 \in B_{n-1}^{+}$, or \smallbreak
\item $\s_{n-1}b\s_{n-1}=\alpha_1' \s_j^2 \a_2'$, for some $\a_1',\a_2' \in B_{n}^{+}$ and $1 \leq j \leq n-1$.
\end{enumerate}
In Case (i), the induction hypothesis on $\epsilon_{n-1}(\alpha)$ yields the desired result. In Case (ii), we obtain:
$$ \s_n\a\s_n= \s_n\a_1\a_1' \s_j^2 \a_2'\a_2\s_n=\beta_1 \s_j^2 \beta_2,$$
where $\beta_1=\s_n\a_1\a_1' \in B_{n+1}^{+}$ and $\beta_2=\a_2'\a_2\s_n \in B_{n+1}^{+}$.
\end{itemize}
\end{proof}

Applying now the above lemma to the word $\beta=\beta_1\s_n \a \s_n \beta_2$, where $\a \in B_n^+$ and $\beta_1,\,\beta_2 \in B_{n+1}^+$,
 we obtain that one of the following hold:
\begin{enumerate}[(i)]
\item $\beta=\beta_1\a_1 \s_n \a_2\beta_2$, for some $\a_1,\a_2 \in B_n^{+}$, or \smallbreak
\item $\beta=\beta_1\beta_1' \s_j^2 \beta_2'\beta_2$, for some $\beta_1',\beta_2' \in B_{n+1}^{+}$ and $1 \leq j \leq n$.
\end{enumerate}
The induction hypothesis on $\epsilon(\beta)+\epsilon_n(\beta)$ covers Case (i), since
$\epsilon_n(\beta_1\a_1 \s_n \a_2\beta_2)<\epsilon_n(\beta_1\s_n \a \s_n \beta_2 )$.
Therefore, it is enough to prove (\ref{finalresult}) (respectively (\ref{3-6 tau}) and (\ref{3-6 t})) in Case (ii).
Since $\tau$ and ${\rm tr}_{_S}$ are trace functions, we deduce that we will check the desired equalities on all words of the form:
$$ \beta \s_j^2 \quad{(\beta  \in B_{n+1}^{+}, 1\leq j \leq n)}.$$

To summarize: In order to prove Equality \ref{finalresult} in Cases~7--8 and 13--14, and Equalities \ref{3-6 tau} and \ref{3-6 t} in Cases 3--6 and 9--12, we will show that these equalities hold on all words of the form
$$ \beta \s_j^2 \quad{(\beta  \in B_{n+1}^{+}, 1\leq j \leq n)},$$
assuming the induction hypotheses on $n$ and on $\epsilon+\epsilon_n$, and all words of the form
$$\alpha \s_i^{-1} \quad{(\a \in B_n,\,1\leq i \leq n-1)},$$
assuming the induction hypothesis on $\nu$.

\subsection{\it The Cases 1 and 2}

In the first two cases, although our general methodology applies,  we prefer to use the following, simpler approach. Since $q=1$,  the quadratic relation (\ref{hecke})($\mathrm{h}$) in the Iwahori-Hecke algebra $\mathcal{H}_n(1) \cong \mathfrak{S}_n$ becomes $G_i^2=1$.
Similarly, since $u=1$, the   quadratic relation (\ref{quadr}) in the Yokonuma-Hecke algebra ${\rm Y}_{d,n}(1)$ becomes  $g_i^2=1$.
Therefore, there exist two natural isomorphisms $\iota^{+}$ and $\iota^{-}$ between $\pi(\mathbb{C}B_n) \cong \mathcal{H}_n(1)$ and $\delta(\mathbb{C}B_n)$ given by
$\iota^{+}(G_i)=g_i$ and $\iota^{-}(G_i)=-g_i$ respectively.
Now, if we take $\zeta=z$, then ${\rm tr}_{_S} \circ \iota^{+}$ is a Markov trace on $\mathcal{H}_n(1)$ that satisfies all three rules of Theorem~\ref{otrace}.
The uniqueness of the Ocneanu trace yields ${\rm tr}_{_S} \circ \iota^{+}=\tau$. So in Case 1 we have:
$$
\frac{\tau\left(\pi(\alpha)\right)}{{\rm tr}_{_S}(\delta(\alpha))}=
\frac{{\rm tr}_{_S}\left( \iota^{+} \left(\pi(\alpha)\right) \right)}{{\rm tr}_{_S}\left( \delta(\alpha) \right)}=
\frac{{\rm tr}_{_S} \left(\delta(\alpha)\right)}{{\rm tr}_{_S} \left(\delta(\alpha)\right)} = 1 =
\left(\frac{z}{z}\right)^{\epsilon(\alpha)}=
\left(\frac{\zeta}{z}\right)^{\epsilon(\alpha)} \,\,\,\,\text{for all } \a \in B_n.
$$
Similarly,
if we take $\zeta=-z$, then ${\rm tr}_{_S} \circ \iota^{-}$ is a Markov trace that satisfies all three rules of Theorem~\ref{otrace}.
Therefore, we obtain ${\rm tr}_{_S} \circ \iota^{-}=\tau$.
So in Case 2 we have:
$$
\frac{\tau\left(\pi(\alpha)\right)}{{\rm tr}_{_S}\left(\delta(\alpha)\right)}=
\frac{{\rm tr}_{_S}\left(\iota^{-}\left(\pi(\alpha)\right)\right)}{{\rm tr}_{_S}\left(\delta(\alpha)\right)}=
\frac{(-1)^{\epsilon(\a)}{\rm tr}_{_S}\left(\delta(\alpha)\right)}{{\rm tr}_{_S}\left(\delta(\alpha)\right)}=(-1)^{\epsilon(\a)}=
\left(\frac{-z}{z}\right)^{\epsilon(\alpha)}=
\left(\frac{\zeta}{z}\right)^{\epsilon(\alpha)} \,\,\,\,\text{for all } \a \in B_n.
$$

\subsection{\it The Cases 3--6}\label{3-6}
Following our general methodology, let first $\beta \in B_{n+1}^{+}$ and $1 \leq j \leq n$.
 If $\zeta=q$, we have
$$
{\tau\left(\pi(\beta \s_j^2)\right)}=(q-1)\,{\tau\left(\pi(\beta \s_j)\right)}+q\,{\tau\left(\pi(\beta)\right)}\stackrel{\rm ind. \,hyp.}{=}(q-1)\, q^{\epsilon(\beta)+1}+q\cdot q^{\epsilon(\beta)}=q^{\epsilon(\beta)+2}=
\zeta^{\epsilon(\beta\s_j^2)}.
$$
If $\zeta=-1$, we have
$$
{\tau\left(\pi(\beta \s_j^2)\right)}=
(q-1)\,{\tau\left(\pi(\beta \s_j)\right)}+q\, {\tau\left(\pi(\beta)\right)}\stackrel{\rm ind. \,hyp.}{=}
(q-1)\,(-1)^{\epsilon(\beta)+1}+q \, (-1)^{\epsilon(\beta)}=(-1)^{\epsilon(\beta)+2}=
\zeta^{\epsilon(\beta\s_j^2)}.
$$
If $u=1$ and $z=1$, we have
$$
{{\rm tr}_{_S}\left(\delta(\beta \s_j^2)\right)}=
{{\rm tr}_{_S}\left(\delta(\beta)\right)} \stackrel{\rm ind. \,hyp.}{=} 1= z^{\epsilon(\beta\s_j^2)}.
$$
If $u=1$ and $z=-1$, we have
$$
{{\rm tr}_{_S}\left(\delta(\beta \s_j^2)\right)} = {{\rm tr}_{_S}\left(\delta(\beta)\right)}\stackrel{\rm ind. \,hyp.}{=} (-1)^{\epsilon(\beta)}=(-1)^{\epsilon(\beta)+2}=
z^{\epsilon(\beta\s_j^2)}.
$$

Now let $\a \in B_n$ and $1 \leq i \leq n-1$. If $\zeta=q$, we have
$$
\tau\left(\pi(\a \s_i^{-1})\right)= q^{-1}\,{\tau\left(\pi(\a \s_i)\right)} +(q^{-1}-1)\,{\tau\left(\pi(\a)\right)}\stackrel{\rm ind. \,hyp.}{=}
q^{-1}q^{\epsilon(\a)+1}+(q^{-1}-1)\, q^{\epsilon(\a)}=
q^{\epsilon(\a)-1}=
\zeta^{\epsilon(\a \s_i^{-1})}.
$$
If $\zeta=-1$, we have
$$
{\tau\left(\pi(\a \s_i^{-1})\right)}=q^{-1}\,{\tau\left(\pi(\a \s_i)\right)}+(q^{-1}-1)\,{\tau\left(\pi(\a)\right)}\stackrel{\rm ind. \,hyp.}{=}q^{-1}(-1)^{\epsilon(\a)+1}+(q^{-1}-1)(-1)^{\epsilon(\a)}=(-1)^{\epsilon(\a)-1}=
\zeta^{\epsilon(\a \s_i^{-1})}.
$$
If $u=1$ and $z=1$, we have
$$
{{\rm tr}_{_S}\left(\delta(\a \s_i^{-1})\right)}={{\rm tr}_{_S}\left(\delta(\a \s_i)\right)}\stackrel{\rm ind. \,hyp.}{=}1=z^{\epsilon(\a \s_i^{-1})}.
$$
If $u=1$ and $z=-1$, we have
$$
{{\rm tr}_{_S}\left(\delta(\a \s_i^{-1})\right)}={{\rm tr}_{_S}\left(\delta(\a \s_i)\right)}\stackrel{\rm ind. \,hyp.}{=}(-1)^{\epsilon(\a)+1}=(-1)^{\epsilon(\a)-1}=z^{\epsilon(\a \s_i^{-1})}.
$$

Thus, we conclude that (\ref{3-6 tau}) and (\ref{3-6 t}) hold, whence we deduce (\ref{finalresult}).

\subsection{\it The Cases 7 and 8}

In order to show (\ref{finalresult}) in Cases~7 and~8, we will first show that
\begin{equation}\label{sqh}
\tau(h G_j^2)=\tau(h) \quad{\left(h \in \mathcal{H}_n(1),\,\,1 \leq j \leq n-1\right)}
\end{equation}
and
\begin{equation}\label{sqy0}
{\rm tr}_{_S}(y g_j^2)={\rm tr}_{_S}(y) \quad{\left(y \in {\rm Y}_{d,n}(u),\,\,1 \leq j \leq n-1\right)}.
\end{equation}
Note that (\ref{sqy0}) is equivalent to
\begin{equation}\label{sqy}
{\rm tr}_{_S}(y e_j)=-{\rm tr}_{_S}(y g_j e_j) \quad{\left(y \in {\rm Y}_{d,n}(u),\,\,1 \leq j \leq n-1\right)}.
\end{equation}
Equation \ref{sqh} is straightforward, since $G_j^2=1$, for all $j=1,\ldots,n-1$, in $\mathcal{H}_n(1)$. To prove
(\ref{sqy}) we will proceed by induction on $n$.
Recall that $z=-E$. It is enough to show that (\ref{sqy}) holds on the elements of the inductive basis of
${\rm Y}_{d,n}(u)$.

Let $n=2$. Let $y=t_1^kg_1t_1^l$ for some $k,\,l\in {\mathbb Z}/d{\mathbb Z}$. Then, by (\ref{edikrels}) and (\ref{traceaengn}), we have
$${\rm tr}_{_S}(y e_1)={\rm tr}_{_S}(t_1^kg_1t_1^le_1)={\rm tr}_{_S}(t_1^{k+l}g_1e_1)=
-E\,{\rm tr}_{_S}(t_1^{k+l})=-E\, {\rm x}_{k+l},$$
and, by (\ref{modular})($\mathrm{f}_2$), Lemma \ref{newequal} and (\ref{etraceanen}), we have
$${\rm tr}_{_S}(y g_1e_1)={\rm tr}_{_S}(t_1^kg_1t_1^lg_1e_1)=
 {\rm tr}_{_S}(t_1^kg_1^2e_1t_1^l)=
{\rm tr}_{_S}(t_1^{k+l}e_1)+(u-1)\,{\rm tr}_{_S}(t_1^{k+l}e_1)+(u-1)\,{\rm tr}_{_S}(t_1^{k+l}g_1e_1) \stackrel{\rm E}{=}  E\,{\rm x}_{k+l}.$$
So  (\ref{sqy}) holds. Now let  $y=t_1^{k}t_2^{l}$ for some $k,\,l\in {\mathbb Z}/d{\mathbb Z}$. Then, by (\ref{modular})($\mathrm{f}_2$), (\ref{edikrels}) and Lemma \ref{newequal}, we have
$${\rm tr}_{_S}(ye_1)={\rm tr}_{_S}(t_1^{k}t_2^{l}e_1)=
{\rm tr}_{_S}(t_1^{k+l}e_1)=E\,{\rm x}_{k+l}$$
and
$${\rm tr}_{_S}(yg_1e_1)={\rm tr}_{_S}(t_1^{k}t_2^{l}g_1e_1)={\rm tr}_{_S}(t_1^{k}g_1e_1t_1^{l})=-E\,{\rm x}_{k+l}.$$
So (\ref{sqy}) holds.

Now assume that (\ref{sqy}) holds for $n$. We will prove it for $n+1$.  Let $y=w_ng_ng_{n-1} \ldots g_i t_i^k$ for some $k\in {\mathbb Z}/d{\mathbb Z}$ and some $w_n \in {\rm Y}_{d,n}(u)$.
If $j<n$, then, following the definition of the trace and the induction hypothesis, we have
$${\rm tr}_{_S}(y e_j)={\rm tr}_{_S}(w_ng_ng_{n-1} \ldots g_i t_i^ke_j)
=-E\, {\rm tr}_{_S}(w_ng_{n-1} \ldots g_i t_i^k e_j)= E\, {\rm tr}_{_S}(w_ng_{n-1} \ldots g_i t_i^k g_je_j)=-{\rm tr}_{_S}(y g_je_j).$$
If $j=n$, then, we have to distinguish two cases: If $i=n$, then  we have,  by (\ref{edikrels}),
$${\rm tr}_{_S}(y e_n)={\rm tr}_{_S}(w_ng_nt_n^ke_n)=
{\rm tr}_{_S}(t_n^kw_ng_ne_n) = -E\,{\rm tr}_{_S}(t_n^kw_n),$$
and,  by (\ref{modular})($\mathrm{f}_2$), (\ref{edikrels}) and Lemma \ref{newequal},
$${\rm tr}_{_S}(y g_ne_n)={\rm tr}_{_S}(w_ng_nt_n^kg_ne_n)=
{\rm tr}_{_S}(t_n^kw_n g_n^2e_n)=u\, {\rm tr}_{_S}(t_n^kw_n e_n)+(u-1)\,{\rm tr}_{_S}(t_n^kw_n g_ne_n)=E\,{\rm tr}_{_S}(t_n^kw_n).$$
If $i<n$, then
 $${\rm tr}_{_S}(y e_n)= {\rm tr}_{_S}(w_ng_ng_{n-1} \ldots g_i t_i^ke_n) \stackrel{(\ref{edikrels})}{=}{\rm tr}_{_S}(w_ne_{n-1}g_ng_{n-1} \ldots g_i t_i^k)=-E\, {\rm tr}_{_S}( g_{n-1} \ldots g_i t_i^kw_ne_{n-1}).$$
Following the induction hypothesis, the latter is equal to
 $$E\, {\rm tr}_{_S}( g_{n-1} \ldots g_i t_i^kw_n g_{n-1}e_{n-1})= - {\rm tr}_{_S}(w_n g_{n-1}e_{n-1} g_n g_{n-1} \ldots g_i t_i^k) \stackrel{(\ref{edikrels})}{=}$$
$$ - {\rm tr}_{_S}(w_n g_{n-1}g_n g_{n-1} \ldots g_i t_i^ke_n)=- {\rm tr}_{_S}(w_ng_n g_{n-1}g_ng_{n-2} \ldots g_i t_i^k e_n)=- {\rm tr}_{_S}(y g_ne_n).
$$

Finally, let $y=w_nt_{n+1}^k$ for some $k\in {\mathbb Z}/d{\mathbb Z}$ and some $w_n \in {\rm Y}_{d,n}(u)$.
If $j<n$, then, following the definition of the trace and the induction hypothesis, we have
$${\rm tr}_{_S}(y e_j)={\rm tr}_{_S}(w_nt_{n+1}^ke_j)=
{\rm x}_k{\rm tr}_{_S}(w_ne_j)=-{\rm x}_k{\rm tr}_{_S}(w_ng_je_j)=-{\rm tr}_{_S}(y g_je_j).$$
If $j=n$, then, with repeated use of Lemma \ref{newequal} we obtain:
 $${\rm tr}_{_S}(y e_n)={\rm tr}_{_S}(w_nt_{n+1}^ke_n)=
 {\rm tr}_{_S}(w_n t_n^k e_n)\stackrel{\rm E}{=}  E\,{\rm tr}_{_S}(w_n t_n^k)=-{\rm tr}_{_S}(w_n t_n^ke_ng_n)=-{\rm tr}_{_S}(w_n t_{n+1}^ke_ng_n)=
 -{\rm tr}_{_S}(yg_ne_n).$$

Equations \ref{sqh} and \ref{sqy0} imply the following for the inverses of the generators:
\begin{equation}\label{last1}
\tau(h G_j^{-1})=\tau(hG_j) \quad{\left(h \in \mathcal{H}_n(1),\,\,1 \leq j \leq n-1\right)}
\end{equation}
and
\begin{equation}\label{last2}
{\rm tr}_{_S}(y g_j^{-1})={\rm tr}_{_S}(yg_j) \quad{\left(y \in {\rm Y}_{d,n}(u),\,\,1 \leq j \leq n-1\right)}.
\end{equation}

We are now ready to prove (\ref{finalresult}). Let $\beta \in B_{n+1}^{+}$ and $1 \leq j \leq n$. Following Equations \ref{sqh} and \ref{sqy0}, we obtain
$$\frac{\tau(\pi(\beta\s_j^2))}{{\rm tr}_{_S}(\delta(\beta\s_j^2))}=\frac{\tau(\pi(\beta)G_j^2)}{{\rm tr}_{_S}(\delta(\beta)g_j^2)}=
\frac{\tau(\pi(\beta))}{{\rm tr}_{_S}(\delta(\beta))}\stackrel{\rm ind. \,hyp.}{=}\left(\frac{\zeta}{z}\right)^{\epsilon(\beta)}=\left(\frac{\zeta}{z}\right)^{\epsilon(\beta)+2}=\left(\frac{\zeta}{z}\right)^{\epsilon(\beta\s_j^2)},
$$
since
\begin{equation}\label{right ratio}
\frac{\zeta}{z}=-1 \text{ in Case 7 and } \frac{\zeta}{z}=1 \text{ in Case 8.}
\end{equation}

Now let $\a \in B_n$ and $1 \leq i \leq n-1$. Following Equations \ref{last1} and \ref{last2}, we obtain
$$\frac{\tau(\pi(\a\s_i^{-1}))}{{\rm tr}_{_S}(\delta(\a\s_i^{-1}))}=\frac{\tau(\pi(\a)G_i^{-1})}{{\rm tr}_{_S}(\delta(\a)g_i^{-1})}=
\frac{\tau(\pi(a)G_i)}{{\rm tr}_{_S}(\delta(a)g_i)}\stackrel{\rm ind. \,hyp.}{=}\left(\frac{\zeta}{z}\right)^{\epsilon(\a)+1}=\left(\frac{\zeta}{z}\right)^{\epsilon(\a)-1}=\left(\frac{\zeta}{z}\right)^{\epsilon(\a\s_i^{-1})},
$$
again because of (\ref{right ratio}).
Therefore, in both Cases 7 and 8, our general methodology yields (\ref{finalresult}).

\subsection{\it The Cases 9--12}

We will show that, in these cases, (\ref{3-6 tau}) and (\ref{3-6 t}) hold again for all $\alpha \in B_n$. On the Hecke algebra side, we have already shown in \S\ref{3-6}
that (\ref{3-6 tau}) holds when $\zeta=q$ or $\zeta=-1$. Thus, it remains to show (\ref{3-6 t}) for Cases  9--12.

Let $\beta \in B_{n+1}^{+}$ and $1 \leq j \leq n$.
We have
$$
{{\rm tr}_{_S}\left(\delta(\beta \s_j^2)\right)}={{\rm tr}_{_S}\left(\delta(\beta) g_j^2\right)}={{\rm tr}_{_S}\left(\delta(\beta)\right)}+(u-1)\,{\rm tr}_{_S}\left(\delta(\beta)e_j\right)+(u-1)\,{\rm tr}_{_S}\left(\delta(\beta)e_jg_j\right).
$$
By Proposition \ref{E=1}, if $E=1$, the latter is equal to
$$
{{\rm tr}_{_S}\left(\delta(\beta)\right)}+(u-1)\,{\rm tr}_{_S}\left(\delta(\beta)\right)+(u-1)\,{\rm tr}_{_S}\left(\delta(\beta)g_j\right).
$$
On the one hand, if $z=u$, we deduce, using the induction hypothesis, that
$$
{{\rm tr}_{_S}\left(\delta(\beta \s_j^2)\right)}=u^{\epsilon(\beta)}+(u-1)\, u^{\epsilon(\beta)}+(u-1)\, u^{\epsilon(\beta)+1}=u^{\epsilon(\beta)+2}=
z^{\epsilon(\beta\s_j^2)}.
$$
On the other hand, if $z=-1$, we deduce, using the induction hypothesis, that
$$
{{\rm tr}_{_S}\left(\delta(\beta \s_j^2)\right)}= (-1)^{\epsilon(\beta)}+(u-1)\,(-1)^{\epsilon(\beta)}+(u-1)\, (-1)^{\epsilon(\beta)+1}=(-1)^{\epsilon(\beta)+2}=
z^{\epsilon(\beta\s_j^2)}.
$$

Now let $\a \in B_n$ and $1 \leq i \leq n-1$. Using formula (\ref{invrs}), we obtain
$$
{{\rm tr}_{_S}\left(\delta(\a \s_i^{-1})\right)}={{\rm tr}_{_S}\left(\delta(\a) g_i^{-1}\right)}={{\rm tr}_{_S}\left(\delta(\a)g_i\right)}+(u^{-1} - 1)\, {{\rm tr}_{_S}\left(\delta(\a)e_i\right)}+ (u^{-1} - 1)\, {{\rm tr}_{_S}\left(\delta(\a)e_i g_i\right)}.
$$
By Proposition \ref{E=1}, if $E=1$, the latter is equal to
$$
{{\rm tr}_{_S}\left(\delta(\a)g_i\right)}+(u^{-1}-1){\rm tr}_{_S}\left(\delta(\a)\right)+(u^{-1}-1){\rm tr}_{_S}\left(\delta(\a)g_i\right).
$$
On the one hand, if $z=u$, we deduce, using the induction hypothesis, that
$$
{{\rm tr}_{_S}\left(\delta(\a \s_i^{-1})\right)}= u^{\epsilon(\a)+1}+(u^{-1}-1)\,u^{\epsilon(\a)}+(u^{-1}-1)\,u^{\epsilon(\a)+1}=u^{\epsilon(\a)-1}=z^{\epsilon(\a\s_i^{-1})}.
$$
On the other hand, if $z=-1$, we deduce, using the induction hypothesis, that
$$
{{\rm tr}_{_S}\left(\delta(\a \s_i^{-1})\right)}= (-1)^{\epsilon(\a)+1}+(u^{-1}-1)(-1)^{\epsilon(\a)}+(u^{-1}-1)(-1)^{\epsilon(\a)+1}=
(-1)^{\epsilon(\a)-1}=z^{\epsilon(\a\s_i^{-1})}.
$$

Thus, we conclude that (\ref{3-6 t}) holds. Since (\ref{3-6 tau}) also holds, we deduce that (\ref{finalresult}) holds.

\subsection{\it The Cases 13 and 14}\label{last section}

The Cases 13~and~14 have been covered by Corollaries \ref{firstcompinv} and \ref{secondcompinv} respectively. Nevertheless, we will see here how our general methodology applies also to these cases.

Let $\beta \in B_{n+1}^+$ and $1 \leq j \leq n$. We have
$$
\tau\left(\pi(\beta\s_j^2)\right)=\tau\left(\pi(\beta)G_j^2\right)= (q-1)\,\tau\left(\pi(\beta)G_j\right) +q\,\tau\left(\pi(\beta)\right)= (q-1)\,\tau\left(\pi(\beta\s_j)\right)+q\,\tau\left(\pi(\beta)\right)
$$
and
$$
{\rm tr}_{_S}\left(\delta(\beta\s_j^2)\right)=
{\rm tr}_{_S}\left(\delta(\beta)g_j^2\right)= {\rm tr}_{_S}\left(\delta(\beta)\right)+(u-1)\,{\rm tr}_{_S}\left(\delta(\beta)e_j\right)+(u-1)\,{\rm tr}_{_S}\left(\delta(\beta)e_jg_j\right).
$$
Since $E=1$, by Proposition \ref{E=1}, the last equation becomes:
 $$
 {\rm tr}_{_S}\left(\delta(\beta\s_j^2)\right)= {\rm tr}_{_S}\left(\delta(\beta)\right)+ (u-1)\,{\rm tr}_{_S}\left(\delta(\beta)\right)+(u-1)\,{\rm tr}_{_S}\left(\delta(\beta)g_j\right)=
(u-1)\,{\rm tr}_{_S}\left(\delta(\beta\s_j)\right)+u\,{\rm tr}_{_S}\left(\delta(\beta)\right).
$$
If $q=u$ and $\zeta=z$, the induction hypothesis on (\ref{finalresult})  yields:
$$
\tau\left(\pi(\beta\s_j^2)\right) = (u-1) \cdot 1^{\epsilon(\beta)+1}\cdot{\rm tr}_{_S}\left(\delta(\beta\s_j)\right) +u\cdot 1^{\epsilon(\beta)}\cdot{\rm tr}_{_S}(\delta(\beta))=
{\rm tr}_{_S}\left(\delta(\beta\s_j^2)\right),
$$
as desired. If $q=1/u$ and $\zeta=-z/u$, the induction hypothesis on (\ref{finalresult}) yields:
$$
\tau\left(\pi(\beta\s_j^2)\right)=\left(\frac{1}{u}-1\right) \left(\frac{-1}{u}\right)^{\epsilon(\beta)+1}{\rm tr}_{_S}(\delta(\beta\s_j))+\frac{1}{u}\left(\frac{-1}{u}\right)^{\epsilon(\beta)}{\rm tr}_{_S}(\delta(\beta))=\left(\frac{-1}{u}\right)^{\epsilon(\beta)+2}{\rm tr}_{_S}(\delta(\beta\s_j^2)),
$$
as desired.

Now let $\a \in B_n$ and $1 \leq i \leq n-1$. We have
$$
\tau\left(\pi(\alpha\s_i^{-1})\right)=q^{-1}\,\tau\left(\pi(\alpha\s_i)\right)+(q^{-1}-1)\,\tau\left(\pi(\alpha)\right)
$$
and
$$
{\rm tr}_{_S}\left(\delta(\a\s_i^{-1})\right) = {\rm tr}_{_S}\left(\delta(\a)g_i^{-1}\right)= {\rm tr}_{_S}\left(\delta(\a)g_i\right)+(u^{-1} - 1)\,{\rm tr}_{_S}\left(\delta(\a)e_i\right)+ (u^{-1} - 1)\,{\rm tr}_{_S}\left(\delta(\a)g_ie_i\right).
$$
Since $E=1$, by Proposition \ref{E=1}, the last equation becomes:
$$
{\rm tr}_{_S}\left(\delta(\a\s_i^{-1})\right)=
{\rm tr}_{_S}\left(\delta(\a)g_i\right)+(u^{-1} - 1)\,{\rm tr}_{_S}\left(\delta(\a)\right)+ (u^{-1} - 1)\,{\rm tr}_{_S}\left(\delta(\a)g_i\right)=
u^{-1}{\rm tr}_{_S}\left(\delta(\a\s_i)\right)+(u^{-1} - 1)\,{\rm tr}_{_S}\left(\delta(\a)\right).
$$
If $q=u$ and $\zeta=z$, the induction hypothesis on (\ref{finalresult}) yields:
$$
\tau\left(\pi(\alpha\s_i^{-1})\right) = u^{-1}\cdot 1^{\epsilon(\a)+1}\cdot{\rm tr}_{_S}(\delta(\a\s_i))+(u^{-1} - 1)\cdot 1^{\epsilon(\a)}\cdot{\rm tr}_{_S}(\delta(\a))=
{\rm tr}_{_S}\left(\delta(\a\s_i^{-1})\right),
$$
as desired. If $q=1/u$ and $\zeta=-z/u$, the induction hypothesis on (\ref{finalresult}) yields:
$$
\tau\left(\pi(\alpha\s_i^{-1})\right)=u  \left(\frac{-1}{u}\right)^{\epsilon(\a)+1}{\rm tr}_{_S}\left(\delta(\a\s_i)\right)+(u-1)\left(\frac{-1}{u}\right)^{\epsilon(\a)}{\rm tr}_{_S}(\delta(\a))
= \left(\frac{-1}{u}\right)^{\epsilon(\a)-1}{\rm tr}_{_S}(\delta(\a\s_i^{-1})),
$$
as desired.
Following our general methodology, (\ref{finalresult}) holds also in Cases 13 and 14.

\subsection{\it Conclusion}

The following result, proved in Subsections \ref{first section}--\ref{last section}, is the main result of this paper.

\begin{thm}\label{mainthm}
Let $X_S$ be a solution of the {\rm E}-system. Let ${\rm tr}_{_S}$ be the corresponding specialized Juyumaya trace on ${\rm Y}_{d,n}(u)$ with parameter $z$, and let
$\tau$ be the Ocneanu trace on $\mathcal{H}_{n}(q)$ with parameter $\zeta$. Let $E={\rm tr}_{_S}(e_i)$ for all $i=1,\ldots,n-1$.
Then $P=\Delta_S$ if and only if we are in one of the cases portrayed in the following table:

\begin{center}
\begin{tabular}{|c|c|c|c|c|c|}
\hline
{\bf Case}&$q$ & $\zeta$ & $u$ & $z$ & $E$\\
\hline
{\bf 1}&$1$ & $z$ & $1$ & $\mathbb{C}^*$ & any\\
{\bf 2}&$1$ & $-z$ & $1$ & $\mathbb{C}^*$ & any\\
{\bf 3}&$\mathbb{C}^*$ & $q$ & $1$ &$1$ & any\\
{\bf 4}&$\mathbb{C}^*$ & $q$ & $1$ & $-1$ & any\\
{\bf 5}&$\mathbb{C}^*$ & $-1$ & $1$ &$1$ & any\\
{\bf 6}&$\mathbb{C}^*$ & $-1$ & $1$ & $-1$ & any\\
{\bf 7}&$1$ & $E$ & $\mathbb{C}^*$ & $-E$ & any\\
{\bf 8}&$1$ & $-E$ & $\mathbb{C}^*$ & $-E$ & any\\
{\bf 9}&$\mathbb{C}^*$ & $q$ & $\mathbb{C}^*$ & $-1$  & $1$\\
{\bf 10}&$\mathbb{C}^*$  & $q$ & $\mathbb{C}^*$ &$u$ & $1$\\
{\bf 11}&$\mathbb{C}^*$ & $-1$ & $\mathbb{C}^*$ & $-1$  & $1$\\
{\bf 12}&$\mathbb{C}^*$  & $-1$ & $\mathbb{C}^*$ &$u$ & $1$\\
{\bf 13}&$u$ & $z$ & $\mathbb{C}^*$ & $\mathbb{C}^*$  & $1$\\
{\bf 14}&$1/u$ & $-z/u$ & $\mathbb{C}^*$ &$\mathbb{C}^*$ & $1$\\
\hline
\end{tabular}
\end{center}\
$ $
\end{thm}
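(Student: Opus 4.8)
The plan is to prove the two implications of the biconditional separately, assembling the case analysis of Section~4 for necessity and the case-by-case verifications of Section~5 for sufficiency. Note first that $D_{\mathcal H}=D_{\rm Y}$ holds in every row of the table, so throughout, $P=\Delta_S$ is equivalent to the ratio identity $\tau(\pi(\alpha))/{\rm tr}_{_S}(\delta(\alpha))=(\zeta/z)^{\epsilon(\alpha)}$ for all braids $\alpha$, i.e.\ to~(\ref{finalresult}).

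\emph{Necessity.} Suppose $P=\Delta_S$. Evaluating both invariants on the closure of the identity braid in every $B_n$ forces $D_{\mathcal H}=D_{\rm Y}$, hence the polynomial identity~(\ref{(1a)}); then $P=\Delta_S$ forces the ratio identity~(\ref{ratio}). Specializing $\alpha$ to $\sigma_1^2$ and to $\sigma_1^3$ yields the further polynomial identities~(\ref{(2a)}) and~(\ref{(3a)}). Eliminating $q$ among~(\ref{(1a)}),~(\ref{(2a)}),~(\ref{(3a)}) — first peeling off the degenerate subcase $\zeta=-1$, then the subcases $z^2=b$ and $bz=c$ — one is driven to $\zeta=-1$ unless $u=1$ or $E=1$ or $z=(1-E+u+uE)/2$; running through all surviving branches and feeding each back into~(\ref{(1)}) produces the sixteen-row table of~\S\ref{table}. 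Finally, evaluating the ratio on $\alpha=\sigma_1\sigma_2^2\sigma_1\sigma_2^2$ — where the E-system collapses computations such as ${\rm tr}_{_S}(g_1e_2g_1e_2)\stackrel{\rm E}{=}E\,{\rm tr}_{_S}(e_1g_1^2)$ — shows that Cases~15 and~16 survive only when they already coincide with Cases~3, 7, 8, 10 or~12. This leaves exactly Cases~1--14.

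\emph{Sufficiency.} Conversely, one must verify~(\ref{finalresult}) in each of Cases~1--14, and I would group them. Cases~1--2 ($q=u=1$): both quadratic relations degenerate to $G_i^2=1$, $g_i^2=1$, so $\iota^{\pm}\colon G_i\mapsto\pm g_i$ are algebra isomorphisms onto $\delta(\mathbb C B_n)$; when $\zeta=\pm z$ the trace ${\rm tr}_{_S}\circ\iota^{\pm}$ satisfies the three rules characterizing $\tau$, so by uniqueness (Theorem~\ref{otrace}) it equals $\tau$, giving ratio $(\pm1)^{\epsilon(\alpha)}=(\zeta/z)^{\epsilon(\alpha)}$. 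Cases~3--6 and~9--12: I would establish the stronger identities $\tau(\pi(\alpha))=\zeta^{\epsilon(\alpha)}$ and ${\rm tr}_{_S}(\delta(\alpha))=z^{\epsilon(\alpha)}$ separately, via the induction described below, using Proposition~\ref{E=1} on the ${\rm tr}_{_S}$-side of Cases~9--12 to annihilate the $e_j$-terms when $E=1$. Cases~13--14 follow at once from Corollaries~\ref{firstcompinv} and~\ref{secondcompinv}, but the same induction re-derives them with $q=u,\zeta=z$ (resp.\ $q=1/u,\zeta=-z/u$), again via Proposition~\ref{E=1}. Cases~7--8 ($z=-E$, $\zeta=\mp E$, $u$ arbitrary) are the delicate ones: one reduces to proving $\tau(hG_j^2)=\tau(h)$ (trivial, since $G_j^2=1$ in $\mathcal H_n(1)$) and ${\rm tr}_{_S}(yg_j^2)={\rm tr}_{_S}(y)$ for all $y\in{\rm Y}_{d,n}(u)$, equivalently ${\rm tr}_{_S}(ye_j)=-{\rm tr}_{_S}(yg_je_j)$, proved by induction on $n$ over the inductive basis~(\ref{basis}), splitting into the subcases $j<n$, $j=n=i$, $j=n>i$ and using~(\ref{edikrels}), Lemma~\ref{newequal} and~(\ref{etraceanen}).

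\emph{The common induction, and the main obstacle.} Every appeal to an ``induction described below'' is to the single layered induction of~\S\ref{first section}: an outer induction on $\nu(\alpha)$ (the absolute value of the sum of the negative exponents of $\alpha$), with base case $\nu=0$ handled by an induction on $n$ over the positive braid monoid $B_n^+$, itself an induction on $\epsilon(\beta)+\epsilon_n(\beta)$; the scissor lemma (Lemma~\ref{psalidi}) reduces the positive step to words $\beta\sigma_j^2$, and the formulas~(\ref{hinv}),~(\ref{invrs}) handle the inverse step on words $\alpha\sigma_i^{-1}$. I expect the main obstacle to be precisely the identity ${\rm tr}_{_S}(ye_j)=-{\rm tr}_{_S}(yg_je_j)$ underlying Cases~7--8: it is the one place where closure under the trace rules is not formal and one genuinely has to grind through the inductive basis, and it is what makes Cases~7--8 truly different from the $E=1$ cases, where Proposition~\ref{E=1} does the analogous bookkeeping cleanly.
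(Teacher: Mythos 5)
Your proposal is correct and follows essentially the same route as the paper: necessity via the evaluations on the identity braid, $\sigma_1^2$, $\sigma_1^3$ and the dismissal of Cases~15--16 through $\sigma_1\sigma_2^2\sigma_1\sigma_2^2$, and sufficiency via the same grouping of Cases~1--14, the isomorphisms $\iota^{\pm}$ with uniqueness of the Ocneanu trace, Proposition~\ref{E=1} for the $E=1$ cases, the identity ${\rm tr}_{_S}(ye_j)=-{\rm tr}_{_S}(yg_je_j)$ for Cases~7--8, and the layered induction on $\nu$, $n$ and $\epsilon+\epsilon_n$ reduced by Lemma~\ref{psalidi}. Nothing essential is missing or different.
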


\subsection{\it Comparing further $P$ and $\Delta_S$}

In Theorem \ref{mainthm} we give a necessary and sufficient condition for the invariants $P$ and $\Delta_S$ to coincide. However, as we mentioned in the introduction, computational data do not indicate that one invariant is topologically stronger than the other. A simple explanation would be that $P$ is a scalar multiple of  $\Delta_S$, that is, there exist $(c_n)_{n \in \mathbb{N}}$ in $\mathbb{C}(q,\zeta,u,z,E)$ such that
\begin{equation}\label{c-eq}
P(\widehat{\a}) = c_{n}\, \Delta_S(\widehat{\a}) \quad{(\a \in B_n)}
\end{equation}
for all $n \in \mathbb{N}$.
Then for the identity braid $1$ in each $B_n$ we have:
$$
P(\,\widehat{1}\,)=D_{\mathcal{H}}^{n-1}=c_n \,D_{\rm Y}^{n-1}=c_n \, \Delta_S(\,\widehat{1}\,).
$$
We deduce that
$$c_n=\frac{D_{\mathcal{H}}^{n-1}}{D_{\rm Y}^{n-1}}=\left(\frac{D_{\mathcal{H}}}{D_{\rm Y}}\right)^{n-1}.$$
Thus, if (\ref{c-eq}) holds, we must have
\begin{equation}
\frac{\tau(\pi(\alpha))}{{\rm tr}_{_S}(\delta(\alpha))}=\left(\frac{\zeta}{z}\right)^{\epsilon(\alpha)}
\end{equation}
for all $\alpha\in B_n$ and for all $n \in \mathbb{N}$.
Taking $\alpha=\sigma_1^{-1} \in B_n$, for $n\geq 2$, we obtain
\begin{equation}
{(u\zeta+z^2-uEz+Ez)} \, q={u\zeta(\zeta+1)},
\end{equation}
which in turn yields (see \S \ref{Some equalities})
$$D_{\mathcal{H}}=D_{\rm Y}.$$
We conclude that
$c_n=1$
for all $n \in \mathbb{N}$. Combining this with Theorem \ref{mainthm},  we obtain the following result:

\begin{thm}\label{notscalar}
Let $X_S$ be a solution of the {\rm E}-system. Let ${\rm tr}_{_S}$ be the corresponding specialized Juyumaya trace on ${\rm Y}_{d,n}(u)$ with parameter $z$, and let
$\tau$ be the Ocneanu trace on $\mathcal{H}_{n}(q)$ with parameter $\zeta$. Let $E={\rm tr}_{_S}(e_i)$ for all $i=1,\ldots,n-1$.
Then there exist $(c_n)_{n \in \mathbb{N}}$ in $\mathbb{C}(q,\zeta,u,z,E)$ such that, for all $n \in \mathbb{N}$,
$$P(\widehat{\a}) = c_{n}\, \Delta_S(\widehat{\a}) \quad{(\a \in B_n)}$$
if and only if
$P=\Delta_S$, that is, if and only if we are in one of the cases portrayed in the  table of Theorem \ref{mainthm}.
\end{thm}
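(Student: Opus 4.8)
The plan is to prove the two implications separately. One direction is immediate: if $P=\Delta_S$, then the family $c_n=1$ $(n\in\mathbb{N})$ does the job. The substance is the converse, and the idea there is that the mere \emph{existence} of a family of scalars $(c_n)$ relating the two invariants is already rigid enough to force $c_n=1$ for every $n$; once that is known, Theorem~\ref{mainthm} identifies precisely when $P=\Delta_S$, and we are done.

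So I would assume $(c_n)_{n\in\mathbb{N}}$ lies in $\mathbb{C}(q,\zeta,u,z,E)$ and satisfies $P(\widehat{\a})=c_n\,\Delta_S(\widehat{\a})$ for all $\a\in B_n$ and all $n$, and then squeeze out information by evaluating on very small braids. First, applying the relation to the identity braid of $B_n$, whose closure is the $n$-component unlink, gives $D_{\mathcal{H}}^{\,n-1}=P(\widehat 1)=c_n\,\Delta_S(\widehat 1)=c_n\,D_{\rm Y}^{\,n-1}$, so
$$c_n=\left(\frac{D_{\mathcal{H}}}{D_{\rm Y}}\right)^{n-1},$$
and in particular $c_n$ is completely determined by the single ratio $D_{\mathcal{H}}/D_{\rm Y}$. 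Feeding this value back into $P=c_n\Delta_S$ and cancelling the common factor $D_{\mathcal{H}}^{\,n-1}$ collapses the hypothesis to
$$\frac{\tau(\pi(\a))}{{\rm tr}_{_S}(\delta(\a))}=\left(\frac{\sqrt{\lambda_\mathrm{Y}}}{\sqrt{\lambda_{\mathcal{H}}}}\right)^{\epsilon(\a)}\qquad(\a\in B_n,\ n\in\mathbb{N}).$$
The second step is to plug a single explicit braid into this. Taking $\a=\s_1^{-1}\in B_n$ $(n\ge2)$ and computing $\tau(G_1^{-1})$ and ${\rm tr}_{_S}(g_1^{-1})$ from the inversion formulas (\ref{hinv}) and (\ref{invrs}) reproduces precisely Equation~(\ref{(1a)}), which by the computation recorded in \S\ref{Some equalities} is equivalent to $D_{\mathcal{H}}=D_{\rm Y}$. (Alternatively, and a little more cheaply: $\widehat{\s_1}$ for $\s_1\in B_2$ is the unknot, on which both $P$ and $\Delta_S$ take the value $1$, forcing $c_2=1$; combined with $c_2=D_{\mathcal{H}}/D_{\rm Y}$ from the previous step this again yields $D_{\mathcal{H}}=D_{\rm Y}$.) Consequently $c_n=(D_{\mathcal{H}}/D_{\rm Y})^{n-1}=1$ for every $n$, i.e.\ $P=\Delta_S$; Theorem~\ref{mainthm} then tells us this happens exactly in Cases~1--14 of its table, which is the assertion.

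The genuine weight of the statement sits in Theorem~\ref{mainthm}, whose long inductive proof in Section~\ref{sameinvs} has already been carried out; the present theorem is essentially a short rider on it. Within the rider itself, the only point requiring a little care is upgrading the conclusion that $P$ is merely \emph{a} scalar multiple of $\Delta_S$ to the conclusion $P=\Delta_S$: this is exactly what the unlink normalization (which pins down $c_n=(D_{\mathcal{H}}/D_{\rm Y})^{n-1}$) together with the single evaluation on $\s_1^{-1}$ (which forces $D_{\mathcal{H}}=D_{\rm Y}$) accomplishes.
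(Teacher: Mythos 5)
Your proposal is correct and follows essentially the same route as the paper: evaluate on the identity braid to pin down $c_n=(D_{\mathcal{H}}/D_{\rm Y})^{n-1}$, then use a single further evaluation (the paper also uses $\a=\s_1^{-1}$, recovering Equation~(\ref{(1a)}) and hence $D_{\mathcal{H}}=D_{\rm Y}$) to force $c_n=1$, and conclude via Theorem~\ref{mainthm}. Your unknot shortcut forcing $c_2=1$ is a harmless simplification of the same step, and your writing of the exponent ratio as $\bigl(\sqrt{\lambda_{\rm Y}}/\sqrt{\lambda_{\mathcal{H}}}\bigr)^{\epsilon(\a)}$ is in fact slightly more precise than the paper's $(\zeta/z)^{\epsilon(\a)}$, with the same conclusion.
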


\end{document}